\documentclass[12pt,reqno]{amsart}
\usepackage[margin=1in]{geometry}
\usepackage{hyperref}

\newcommand{\Ss}{{\mathbb S}}

\newcommand{\ord}{\operatorname{ord}}


\usepackage{mathrsfs, amsmath,amssymb,amsfonts, amsthm, dsfont, tikz-cd}
\newtheorem{theorem}{Theorem}[section]

\newtheorem{proposition}[theorem]{Proposition}
\newtheorem{corollary}[theorem]{Corollary}
\newtheorem{lemma}[theorem]{Lemma}
\newtheorem{remark}[theorem]{Remark}

\numberwithin{equation}{section}

\newcommand{\Z}{{\mathbb Z}}

\newcommand\WF{\operatorname{WF}'}

\numberwithin{equation}{section}

\newcommand{\R}{{\mathbb R}}

\usepackage{epsfig}
\usepackage{amscd}
\usepackage{tikz-cd}
\usepackage{amsmath, amssymb, amsthm}
\usepackage{graphics}
\usepackage{color}
\usepackage{dsfont}
\newtheorem*{main-theorem}{Main Theorem}
\newtheorem*{old-thm}{Theorem}
\theoremstyle{definition}
\numberwithin{equation}{section}
\newcommand\vol{\operatorname{vol}}

\def\11{\mathds{1}}

\def\WF{\mathrm{WF}\,}

\def\supp{\mathrm{supp}\,}

\def\phi{\varphi}

\def\be{\begin{eqnarray*}}
\def\ee{\end{eqnarray*}}
\def\ben{\begin{eqnarray}}
\def\een{\end{eqnarray}}

\def\L2R{L_{\text{Rest}}^2}

\newcommand{\ccal}{\mathcal{C}}

\newcommand{\pcal}{\mathcal{P}}

\newcommand{\codim}{\operatorname{codim}}
\newcommand{\coker}{\operatorname{coker}}
\newcommand{\im}{\operatorname{im}}
\newcommand{\T}{{\mathbb T}}

\begin{document}
\title{Fourier coefficients of restrictions of eigenfunctions}
\author{Emmett L. Wyman}
\address{Department of Mathematics, University of Rochester, Rochester NY, USA}
\email{emmett.wyman@rochester.edu}

\author{Yakun Xi}
\address{School of Mathematical Sciences, Zhejiang University, Hangzhou 310027, PR China}
\email{yakunxi@zju.edu.cn}

\author{Steve Zelditch}
\address{Department of Mathematics, Northwestern University, Chicago IL, USA}
\email{s-zelditch@northwestern.edu}

\maketitle

\begin{abstract} Let $\{e_j\}$ be an orthonormal basis of Laplace eigenfunctions of a compact Riemannian manifold $(M,g)$. Let $H \subset M$ be a submanifold and let $\{\psi_k\}$ be an orthonormal basis of Laplace eigenfunctions of $H$ with the induced metric.
We obtain joint asymptotics for the Fourier coefficients 
\[
	\langle \gamma_H e_j, \psi_k \rangle_{L^2(H)}  = \int_H e_j \overline \psi_k \, dV_H,
\]
of restrictions $\gamma_H e_j$ of $e_j$ to $H$. In particular, we obtain asymptotics for the sums of the  norm-squares of the Fourier coefficients over the joint spectrum  $\{(\mu_k, \lambda_j)\}_{j,k - 0}^{\infty}$ of the (square roots of the)  Laplacian $\Delta_M$ on $M$ and the
Laplacian $\Delta_H$ on $H$  in a family of suitably `thick' regions in $\R^2$. Thick regions include (1) the truncated cone $\mu_k/\lambda_j \in [a,b] \subset (0,1)$ and $\lambda_j \leq \lambda$, and (2) the slowly thickening strip $|\mu_k - c\lambda_j| \leq w(\lambda)$ and $\lambda_j \leq \lambda$, where $w(\lambda)$ is monotonic and $1 \ll w(\lambda) \lesssim \lambda^{1/2}$. Key tools for obtaining these asymptotics include the composition calculus of Fourier integral operators and a new multidimensional Tauberian theorem.

\end{abstract}

\tableofcontents

\section{Introduction}

\subsection{Background}

Let $(M,g) $ be a compact Riemannian manifold without boundary of dimension $n$ and let $\Delta_M$ denote the Laplace--Beltrami operator with respect to $g$. Let  $\{e_j\}_{j=0}^{\infty} $ be an orthonormal basis of eigenfunctions of $-\Delta_M$ with eigenvalues $ \lambda_j^2$ enumerated in increasing order, 
\[
	\Delta_M e_j = -\lambda_j^2 e_j, \qquad \langle e_j, e_k \rangle = \delta_{jk},
\]
where the inner product is $\langle f_1, f_2 \rangle_{L^2(M)}  = \int_M f_1 \overline{f}_2 dV_g$ with $dV_g$ the volume form of $g$. Let $H \subset M$ be an embedded, closed submanifold with dimension $d$, Riemannian metric $g_H$, and Laplacian $\Delta_H$. Let $\{\psi_k\}_{k=0}^{\infty}$
be an orthonormal basis of eigenfunctions of $\Delta_H$, with
\[
    \Delta_H \psi_k = -\mu_k^2 \psi_k.
\]
Denote by
\begin{equation} \label{FEXP}
	\gamma_H e_j = e_j |_H = \sum_k \langle \gamma_H e_j, \psi_k \rangle_{L^2(H)}  \psi_k \end{equation}
the expansion of the restriction of $e_j$ to $H$ in the basis $\psi_k$. We refer to the inner products  
\begin{equation} \label{FCDEF} 
    \int e_j \overline{\psi_k} \, dV_H,
\end{equation}
as the \emph{Fourier coefficients} of $\gamma_H e_j$; here,  $dV_H$ denotes the volume density on $H$. The purpose of this
article is to study the joint asymptotics of the Fourier coefficients \eqref{FCDEF} when the joint spectrum $\{(\lambda_j, \mu_k)\}_{j,k=0}^{\infty}$ falls into a family of suitably `thick' regions.

Ideally, one would like to have sharp bounds on the individual Fourier coefficients \eqref{FCDEF} of a subsequence $\{e_{j_k}\}_{k=1}^{\infty}$  of eigenfunctions with $\frac{\mu_k}{\lambda_j} = c$, as 
  $(M, g, H, c)$ vary over all
compact Riemannian manifolds, submanifolds and eigenvalue ratios. However, except in special cases (Section \ref{EXAMPLES}), it is difficult to extract asymptotic information about a single Fourier coefficient for  a subsequence $\{e_{j_k}\}_{k=1}^{\infty}$ of eigenfunctions.
Since
\[
	\sum_k \left| \int_H e_j \overline \psi_k \, dV_H \right|^2 = \int_H |e_j|^2 \, dV_H,
\]
 giving precise information on individual  Fourier coefficients is a substantial refinement on giving precisely information on the 
$L^2(H)$-restriction of $e_j$, which is itself difficult to describe for individual eigenfunctions (see \cite{BGT} and \cite{Hu} for such estimates).

In this article, we study  the asymptotics of Fourier coefficients  in an average sense. We consider the weak-$*$ limit of the measures
\begin{equation} \label{empirical measure}
	\nu = \lim_{\lambda \to \infty} \lambda^{-n} \sum_{j,k: \lambda_j \leq \lambda} \left| \int e_j \overline \psi_k \, dV_H \right|^2 \delta_{\mu_k/\lambda_j}
\end{equation}
in the dual of the bounded continuous functions on $\R$. We compute the limiting measure by obtaining asymptotics for the sum
\begin{equation}\label{sum}
	\sum_{\substack{\lambda_j \leq \lambda \\ \mu_k/\lambda_j \in [a,b]}} \left| \int e_j \overline \psi_k \, dV_H \right|^2
\end{equation}
in the style of the third author's result in \cite{Zel92}, where $[a,b]$ is a subinterval of $(0,1)$. We also obtain asymptotics for the ladder sum
\begin{equation}\label{ladder sum}
	\sum_{\substack{j,k: \lambda_j \leq \lambda \\ |\mu_k - c\lambda_j| \leq w(\lambda)}} \left| \int e_j \overline \psi_k \, dV_H \right|^2
\end{equation}
where the slope $c$ is fixed and lies in the interval $(0,1)$, and the width $w(\lambda)$ of the strip is some monotonic function which slowly grows to infinity with $\lambda$. To obtain both results, we introduce a basic multidimensional Tauberian theorem---Theorem \ref{main tauberian}---which the first and second authors hope to refine in later work.

The motivation to study Fourier expansions of restricted eigenfunctions    originated in the setting of automorphic eigenfunctions on 
hyperbolic surfaces, in particular the restriction of modular forms to closed geodesics, distance circles, and closed horocycles (in the finite area cusped case). Look to \cite{Ra40, Sel65, Br78, Br81, K80} for estimates and to \cite{Br81, G83, I97, I02} for a more systematic treatment of the topic. The Fourier coefficients in the negatively curved case  are expected to be rather uniform in the interval
$[0, 1]$. This statement is unproved and does not seem to have been formulated precisely before.   By comparison, the Fourier coefficients of the standard basis $Y^m_N$ of spherical harmonics of degree $N$ on the sphere $\Ss^2$ are  highly non-uniform. On a latitude circle,  
only the $m$ Fourier coefficient of $Y_N^m$  is non-zero; its  size depends on the relation between the ratio $\frac{m}{N}$ and the
latitude  (see Section \ref{2Sphere}). These observations  motivate the question of how 
the  dynamics of the geodesic flows of $(M,g)$ and of $(H, g |_H)$ determine the equidistribution properties
of the restricted Fourier coefficients \eqref{FCDEF}. When $\psi_k$ is fixed and only  $e_j$ vary, the Fourier coefficients are sometimes called `periods' of $e_j$ and were first studied under the name of Kuznecov sum formulae \cite{K80} in the general Riemannian context in \cite{Zel92}. While improvements to the remainder bounds of that article have yet to appear, the last 10 years have seen numerous improvements to bounds on the corresponding spectral projection operators (see e.g. \cite{SXZh17, WX, Xi, Xib, CG18, CG18b, CGT17}.) 
Fourier coefficients \eqref{FCDEF}, or `periods' in which $\psi_k$ varies, are sometimes called `generalized periods' (see \cite{Xi} for
 some results in the case of closed hypersurfaces.)

In the sequel \cite{WXZ20}, we obtain asymptotics for refined ladder sums \eqref{ladder sum} when $|\mu_k - c  \lambda_k| \leq w$ for constant width $w$.  It turns out
that the extremals for the individual term occur only when $c=1$.  The case where $c=1$ and $H$ is a totally geodesic submanifold
is studied in   \cite{WXZ20}.  In further work we also plan to study the case $c=1$ and $H$ has non-degenerate second fundamental form, which involves Airy type caustic effects.

\subsection{Statement of Results}

In what follows, $M$ and $H$ will be compact, boundary-less Riemannian manifolds of dimensions $n$ and $d$, respectively, with isometric embedding $H \to M$. We let $\Delta_M$ and $\Delta_H$ be the respective Laplace--Beltrami operators and $e_j$ for $j = 1,2,\ldots$ resp. $\psi_k$ for $k = 1,2,\ldots$ the corresponding orthonormal eigenbases with
\[
	\Delta_M e_j = -\lambda_j^2 e_j \qquad \text{ and } \qquad \Delta_H \psi_k = -\mu_k^2 \psi_k.
\]
Our main theorem is the following asymptotics for the sum \eqref{sum}.

\begin{theorem}\label{main cone} Let $[a,b] \subset (0,1)$. Then,
	\[
		\sum_{\substack{\lambda_j \leq \lambda \\ \mu_k/\lambda_j \in [a,b] }} \left| \int_H e_j \overline\psi_k \, dV_H \right|^2 = \frac{C_{H,M}}{n} \left(\int_{a}^{b} t^{d-1}(1 - t^2)^{\frac{n-d-2}{2}} \, dt \right) \lambda^n + O_{[a,b]}(\lambda^{n-1}),
	\]
	where we have constant
	\[
		C_{H,M} = (2\pi)^{-n}(\vol S^{d-1})(\vol S^{n-d-1})(\vol H).
	\]
	Moreover, the remainder is uniform for $[a,b]$ contained in a compact subset of $(0,1)$.
\end{theorem}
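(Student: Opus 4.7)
\medskip
\noindent\emph{Proof plan.} The strategy has three stages: express the Fourier–coefficient sum as a trace involving $\gamma_H$ together with the functional calculi of $\lap M$ and $\lap H$; obtain smoothed asymptotics for this trace via the Fourier integral operator composition calculus applied to the half-wave groups; and de-smooth via the multidimensional Tauberian theorem \ref{main tauberian}. The key observation, for product test functions, is the operator identity
\[
  \sum_{j,k}\chi_1(\mu_k)\chi_2(\lambda_j)\,|\langle\gamma_He_j,\psi_k\rangle|^2 = \Tr_{L^2(M)}\!\big[\chi_2(\lap M)\,\gamma_H^*\,\chi_1(\lap H)\,\gamma_H\big],
\]
so that the positive joint spectral measure $d\nu := \sum_{j,k}|\langle\gamma_He_j,\psi_k\rangle|^2\,\delta_{(\lambda_j,\mu_k)}$ on $\RR_+^2$ is accessible through traces of compositions of half-wave propagators sandwiched with $\gamma_H$ and $\gamma_H^*$.

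\medskip
\noindent\emph{Smoothed asymptotics.} Take $\rho\in\mathcal S(\RR^2)$ with $\widehat\rho$ compactly supported near $0$, smaller than any geodesic return time of $M$ or $H$ or any loop from $H$ to $H$. Fourier inversion yields
\[
  (d\nu * \rho)(\lambda,\mu) = \frac{1}{(2\pi)^2}\int\widehat\rho(t,s)\,e^{i(t\lambda+s\mu)}\,\Tr\!\big[e^{-it\lap M}\gamma_H^*e^{-is\lap H}\gamma_H\big]\,dt\,ds.
\]
The operator in the trace is an FIO on $M$; its canonical relation composes cleanly provided the covector ratio $|\xi^H|/|\xi|$ (with $\xi^H$ the orthogonal projection of $\xi\in T^*_xM$ onto $T^*_xH$, $x\in H$) stays in a compact subset of $(0,1)$. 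Stationary phase at the diagonal, together with the orthogonal decomposition $\xi = t\,\omega^H + \sqrt{1-t^2}\,\omega^{\perp}$ with $\omega^H\in S^*_xH$ and $\omega^\perp$ a unit conormal, produces the angular density $t^{d-1}(1-t^2)^{(n-d-2)/2}$ as the Jacobian of $dS^{n-1} = t^{d-1}(1-t^2)^{(n-d-2)/2}\,dt\,dS^{d-1}\,dS^{n-d-1}$. Integrating over $x\in H$ contributes $\vol(H)$, and the standard Weyl normalization together with $\vol(S^{d-1})\vol(S^{n-d-1})$ assembles the prefactor $C_{H,M} = (2\pi)^{-n}\vol(H)\vol(S^{d-1})\vol(S^{n-d-1})$.

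\medskip
\noindent\emph{Tauberian de-smoothing.} Apply Theorem \ref{main tauberian} to $d\nu$, restricted to the open cone $\mu/\lambda\in(a,b)$. The smoothed asymptotic from the previous step supplies its hypothesis uniformly as $[a,b]$ varies in a compact subset of $(0,1)$, and its conclusion converts the smoothed principal term of order $\lambda^{n-1}$ into the cumulative
\[
  \int_{\{\lambda_j\le\lambda,\,\mu_k/\lambda_j\in[a,b]\}}\!d\nu = \frac{C_{H,M}}{n}\!\left(\int_a^b t^{d-1}(1-t^2)^{(n-d-2)/2}\,dt\right)\lambda^n + O_{[a,b]}(\lambda^{n-1}),
\]
the factor $1/n$ arising as usual from integrating $\lambda^{n-1}$.

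\medskip
\noindent\emph{Main obstacle.} The technical heart is the smoothed step: the composition $\gamma_H^*e^{-is\lap H}\gamma_He^{-it\lap M}$ fails to be cleanly composable at grazing covectors tangent to $H$, i.e.\ at the ratio $t=1$, where Airy-type caustic behavior would dominate (the regime postponed to \cite{WXZ20}). The assumption $[a,b]\subset(0,1)$ is precisely what keeps the analysis away from this grazing locus and from the trivial locus $t=0$; the uniformity of the $O(\lambda^{n-1})$ error in $[a,b]$ then follows from the uniformity of the principal symbol bounds together with the quantitative form of Theorem \ref{main tauberian}.
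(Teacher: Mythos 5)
Your overall strategy is the same as the paper's: express the smoothed joint spectral measure via a trace of the composed half-wave propagators sandwiched by $\gamma_H$ and $\gamma_H^*$, evaluate the singularity at the origin by the clean-composition FIO symbol calculus, and de-smooth via Theorem \ref{main tauberian}. The paper's bookkeeping looks slightly different (it works with a conormal half-density $\delta_{i\times i}$ on $H^2\times M^2$ paired against a tensored half-wave $U$, and expresses $\widehat N_B|ds\,dt|^{1/2}=U\circ A\circ\delta_{i\times i}$ as a Lagrangian distribution on $\R^2$), but this is equivalent to your trace identity.

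One substantive inaccuracy: your stated constraint on $\supp\widehat\rho$ --- \textquotedblleft smaller than any geodesic return time of $M$ or $H$ or any loop from $H$ to $H$\textquotedblright --- is not the right condition, and as written would typically be vacuous, since arbitrarily short $M$-geodesic chords between nearby points of $H$ always exist. What is actually needed, and what is the technical heart of the paper's Lemma \ref{isolated at origin}, is that after microlocalizing to covectors with ratio $p_H(x,\pi\zeta)/p_M(ix,\zeta)\in\supp\chi\subset(0,1)$, the component of the clean fiber sitting over $(s,t)=(0,0)$ is isolated. This holds because the bi-angle condition $(i\otimes I)G_H^s(x,\pi\zeta)=(I\otimes\pi)G_M^t(ix,\zeta)$ forces the velocity ratio to $1$ as $(s,t)\to 0$ (Lemma \ref{st bounds}), contradicting the cutoff. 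In particular, the admissible neighborhood $\mathcal O$ for $\supp\widehat\rho$ shrinks as $[a,b]$ approaches $\{0,1\}$, which is exactly the source of the stated uniformity in $[a,b]$; a fixed return-time threshold would not explain this dependence. The rest of your outline (the angular Jacobian $t^{d-1}(1-t^2)^{(n-d-2)/2}$, the $\vol(H)\vol(S^{d-1})\vol(S^{n-d-1})$ prefactor, the $1/n$ from integrating the smoothed density over the truncated cone) matches the paper.
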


We acknowledge a minor abuse of notation in the theorem above, namely that the $e_j$ appearing in the integral denotes its pullback from $M$ to $H$ via the embedding $H \to M$. We will repeat this use throughout the article.

As a corollary to this theorem, we obtain a description of the empirical measure $\nu$ in \eqref{empirical measure}.

\begin{corollary} \label{main corollary}
	For any bounded, continuous function $f$ on $\R$, we have
	\[
		\lim_{\lambda \to \infty} \lambda^{-n} \sum_{j,k: \lambda_j \leq \lambda} \left| \int e_j \overline \psi_k \, dV_H \right|^2 f(\mu_k/\lambda_j) = \frac{C_{H,M}}{n} \int_0^1 f(t) t^{d-1} (1 - t^2)^{\frac{n-d-2}{2}} \, dt
	\]
	where the constant $C_{H,M}$ is the same as in Theorem \ref{main cone}. In other words, the limit in \eqref{empirical measure} indeed converges in the weak-$*$ limit in the dual of the bounded continuous functions, and the limit is
	\[
		d\nu(t) = \frac{C_{H,M}}{n} t^{d-1} (1 - t^2)^{\frac{n-d-2}{2}} \chi_{[0,1]}(t) \, dt
	\]
	where $\chi_{[0,1]}$ is the characteristic function of the interval $[0,1]$.
\end{corollary}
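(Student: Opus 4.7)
The plan is to deduce the corollary from Theorem~\ref{main cone} by a Stieltjes--type approximation argument, underpinned by a \emph{total--mass identity} that closes off the noncompact tail in $\mu_k$. Since $\{\psi_k\}$ is an orthonormal basis of $L^2(H)$, Parseval identifies
\[
    \sum_{j,k:\lambda_j\leq\lambda}\left|\int_H e_j\overline{\psi_k}\,dV_H\right|^2 = \int_H \sum_{\lambda_j\leq\lambda}|e_j(x)|^2\,dV_H(x),
\]
and H\"ormander's uniform pointwise Weyl law on $M$ gives the right--hand side as $(2\pi)^{-n}\vol(B^n)\vol(H)\,\lambda^n + O(\lambda^{n-1})$. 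A beta--function substitution $u=t^2$ then verifies
\[
    \frac{C_{H,M}}{n}\int_0^1 t^{d-1}(1-t^2)^{\frac{n-d-2}{2}}\,dt = (2\pi)^{-n}\vol(B^n)\vol(H),
\]
so the proposed limit measure $\nu$ carries precisely the full spectral mass at leading order.

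Next I would combine this identity with Theorem~\ref{main cone} applied to $[\epsilon,1-\epsilon]$ to establish the \emph{tightness bound}
\[
    \sum_{\substack{\lambda_j\leq\lambda\\ \mu_k/\lambda_j\notin[\epsilon,1-\epsilon]}}\left|\int_H e_j\overline{\psi_k}\,dV_H\right|^2 \leq \eta(\epsilon)\,\lambda^n + O(\lambda^{n-1}),
\]
with $\eta(\epsilon)\to 0$ as $\epsilon\to 0^+$. For a general $f\in C_b(\R)$, split the sum in the corollary according to this dichotomy. The exterior piece is bounded by $\|f\|_\infty(\eta(\epsilon)\lambda^n+O(\lambda^{n-1}))$. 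On the compact interval $[\epsilon,1-\epsilon]$ the function $f$ is uniformly continuous; approximate it to within $\delta$ by a step function $f_\delta$ with $N\sim 1/\delta$ jumps, all lying in $(0,1)$, and apply Theorem~\ref{main cone} to each constituent subinterval to obtain the Riemann sum
\[
    \frac{C_{H,M}}{n}\int_\epsilon^{1-\epsilon} f_\delta(t)\,t^{d-1}(1-t^2)^{\frac{n-d-2}{2}}\,dt\cdot \lambda^n + O(\delta^{-1}\lambda^{n-1}).
\]
Replacing $f_\delta$ by $f$ costs a further $\delta\cdot O(\lambda^n)$ by the tightness bound applied inside $[\epsilon,1-\epsilon]$. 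Sending $\lambda\to\infty$, then $\delta\to 0$, then $\epsilon\to 0^+$, and using dominated convergence against the integrable density $t^{d-1}(1-t^2)^{(n-d-2)/2}$ on $[0,1]$, yields the stated formula for $\nu$.

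The main obstacle is the tightness step: Theorem~\ref{main cone} provides no direct information about Fourier pairs with $\mu_k/\lambda_j\geq 1$, and a priori a non--negligible portion of the mass could sit there, spoiling the probability--type weak--$*$ limit. What forces no such leakage at leading order is precisely the numerical coincidence between the Weyl constant $(2\pi)^{-n}\vol(B^n)\vol(H)$ and the beta integral of the candidate density over $[0,1]$; absent this matching, one would need substantially finer spectral information near and above the light cone $\mu_k=\lambda_j$ to conclude.
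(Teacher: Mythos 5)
Your proposal is correct and follows essentially the same route as the paper: Parseval plus the pointwise Weyl law give the total mass $\sum_{\lambda_j\le\lambda}\int_H|e_j|^2\,dV_H$, the numerical identity $\frac{C_{H,M}}{n}\int_0^1 t^{d-1}(1-t^2)^{\frac{n-d-2}{2}}\,dt=(2\pi)^{-n}\vol(B^n)\vol(H)$ shows the candidate density captures all the leading-order mass, subtracting Theorem~\ref{main cone} on $[\epsilon,1-\epsilon]$ yields tightness of the empirical measures, and step-function approximation handles a general $f\in C_b(\R)$. The paper phrases the tightness step as a standalone lemma and implements the passage from compactly supported test functions to bounded continuous ones via a slowly shrinking cutoff $\chi_{\delta(\lambda)}$, but the underlying mechanism --- the coincidence between the Weyl constant and the beta integral of the limit density, which you correctly identify as the crux --- is identical.
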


We also obtain asymptotics for slowly thickening ladder sums as in \eqref{ladder sum}.

\begin{theorem}\label{main ladder}
	Fix $c \in (0,1)$ and let $w(\lambda)$ be a monotone increasing function of $\lambda$ for which $w(\lambda) \to \infty$ and $w(\lambda) = O(\lambda^{1/2})$. Then,
	\[
		\sum_{\substack{j,k : \lambda_j \leq \lambda \\ |\mu_k - c\lambda_j| \leq w(\lambda)}} \left| \int e_j \overline \psi_k \, dV_H \right|^2 = \frac{2C_{H,M}}{n - 1} w(\lambda) c^{d-1} (1 - c^2)^{\frac{n-d-2}{2}} \lambda^{n-1} + O_{c,w}(\lambda^{n-1}).
	\]
\end{theorem}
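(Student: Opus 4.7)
The heuristic guide is Corollary \ref{main corollary}: away from the slopes $0$ and $1$, the weighted joint spectrum $\{(\lambda_j,\mu_k)\}$ distributes (at leading order) according to the smooth density
\[
d\rho(\lambda',\mu) \;=\; C_{H,M}\,(\mu/\lambda')^{d-1}\bigl(1-(\mu/\lambda')^2\bigr)^{(n-d-2)/2}(\lambda')^{n-2}\,d\lambda'\,d\mu,
\]
which is simply the density differentiated from Theorem \ref{main cone}. Integrating $d\rho$ over the strip $\{\lambda'\leq\lambda,\ |\mu-c\lambda'|\leq w(\lambda)\}$, and using $w(\lambda)/\lambda \to 0$ to replace $\rho(\lambda',\mu)$ by its value on the line $\mu=c\lambda'$, yields
\[
2w(\lambda)\,C_{H,M}\,c^{d-1}(1-c^2)^{(n-d-2)/2}\int_0^\lambda (\lambda')^{n-2}\,d\lambda' \;=\; \tfrac{2C_{H,M}}{n-1}\,w(\lambda)\,c^{d-1}(1-c^2)^{(n-d-2)/2}\,\lambda^{n-1},
\]
which matches the claimed leading term. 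The factor $\frac{1}{n-1}$, rather than the $\frac{1}{n}$ native to Theorem \ref{main cone}, arises because the strip is aligned with the line $\mu=c\lambda'$ rather than being a conic sector through the origin, so one integrates $(\lambda')^{n-2}$ along $\lambda'$ instead of recovering $\lambda^n$ from scaling.

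Rigorously, the plan is to apply the multidimensional Tauberian Theorem \ref{main tauberian} to the joint counting function
\[
N(\lambda,\tau) \;=\; \sum_{j,k} \mathbf{1}_{\lambda_j\leq\lambda}\,\mathbf{1}_{\mu_k - c\lambda_j\leq\tau}\,|\langle\gamma_H e_j,\psi_k\rangle_{L^2(H)}|^2,
\]
so that $L_w(\lambda) = N(\lambda,w(\lambda)) - N(\lambda,-w(\lambda))$. The two-variable Fourier transform
\[
\sum_{j,k}|\langle\gamma_H e_j,\psi_k\rangle|^2\,e^{-it\lambda_j-is(\mu_k - c\lambda_j)} \;=\; \operatorname{tr}\bigl(\gamma_H^*\,e^{-is\sqrt{-\Delta_H}}\,\gamma_H\,e^{-i(t-cs)\sqrt{-\Delta_M}}\bigr)
\]
is precisely the FIO trace that drives the proof of Theorem \ref{main cone}: for $(t,s)$ in a small neighborhood of the origin (determined by the injectivity radii of $M$ and $H$), the composition calculus gives an oscillatory-integral expansion whose principal symbol reproduces the density $d\rho$. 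The assumption $w(\lambda)=O(\lambda^{1-1/n})$ is the threshold regime in which the two-dimensional Tauberian remainder $O(\lambda^{n-1})$ remains negligible next to the leading contribution $\sim w(\lambda)\lambda^{n-1}$, while monotonicity of $w$ enters to absorb cross-terms coming from the dependence of the strip width on $\lambda$ itself.

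The main obstacle is the two-dimensional Tauberian step: one must extract sharp two-variable counting from a doubly smoothed FIO trace with an error strictly better than $O(\lambda^{n-1})$, in a regime where the $\tau$-window thickens slowly rather than being fixed, and where no single-variable monotonicity is available in the $\tau$-direction. This is exactly the situation that the authors' new multidimensional Tauberian theorem is designed to handle. Once Theorem \ref{main tauberian} is in place, identifying the leading constant reduces to the explicit density computation of the first paragraph, and the claimed error $O_{c,w}(\lambda^{n-1})$ follows from uniformity of the Tauberian remainder across $\tau \in [-w(\lambda),w(\lambda)]$ together with the uniformity of the principal-symbol expansion on compact subsets of $(0,1)\ni c$.
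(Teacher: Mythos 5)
Your high-level strategy is correct and matches the paper's: apply the multidimensional Tauberian Theorem \ref{main tauberian} to reduce the ladder sum to an integral of the mollified measure $\rho * N$, and then evaluate that integral using the FIO asymptotics of Theorem \ref{main FIO}. The skewed-coordinate trace formula you write down is the right object, and your heuristic density integral reproduces the correct constant. However, there are genuine gaps that would prevent this sketch from becoming a proof.

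First, and most importantly, you never deal with the part of the strip near the origin. The strip $\{0 < \lambda' \leq \lambda, \ |\mu' - c\lambda'| \leq w(\lambda)\}$ meets a full neighborhood of $0$, and for $\lambda'$ small compared to $w(\lambda)$ the slope $\mu'/\lambda'$ is no longer confined to a compact subinterval of $(0,1)$. On that region Theorem \ref{main FIO} gives no asymptotic for $\rho * \tilde N$, and the pseudodifferential cutoff used to define $\tilde N$ also kills the joint spectrum there, so you cannot simply integrate the main term over the whole strip. The paper handles this by splitting off the piece $S_\lambda \cap \{\lambda' \leq C w(\lambda)\}$ and bounding it by the crude estimate
\[
\sum_{\lambda_j \leq C w(\lambda)} \int_H |e_j|^2\, dV_H = O\!\bigl(w(\lambda)^n\bigr)
\]
via Parseval on $H$ and the local Weyl law. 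This is precisely where the hypothesis $w(\lambda) = O(\lambda^{1-1/n})$ is used: it forces $w(\lambda)^n = O(\lambda^{n-1})$. Your explanation of this hypothesis, that it is ``the threshold regime in which the two-dimensional Tauberian remainder $O(\lambda^{n-1})$ remains negligible next to the leading contribution,'' is not correct: the Tauberian remainder is $O(\lambda^{n-1})$ for any $w$, and is always negligible relative to the main term because $w(\lambda) \to \infty$. The growth restriction on $w$ is needed for the near-origin piece, which your proposal does not discuss.

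Second, the step ``identifying the leading constant reduces to the explicit density computation of the first paragraph'' glosses over the real work. After the Tauberian step one has $\int_{\Omega} \rho * \tilde N$, and one must show this equals $2w(\lambda)\int_0^{\lambda} \rho*\tilde N(c\lambda',\lambda')\,d\lambda'$ up to an error of the claimed size. This requires expanding $\rho * \tilde N(c\lambda'+t,\lambda')$ about $t=0$ with controlled derivatives, which in the paper comes from applying Theorem \ref{main FIO} together with the mean value theorem, and then checking that the resulting correction terms (of size $O(w(\lambda)^2\lambda^{n-2})$ and higher) are acceptable. Your proposal names no mechanism for controlling this part of the error. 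Finally, two minor points: the Tauberian Theorem \ref{main tauberian} is stated for a positive Radon measure, so there is no issue of ``no single-variable monotonicity in the $\tau$-direction''; and the monotonicity hypothesis on $w$ is not what ``absorbs cross-terms'' — in fact it plays no visible role in the proof beyond ensuring a well-behaved family of regions.
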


Both Theorems \ref{main cone} and \ref{main ladder} follow as corollaries from the next two theorems. The first is an estimate on a smoothed version of the sum \eqref{sum}, which we prove in Section \ref{FIO} using FIO theory. The second is our basic multidimensional Tauberian theorem, which we prove in Section \ref{TAUBERIAN}.

\begin{theorem} \label{main FIO}
Consider the measure
\[
	N = \sum_{j,k} \left| \int_H e_j \overline \psi_k \, dV_H \right|^2 \delta_{(\mu_k, \lambda_j)}
\]
on $\R^2$. Let $\rho$ be a Schwartz-class function on $\R^2$. Fix $[a,b] \subset (0,1)$. For $(\mu,\lambda)$ in the cone $\mu/\lambda \in [a,b]$ with $\mu,\lambda > 0$, and for $\widehat \rho$ supported in a compact set depending on $H$, $M$, and $[a,b]$, we have asymptotics
\[
	N * \rho(\mu,\lambda) = C_{H,M} \widehat \rho(0) \mu^{d-1} \lambda^{n-d-1} (1 - \mu^2/\lambda^2)^{\frac{n-d-2}{2}} + O_{\rho,[a,b]}(\lambda^{n-3}).
\]
\end{theorem}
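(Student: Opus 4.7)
The plan is to compute the convolution via Fourier inversion and a small-time wave-trace parametrix. Applying Fourier inversion to $\rho$ and exchanging sum and integral yields
\[
    N * \rho(\mu,\lambda) = (2\pi)^{-2} \iint \widehat\rho(t,s)\, e^{i(t\mu + s\lambda)}\, \Theta(t,s)\, dt\, ds,
\]
where
\[
    \Theta(t,s) := \Tr_{L^2(M)}\!\left(\gamma_H^* U_H(t)\, \gamma_H\, U_M(s)\right),\quad U_M(s) = e^{-is\lap{M}},\; U_H(t) = e^{-it\lap{H}}.
\]
The compact support of $\widehat\rho$ will be taken small enough to stay in a $(t,s)$-region on which the Hörmander parametrices for the half-wave propagators on $M$ and on $H$ are both valid and on which no nontrivial closed geodesic of $M$ or $H$ can contribute to $\Theta$.

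The second step is to compute $\Theta(t,s)$ modulo smoothing using these parametrices in Fermi coordinates adapted to $H \hookrightarrow M$. Writing the kernels as
\[
    K_{U_H(t)}(y,y') \sim (2\pi)^{-d}\!\int e^{i(y-y')\cdot\eta - i t|\eta|_{g_H}} a_H\, d\eta,
\]
\[
    K_{U_M(s)}(y',y)|_{H\times H} \sim (2\pi)^{-n}\!\int e^{i(y'-y)\cdot\xi' - i s|\xi|_{g_M}} a_M\, d\xi,
\]
with $\xi = (\xi',\xi'') \in T_y^*H \oplus N_y^*H$, and substituting into $\Theta(t,s) = \iint_{H\times H} K_{U_H(t)}(y,y')\, K_{U_M(s)}(y',y)\, dV_H(y')\, dV_H(y)$, the $y'$-integration produces $(2\pi)^d\delta(\xi'-\eta)$ and the $\eta$-integration leaves
\[
    \Theta(t,s) = (2\pi)^{-n}\!\int_H dV_H(y)\!\int_{T_y^*M} e^{-i t |\xi'|_{g_H} - is |\xi|_{g_M}}\, d\xi + r(t,s),
\]
with $r$ collecting the subprincipal amplitude contributions.

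The final step inserts this back into the convolution. The $(t,s)$-integration becomes an explicit inverse Fourier transform producing $\rho(\mu-|\xi'|, \lambda - |\xi|)$, and polar coordinates $|\xi'| = r$, $|\xi''| = R$ followed by the substitution $r = \mu + u$, $R = \sqrt{(\lambda+v)^2 - (\mu+u)^2}$ (with Jacobian $(\lambda+v)/R$) reduce the leading term to
\[
    \iint \rho(-u,-v)\, (\mu+u)^{d-1}\left((\lambda+v)^2 - (\mu+u)^2\right)^{(n-d-2)/2}(\lambda+v)\, du\, dv.
\]
Since $\rho$ is Schwartz, Taylor-expanding the polynomial factor at $(u,v) = 0$ isolates the leading contribution $\widehat\rho(0)\,\mu^{d-1}\lambda^{n-d-1}(1-\mu^2/\lambda^2)^{(n-d-2)/2}$; the bound $\lambda^2-\mu^2 \gtrsim \lambda^2$ on the cone $\mu/\lambda \in [a,b]$ ensures every remainder is $O(\lambda^{n-3})$, and multiplication by $(2\pi)^{-n}\vol(H)(\vol S^{d-1})(\vol S^{n-d-1}) = C_{H,M}$ yields the claim. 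The main obstacle is the rigorous version of the second step: verifying that $\gamma_H^* U_H(t)\, \gamma_H\, U_M(s)$ is a Fourier integral operator satisfying Hörmander's clean-intersection hypotheses on the chosen small time interval, and controlling its symbol and subprincipal terms well enough that $r(t,s)$ genuinely produces an $O(\lambda^{n-3})$ contribution uniformly for $[a,b]$ in a compact subset of $(0,1)$ after being paired against $\widehat\rho(t,s)\, e^{i(t\mu + s\lambda)}$.
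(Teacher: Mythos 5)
Your proposal is a legitimate alternative route to the same answer: you pass to the Fourier side, write the weighted spectral sum as a two-parameter trace $\Theta(t,s) = \Tr\bigl(\gamma_H^* U_H(t)\,\gamma_H\, U_M(s)\bigr)$, insert H\"ormander parametrices in Fermi coordinates, collapse the $(y',\eta)$-integration, and evaluate the remaining $\xi$-integral by polar coordinates. This is essentially the coordinate version of what the paper does invariantly: the paper instead realizes $\widehat{N}(s,t)$ as $U \circ A \circ \delta_{i\times i}$, where $U$ is the tensored half-wave operator and $\delta_{i\times i}$ is the conormal distribution to the graph of the doubled embedding, and then computes the symbol of the big singularity at the origin via the Duistermaat--Guillemin clean-composition calculus with excess $e = n+d-2$. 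Your final computation of the main term (the polar change of variables, the substitution $r = \mu+u$, $R^2 = (\lambda+v)^2 - (\mu+u)^2$, and the Taylor expansion in $(u,v)$ using $\lambda^2 - \mu^2 \gtrsim \lambda^2$ on the cone) is correct and reproduces the constant $C_{H,M}\,\widehat\rho(0)$.

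However, there is a genuine gap that you flag but do not close, and one inaccurate surrogate for it. You justify shrinking $\supp\widehat\rho$ by asking that ``no nontrivial closed geodesic of $M$ or $H$ can contribute.'' This is not the right condition. The singular support of $\Theta$ away from $(0,0)$ is governed by configurations of two geodesic arcs --- one in $H$ of length $|s|$, one in $M$ of length $|t|$ --- that share endpoints on $H$ and whose $M$-velocity projects to the $H$-velocity; these need not be closed geodesics of either manifold, and they can occur for $(s,t)$ arbitrarily small if the frequency ratio is allowed to approach $1$. What actually isolates the origin is the interaction between the cone constraint $\mu/\lambda \in [a,b]\subset(0,1)$ and the metric comparison $d_H(x,y) \geq d_M(ix,iy) = d_H(x,y)(1 - O(d_H(x,y)))$ (paper's Lemma~\ref{st bounds}): any matched configuration with $(s,t)$ small forces $p_H/p_M \to 1$, which the pseudodifferential cutoff excludes (paper's Lemma~\ref{isolated at origin}). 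Without that step you cannot conclude that pairing against a small-support $\widehat\rho$ only sees the big singularity at the origin; your parametrix for $U_M(s)\big|_{H\times H}$ is still singular at nonzero $s$ whenever nearby points of $H$ are joined by short $M$-geodesics, and those singularities have to be shown not to contribute. Relatedly, you do not insert the microlocal cutoff (the paper's operator $A$) that makes $U_H \otimes U_M$ a Fourier integral operator in the first place --- without it the tensor product picks up wavefront on the zero section of one factor. You correctly acknowledge that verifying the clean-intersection hypotheses is the main obstacle, but that verification (Lemma~\ref{isolated at origin} and Proposition~\ref{clean at origin}, which computes the excess $n+d-2$) is precisely the content that makes the remainder bound $O(\lambda^{n-3})$ provable rather than heuristic, and it is not recoverable by merely shrinking $\supp\widehat\rho$ to avoid periodic orbits.
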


The main term in the asymptotics has a nice geometric interpretation. On the bundle $T^*_HM$ of covectors in $M$ over points in $H$, there is a natural volume density
\[
	\omega \in |T^*M| \otimes |M|^{-1} \otimes |H|,
\]
where here we are using the density notation of Duistermaat and Guillemin \cite{DG75}. Let $p_H$ and $p_M$ denote the principal symbols of $\sqrt{-\Delta_H}$ and $\sqrt{-\Delta_M}$, respectively, and let $i : T_H^*M \to T^*M$ be the inclusion and $\pi : T^*M \to T^*H$ the fiberwise projection. Then, given $\lambda$ and $\mu$, we have the Leray density on $\{p_M \circ i = \lambda\} \cap \{p_H \circ \pi = \mu\}$ given by
\[
	\frac{\omega}{|d(p_M \circ i) \wedge d(p_H \circ \pi)|}.
\]
For the sake of illustration, consider the model situation where $H = \R^d$, $M = \R^n$, where $H \to M$ is the embedding into $\R^d \times 0^{n-d}$, and where we take as symbols $p_H(x,\xi) = |\xi|$ and $p_M(z,\zeta) = |\zeta|$. Then one works out explicitly that $\omega = dx \, d\zeta$ and
\[
	\frac{\omega}{|d(p_M \circ i) \wedge d(p_H \circ \pi)|} = \frac{1}{\sqrt{1 - \mu^2/\lambda^2}} dx \, d\sigma
\]
where $\sigma$ is the restriction of the Euclidean volume to
\[
	\{ \zeta \in \R^n : |\zeta| = \lambda \text{ and } |\pi(\zeta)| = \mu \} = (\mu S^{d-1}) \times (\sqrt{\lambda^2 - \mu^2} S^{n-d-1}).
\]
Pretending momentarily that $\vol H$ is finite, integrating this volume element yields
\[
	(\vol H)(\vol S^{d-1})(\vol S^{n-d-1}) \mu^{d-1}\lambda^{n-d-1} (1 - \mu^2/\lambda^2)^{\frac{n-d-2}{2}}.
\]
This accounts for everything but $\widehat \rho(0)$ and the dimensional power of $2\pi$ in the main term of Theorem \ref{main FIO}. Indeed, this volume element will arise in the computation of the principal symbol of the Lagrangian distribution $\widehat N$ for the big singularity at the origin.

The Fourier Tauberian theorems are tools for obtaining asymptotics for a monotonic increasing function if we are given some information about (1) its derivative, and if we want finer remainders, (2) the order of the singularities of its Fourier transform away from the origin. For background on the one-dimensional Fourier Tauberian theorems, we refer the reader to Levitin's appendix of \cite{Lev97} and \cite{Saf} and the references therein. To state our Tauberian theorem, we borrow the definition of an \emph{order function} from \cite{Zwo}. Specifically, we say $m$ is an order function on $\R^d$ if it is positive and there exist a positive constants $C$ and power $\nu$ for which
\[
	m(x) \leq C(1 + |x - y|)^\nu m(y) \qquad \text{ for all } x,y \in \R^d.
\]

\begin{theorem}[Basic Multidimensional Tauberian Theorem] \label{main tauberian}
Let $N$ be a tempered, positive Radon measure on $\R^d$ and let $\rho$ be a nonnegative Schwartz-class function on $\R^d$ satisfying
\[
	\int_{\R^n} \rho(x) \, dx=1,
\]
and suppose
\[
	N * \rho(x) \leq m(x) \qquad \text{ for all } x \in \R^d
\]
for an order function $m$. Then for any Borel subset $\Omega$ of $\R^d$ with $N(\Omega) < \infty$, we have
\[
	\left|N(\Omega) - \int_\Omega N * \rho(x) \, dx \right| \leq C \int_{\partial^{[-1,1]}\Omega} m(x) \, dx
\]
where here $\partial^{[-1,1]}\Omega$ denotes the unit thickening of the boundary of $\Omega$ and the constant $C$ does not depend on $\Omega$.
\end{theorem}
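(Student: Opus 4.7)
The plan is to start from the Fubini identity
\[
N(\Omega) - \int_\Omega (N*\rho)(x)\,dx = \int \bigl(\chi_\Omega(y) - r(y)\bigr)\,dN(y), \qquad r(y) := \int \chi_\Omega(y+z)\rho(z)\,dz,
\]
which follows from $N*\rho(x) = \int \rho(x-y)\,dN(y)$. Rewriting $\chi_\Omega(y) - r(y) = \int \rho(z)(\chi_\Omega(y) - \chi_\Omega(y+z))\,dz$ and noting that the integrand vanishes unless the segment from $y$ to $y+z$ crosses $\partial \Omega$, I obtain the pointwise estimate
\[
|\chi_\Omega(y) - r(y)| \leq \int \rho(z)\,\chi_{\partial^{[-|z|,|z|]}\Omega}(y)\,dz.
\]

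The key intermediate step is a self-bounding lemma: since $\rho \geq 0$ and $\int \rho = 1$, pick $R > 0$ with $\int_{|z|\leq R}\rho(z)\,dz \geq \frac{1}{2}$; then for every Borel set $A \subset \R^d$,
\[
N(A) \leq 2\int_{A^{[-R,R]}} (N*\rho)(x)\,dx \leq 2\int_{A^{[-R,R]}} m(x)\,dx,
\]
the first inequality holding because $B(y,R) \subset A^{[-R,R]}$ for $y \in A$ forces $\int_{A^{[-R,R]}}\rho(x-y)\,dx \geq \frac{1}{2}$, after which Fubini finishes the job. Applying this with $A = \partial^{[-|z|,|z|]}\Omega$, integrating the pointwise bound against $dN$, and exchanging orders of integration produces
\[
\int |\chi_\Omega - r|\,dN \leq 2 \int m(x) \Bigl( \int_{|z| \geq d(x,\partial\Omega) - R} \rho(z)\,dz\Bigr)\,dx.
\]
The Schwartz decay $\int_{|z|\geq t}\rho(z)\,dz \leq C_K(1+t)^{-K}$, combined with a dyadic decomposition in $d(x, \partial\Omega)$, reduces the problem to summing the quantities $\int_{\partial^{[-T,T]}\Omega} m(x)\,dx$ at scales $T = 2^j R$, $j \geq 0$.

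The main obstacle is then to bound each such large-scale integral by the target $\int_{\partial^{[-1,1]}\Omega} m(x)\,dx$, and this is precisely where the order function hypothesis becomes essential. I would use a Vitali covering argument: apply Vitali's lemma to the family $\{B(x, 1) : x \in \partial\Omega\}$ to extract a disjoint subfamily $\{B(x_i, 1)\}_{i \in I}$ whose inflations $\{B(x_i, c)\}$ cover $\partial^{[-1,1]}\Omega$ for a universal constant $c > 1$, so that $\{B(x_i, T+c)\}$ covers $\partial^{[-T,T]}\Omega$. The order function bound $m(x) \leq C(1+T)^\nu m(x_i)$ on $B(x_i, T+c)$ together with the disjointness of the unit balls then yields
\[
\int_{\partial^{[-T,T]}\Omega} m(x)\,dx \leq C\, T^d (1+T)^\nu \int_{\partial^{[-1,1]}\Omega} m(x)\,dx.
\]
Choosing $K > d + \nu$ in the tail decay of $\rho$ makes the resulting geometric sum over $j$ converge absolutely, so assembling all the pieces yields the desired estimate with a constant depending only on $\rho$ and the order function constants of $m$.
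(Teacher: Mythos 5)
Your proof is correct and follows essentially the same route as the paper: both pass through a pointwise bound $N(B(x,\delta)) \lesssim m(x)$ derived from the positivity of $\rho$ near the origin (your self-bounding lemma is the paper's Lemma~\ref{tauberian lemma 2}), and both collapse $\int_{\partial\Omega^{[-T,T]}} m$ to the unit-thickening integral via a covering/packing argument exploiting the order-function property (your Vitali covering is the dual of the paper's maximal $1$-separated net argument in Lemma~\ref{tauberian lemma 1}). The paper organizes the tail by unit annuli $A(y,k)$ centered at $y$ while you use dyadic shells in $d(x,\partial\Omega)$, but these are cosmetic variants of the same scheme, and the resulting constants and convergence conditions ($K > d + \nu$ in your notation, $K > n + \nu$ in the paper's) match.
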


Presently, there does not seem to be a systematic treatment of Fourier Tauberian theorems with functions of more than one parameter in the literature. The closest result the authors could find is due to Colin de Verdiere in \cite{CdV79}, where he obtains something resembling
\[
	N(\lambda \Omega) = \int_{\lambda \Omega} N * \rho(x) \, dx + O(\lambda^{\nu})
\]
where $\lambda \Omega$ denotes a scaling of $\Omega$ by $\lambda$ about the origin, and where $\Omega$ is compact and has piecewise $C^1$-boundary among some hypotheses on $N$. Note, while this result is set in $\R^d$, the family of regions is necessarily homothetic family indexed by a single real parameter. The main insight of Theorem \ref{main tauberian} is the connection between the remainder and the size of the boundary of $\Omega$. This allows us to obtain asymptotics for the ladders in Theorem \ref{main ladder}.

The paper is organized as follows. In section \ref{EXAMPLES}, we directly verify Theorem \ref{main cone} in the case where $H$ is a coordinate plane in the flat torus. The proofs of Corollary \ref{main corollary}, Theorem \ref{main FIO}, and Theorem \ref{main tauberian} are contained in sections \ref{COROLLARY}, \ref{FIO}, and \ref{TAUBERIAN}, respectively. The proofs of the corollary and the Tauberian theorem use only elementary tools. The proof in section \ref{FIO} relies on the symbol calculus of cleanly composing FIOs as presented in \cite{DG75} (see also \cite{HIV} for a thorough treatment and \cite{D96} for background).

\subsection*{Acknowledgements} Xi was partially supported by National Key R\&D Program of China No. 2022YFA1007200, National Natural Science Foundation of China No. 12171424. Wyman was partially supported by National Science Foundation of USA No. DMS-2204397 and by the AMS Simons travel grants. Zelditch was supported by National Science Foundation of USA Nos. DMS-1810747 and DMS-1502632. The authors are grateful to Madelyne Brown for pointing out an error in in an earlier draft of this paper. The authors are also grateful to the anonymous referees for their thorough and invaluable feedback. 


\section{Examples} \label{EXAMPLES}

In this section, we illustrate the definitions and results with two types of examples: (i) The standard $\Ss^2$, and (ii) flat tori.  

In particular,
we illustrate the nature of the parameter $c = \frac{\mu_k}{\lambda_j}$. 
The sum in \eqref{sum} is over the joint spectral points $(\mu_k, \lambda_j)$ lying in the set $\frac{\mu_k}{\lambda_j} \in [a,b]$. 
On the classical level, where we replace the eignvalues of the  operators by their principal symbols, this set corresponds to the 
 `wedges' or `cone',  
\begin{equation} \label{CWEDGE}
\ccal_{[a,b]} := \{(x, \xi) \in T^* M\setminus 0:  \frac{|\pi_{H_{\phi_0}}(x, \xi)| }{ |\xi|} = c \in [a,b] \}.
\end{equation}  
Below, we relate \eqref{CWEDGE} to wedges (or cones) around rays in the image of the moment map in these two examples. But
for general $(M,g, H)$ without symmetry, the wedge \eqref{CWEDGE} does not have such an interpretation. 

\subsection{Restrictions to curves in  $\Ss^2$}\label{2Sphere} Let $\Ss^2$ be the standard sphere. We illustrate the definitions
in the case where $H$ is a latitude circle (an orbit of the rotational action around the third axis) and for the standard basis $Y^{m}_{\ell}$
of spherical harmonics of degree $\ell$.

  Let $\frac{\partial}{\partial \theta} $ 
generate rotations around the $x_3$ axis in $\R^3$, and let  $(\theta, \phi)$ be the standard spherical coordinates. A latitude circle is a level
set of the azimuthal coordinate $H_{\phi_0} : \{\phi = \phi_0\}$ and the equator is the special case $\phi_0 = \pi/2$.   Since rotations commute with the geodesic flow,  the
Clairaut integral,
$$p_{\theta}(x, \xi) = \langle \xi, \frac{\partial}{\partial \theta} \rangle = |\frac{\partial}{\partial \theta} |_{H_{\phi_0} } \cos \angle \frac{\partial}{\partial \theta},
\dot{\gamma}_{x, \xi}(0), \;\; (x, \xi) \in T_x^*\Ss^2), $$
is a constant of the motion, i.e. the components of the moment map $\pcal: = (|\xi|, p_{\theta}): T^*\Ss^2 \to \R^2$ Poisson commute. 
Let $$u_{\theta}(\theta, \phi)  : =   \left| \frac{\partial}{\partial \theta} \right|^{-1}_{H_{\phi}} \;\frac{\partial}{\partial \theta} $$
be the unit vector field in the direction of $\frac{\partial}{\partial \theta}$ and let $\frac{\partial}{\partial \phi}$ be the unit vector field tangent to the
meridians.  Let $u_{\theta}^*, u_{\phi}^*$ be the dual unit coframe field.
The orthogonal projection from $T_{H_{\phi_0}} \Ss^2 \to T^* H_{\phi_0} $ is given by, $$ \pi_{H_{\phi_0}}(x, \xi)  = \langle \xi, u_{\theta} \rangle u^*_{\theta}.$$
{Thus, if we fix $H_{\phi_0}$ and $c \in [0,1]$, the corresponding slice of the cone $\ccal_{[a,b]}$ is given by: \begin{equation} \label{Tc} T^c_{H_{\phi_0}}: = \Big\{(x, \xi) \in T^*_{H_{\phi_0}} \Ss^2\setminus 0: \frac{|\pi_{H_{\phi_0}}(x, \xi)| }{ |\xi|}= c 
%
 \iff \frac{|p_{\theta}(x, \xi)|}{|\xi|} = c \left| \frac{\partial}{\partial \theta} \right|_{H_{\phi_0}}\Big\}.\end{equation}}
 In particular if $c=1$, then $(x, \xi) \in T^c_{H_{\phi_0}} \iff \xi = |\xi| u_{\theta}^*. $ As \eqref{Tc} shows, the parameter $c$ is
 not the usual ratio $\frac{p_{\theta}(x, \xi)}{|\xi|}$ of components off the moment map, because we choose the operator on $H$ to
 be $\sqrt{\Delta_H}$ rather than $\frac{\partial}{\partial \theta}.$

\subsubsection{\label{SPTH} Spectral theory}
Let $Y_{\ell}^m$ be the standard orthonormal basis of
joint eigenfunctions of $\Delta$ and of the generator $\frac{\partial}{\partial \theta}$ of rotations around the third axis.  Thus, $Y_{\ell}^m$ changes
by the phase $e^{i m \theta}$ under a rotation of angle $\theta$. The orthonormal eigenfunctions of $H_{\phi_0}$ are given by
$\psi_m(\theta) = C_{\phi_0} e^{im \theta}$ where $C_{\phi_0} = \frac{1}{L(H_{\phi_0})}$. Hence, the Fourier coefficients \eqref{FCDEF}
are constant multiplies of the Fourier coefficients relative to $\{e^{i m \theta}\}$. 
It follows that the  $m$th Fourier coefficient of $Y_{\ell}^m$  is its only non-zero Fourier coefficient along any lattitude
circle $H_{\phi_0}$, and that  $|\int_{H_{\phi_0}} Y_{\ell}^m e^{-im \theta} d\theta|^2 = ||Y_{\ell}^m||^2_{H_c}$. This is an example
where one can obtain estimates on individual Fourier coefficients of individual eigenfunctions. The situation is much more complicated
on higher dimensional spheres $\Ss^n$ when $H$ is a latitude `sub-sphere' $\Ss^d$ (see \cite{WXZ20}).

\subsubsection{Ladders and cones} 

Let $ \Lambda_{a} = \pcal^{-1}(a, 1) \subset S^* \Ss^2$ be the level set $\{p_{\theta} = a\}$. It is a Lagrangian torus
when $a \not= \pm 1$ and is the equatorial (phase space) geodesic when $a = \pm 1$.
A ray or ladder in the image of the moment map $\pcal$  is defined by $\{(m, E): \frac{m}{E} = a\} \subset \R^2_+$, and its  inverse
image under $\pcal$ is $\R_+ \Lambda_{a}  \subset T^* \Ss^2$.

 When $H$ is a latitude circle, then the wedge \eqref{CWEDGE}
is  a wedge around a ray in the image of the moment map, since the rays $\{\frac{p_{\theta}(x, \xi)}{|\xi|} = a\}$ and $\{ \frac{|\pi_{H_{\phi_0}}(x, \xi)| }{ |\xi|}= c\}$ are related by the constant $|\frac{\partial}{\partial \theta}|$. 

On the quantum level, a ray corresponds to a  `ladder'   $\{Y_{\ell}^m\}_{\frac{m}{\ell} = a}$ of eigenfunctions. The possible Weyl-Kuznecov sum formulae for latitude circles $H = H_{\phi_0}$ thus depend on the
two parameters $(\phi_0,  \frac{m}{\ell})$. The first corresponds to a latitude circle, the second to a ladder in the joint spectrum. It is better
to parametrize the ladder as $\frac{\mu_m}{\ell} = c $ as discussed above. 

\subsubsection{Caustic sequences and Gaussiam beams} There are special scenarios where the ladder of eigenfunctions
corresponds to  the  Lagrangian  torus $\{p_{\theta} = a |\xi| \} = \Lambda_a$ in $T^*\Ss^2$ and where $H_{\phi_0}$ is the caustic
of this Lagrangian torus, i.e.  a boundary component of its projection. 
 In this case, the  Fourier coefficients of the subsequence of $\{Y^{m}_{\ell}\}$  blow up at the rate $\ell^{1/6}$. This case is outside
 the scope of this article because the corresponding value of $c$ equals $1$. It will be addressed in a later article.
 
 Another extremal scenario is where $a = \pm 1$, i.e. the classical ray occurs on the boundary of the moment map image. The corresponding ladder of eigenfunctions consists of the Gaussian beams,
$ C_0 N^{\frac{1}{4} } (x_1 + i x_2)^N$,  around the equator $\gamma$. In Fermi-normal coordinates, they have the form $N^{(1)/4} e^{i N s} e^{- N y^2/2} $, where $s$ is arc-length along $\gamma$ and $y$ is the normal coordinate. This ladder again corresponds to $c=1$
and is outside the scope of this article; general examples with $c=1$ and $H$  totally geodesic  are described in \cite{WXZ20}.

\subsubsection{$H$ is a closed geodesic of $\Ss^2$ and $c<1$}
Suppose that   $0 < c < 1$ and that  $H$ is a closed geodesic. If $H$ is the equator, then it lies in the interior of the image of
the projection of the torus $\Lambda_a$ and the  unique non-zero
Fourier coefficients of the spherical harmonics $Y^m_{\ell}$ with $\frac{m}{\ell} \simeq c < 1$  uniformly
bounded above.  

On the other hand, one might restrict $Y^m_{\ell}$ to a meridian geodesic, in which case all the Fourier coefficients in the
range $[-\ell, \ell]$ can be non-zero. This is a $c < 1$ case to which our results apply. Note that when $m =0$ the Fourier coefficients
are those of the Legendre function $P_{\ell}(\cos \phi)$.

\subsubsection{Convex surface of revolution in $\R^3$}
All of the above  remarks  generalize to any convex surface of revolution, with  the equator defined as the unique rotationally invariant geodesic. There exist
zonal eigenfunctions and Gaussian beams along equators of general convex surfaces of revolution, so the
orders of magnitude and the eigenfunctions are of the same type.  We refer to \cite{Geis} for a recent study of how the restricted
$L^2$ norms vary with $c$.


\subsection{An Example on the Torus}

Here we verify Theorem \ref{main cone} for an easy example on the torus. This is in part to check the constant in Theorem \ref{main cone} and hence the constant in Theorem \ref{main FIO} with a direct computation. Though we are careful to track all of the dimensional constants in the computations, we find it prudent to verify the result directly.

Let $M = \T^n = \R^n/2\pi \Z^n$ and $H = \T^d$ be embedded in $M$ as the coordinate plane $T^d \times 0 \subset M$. We select the standard bases of exponentials
\[
	e_j(z) = (2\pi)^{-n/2} e^{i\langle z, j \rangle} \qquad \text{ and } \qquad \psi_k(x) = (2\pi)^{-d/2} e^{i\langle x, k \rangle} 
\]
indexed by $j \in \Z^n$ and $k \in \Z^d$, respectively. Note,
\begin{align*}
	\int_H e_j \overline \psi_k \, dV_H &= (2\pi)^{-(n+d)/2} \int_{\T^d} e^{i\langle x, j' - k \rangle} \, dx \\
	&= 
	\begin{cases}
		(2\pi)^{-(n-d)/2} & k + j' = 0 \\
		0 & k - j' \neq 0.
	\end{cases}
\end{align*}
where here $j' = (j_1,\ldots, j_d)$ is the first $d$ coordinates of $j$. The sum in Theorem \ref{main cone} is then
\[
	(2\pi)^{-(n-d)} \#\{ j \in \Z^n : |j| \leq \lambda, \ |j'|/|j| \in [a,b] \}.
\]

\begin{proposition} \label{torus computation}
	Let $M = \T^n$ and let $H = \T^d \times 0$ be a coordinate plane as above. If $[a,b] \subset (0,1)$, the sum in Theorem \ref{main cone} is
	\[
		(2\pi)^{-(n-d)} \frac{(\vol S^{d-1}) (\vol S^{n-d-1})}{n} \left( \int_a^b t^{d-1} (1 - t^2)^{\frac{n-d-2}{2}} \, dt \right) \lambda^n + O(\lambda^{n-1})
	\]
	by direct computation. We recall $\vol H = (2\pi)^d$ and see this agrees with Theorem \ref{main cone}.
\end{proposition}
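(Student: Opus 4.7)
The Fourier-coefficient computation preceding the statement has already reduced the Proposition to the lattice-point count
\[
(2\pi)^{-(n-d)} \#\{j \in \Z^n : |j| \leq \lambda, \ |j'|/|j| \in [a,b]\},
\]
where $j' = (j_1,\ldots,j_d)$. Setting $\Omega_{[a,b]} = \{x \in \R^n : |x| \leq 1, \ |x'|/|x| \in [a,b]\}$, the count is over $\lambda\Omega_{[a,b]} \cap \Z^n$, and the plan splits into (i) computing $\vol(\Omega_{[a,b]})$ to extract the main term $\lambda^n \vol(\Omega_{[a,b]})$, and (ii) bounding the lattice-counting remainder by $O_{[a,b]}(\lambda^{n-1})$.

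For (i), I split $x = (x', x'') \in \R^d \times \R^{n-d}$ and use polar coordinates on each factor: writing $x' = r\omega'$ and $x'' = s\omega''$ with $r,s \geq 0$, $\omega' \in S^{d-1}$, $\omega'' \in S^{n-d-1}$, one has $dx = r^{d-1} s^{n-d-1}\, dr\, ds\, d\omega'\, d\omega''$. Next I substitute $r = \rho t$ and $s = \rho\sqrt{1-t^2}$ with $\rho \geq 0$ and $t \in [0,1]$ (so that $|x| = \rho$ and $|x'|/|x| = t$); a short computation gives Jacobian $\rho/\sqrt{1-t^2}$, and the integrand becomes $\rho^{n-1} t^{d-1}(1-t^2)^{(n-d-2)/2}$. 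Integrating $\rho$ over $[0,\lambda]$, $t$ over $[a,b]$, and the angular variables over $S^{d-1} \times S^{n-d-1}$ gives
\[
\vol(\lambda\Omega_{[a,b]}) = \frac{(\vol S^{d-1})(\vol S^{n-d-1})}{n} \left( \int_a^b t^{d-1}(1-t^2)^{(n-d-2)/2}\, dt \right) \lambda^n,
\]
which is the claimed main term.

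For (ii), $\partial(\lambda\Omega_{[a,b]})$ decomposes into at most three piecewise-smooth pieces---a spherical cap $\{|x|=\lambda\} \cap \lambda\Omega_{[a,b]}$ and two conical annuli on which $|x'|/|x| \in \{a,b\}$---each of $(n-1)$-dimensional measure $O_{[a,b]}(\lambda^{n-1})$. The classical Gauss-style comparison
\[
\bigl|\#(\lambda\Omega_{[a,b]} \cap \Z^n) - \vol(\lambda\Omega_{[a,b]})\bigr| \leq \vol\bigl\{x \in \R^n : \mathrm{dist}(x, \partial(\lambda\Omega_{[a,b]})) \leq \sqrt{n}\bigr\}
\]
then yields the desired $O(\lambda^{n-1})$ remainder, since the right-hand thickening has volume comparable to the surface area. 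No real obstacle is expected; the only bookkeeping to verify is that after pulling out the overall factor $(2\pi)^{-(n-d)}$ and recalling $\vol H = (2\pi)^d$, the constant $C_{H,M}/n$ from Theorem \ref{main cone} reduces to the prefactor $(2\pi)^{-(n-d)}(\vol S^{d-1})(\vol S^{n-d-1})/n$ above, confirming the agreement asserted in the Proposition.
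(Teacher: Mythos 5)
Your proof is correct and follows essentially the same route as the paper: reduce to the lattice-point count, compute the volume of the cone via the same change of variables (you organize it as bipolar coordinates followed by a $2\times 2$ Jacobian $\rho/\sqrt{1-t^2}$, whereas the paper plugs $\Phi(r,t,\omega,\eta)=(rt\omega,r\sqrt{1-t^2}\eta)$ into the pullback metric directly and takes $\sqrt{\det g}$, but these are the same map), and bound the error by a Gauss-type boundary-thickening argument. Your write-up is a bit more explicit about the $O(\lambda^{n-1})$ remainder than the paper's terse "counting cubes," and your constant bookkeeping at the end correctly matches $C_{H,M}/n$ with $\vol H=(2\pi)^d$.
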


\begin{proof}
	By counting cubes, the sum is
    \[
    	(2\pi)^{-(n-d)} |\{ (\xi \in \R^n : |\xi| \leq \lambda, \ |\xi'|/|\xi| \in [a,b] \}| + O(\lambda^{n-1})
    \]
where here the absolute value notation around the set denotes Lebesgue measure in $\R^n$. We parametrize this set by the map
\[
	\Phi(r,t,\omega,\eta) = (rt \omega, r \sqrt{1 - t^2} \eta)
\]
where $r \in [0,\lambda]$, $t \in [a,b]$, $\omega \in S^{d-1}$ and $\eta \in S^{n-d-1}$. The pullback of the Euclidean metric has the form
\[
	g(r,t,\omega,\eta) = \begin{bmatrix}
		1 & 0 & 0 & 0 \\
		0 & \frac{r^2}{1 - t^2} & 0 & 0 \\
		0 & 0 & r^2 t^2 g_{S^{d-1}}(\omega) & 0 \\
		0 & 0 & 0 & r^2(1 - t^2) g_{S^{n-d-1}}(\eta)
	\end{bmatrix},
\]
and hence the pullback of the Euclidean volume density is
\[
	|\det g(r,t,\omega,\eta)|^{1/2} \\= r^{n-1} t^{d-1} (1 - t^2)^{\frac{n-d-2}{2}} |\det g_{S^{d-1}}(\omega)|^{1/2} |\det g_{S^{n-d-1}}(\eta)|^{1/2}.
\]
Integrating yields
\[
	|\{ (\xi \in \R^n : |\xi| \leq \lambda, \ |\xi'|/|\xi| \in [a,b] \}| = \frac{(\vol S^{d-1})(\vol S^{n-d-1})}{n} \lambda^n \int_a^b t^{d-1} (1 - t^2)^{\frac{n-d-2}{2}} \, dt.
\]
The proposition follows.
\end{proof}

A similar computation can be carried out to verify Theorem \ref{main ladder} on the torus, in which we sum the norm-squares of the Fourier coefficients over the thickening strip
\[
	||k| - c|j|| \leq w(\lambda), \qquad |j| \leq \lambda 
\]
with slope $c$ and width $w(\lambda)$. More importantly, this example shows we cannot obtain asymptotics for a sum over a strip of constant width $w$, at least not without adding some more hypotheses. By the reductions before Proposition \ref{torus computation}, this ladder sum is exactly
\[
	(2\pi)^{-(n-d)} \#\{j \in \Z^n : |j| \leq \lambda, \ ||j'| - c|j|| \leq w\}.
\]
The set of $\xi \in \R^n$ with $||\xi'| - c|\xi|| \leq w$ is asymptotic to a $w\sqrt{1 - c^2}$-thickening of the cone $|\xi'| = c|\xi|$ of `slope' $c/\sqrt{1 - c^2}$.

In the case $n = 2$ and $d = 1$, $c$ may be taken so that the slope $c/\sqrt{1 - c^2}$ is rational and $w$ may be taken small enough to only include those lattice points in $\Z^2$ lying along the line of slope $c/\sqrt{1 - c^2}$. Any small, nonzero change in $w$ will not perturb the ladder sum of Theorem \ref{main ladder}, yet would be felt by the main term
\[
	\frac1\pi \frac{w}{\sqrt{1 - c^2}} \lambda.
\]
We conclude the remainder must be just as large as the main term. It is also possible, using a more careful computation, to locate jumps in this ladder sum of order $\lambda$ given a change in $w$ on the order of $\lambda^{-1}$.


\section{Proofs of Theorems \ref{main cone}, \ref{main ladder}, and Corollary \ref{main corollary}} \label{COROLLARY}

\subsection{Proofs of Theorems \ref{main cone} and \ref{main ladder}}

We begin by proving Theorem \ref{main cone}. We take the cone
\[
	\Omega_\lambda = \{(\mu',\lambda') \in \R^2 : \mu',\lambda' > 0, \ \lambda' \leq \lambda,  \text{ and } \mu'/\lambda' \in [a,b] \}.
\]
and realize the sum of Theorem \ref{main cone} is precisely $N(\Omega_\lambda)$ with $N$ from Theorem \ref{main FIO}. Take a smooth cutoff $\chi \in C^\infty(\R,[0,1])$ which takes the value $1$ on a neighborhood of $[a,b]$ and the value $0$ on a neighborhood of the complement of $(0,1)$, and set
\begin{equation}\label{tilde N}
	\tilde N = \sum_{j,k} \left| \int_H e_j \overline \psi_k \, dV_H \right|^2 \chi(\mu_k/\lambda_j) \delta_{(\mu_k,\lambda_j)}.
\end{equation}
Note $N(\Omega_\lambda) = \tilde N(\Omega_\lambda)$, hence we only need to show $\tilde N(\Omega_\lambda)$ satisfies the asymptotics of Theorem \ref{main cone}.

Fix a nonnegative Schwartz-class function $\rho$ with sufficiently small Fourier support and $\widehat \rho(0) = 1$. By construction,
\[
	|(N - \tilde N) * \rho(\mu,\lambda)| = O_{\rho,[a,b]}(\lambda^{-\infty}) \qquad \text{ for } \mu,\lambda > 0 \text{ and } \mu/\lambda \in [a,b].
\]
This and Theorem \ref{main FIO} yields $\tilde N * \rho(\mu,\lambda)$ is bounded by some constant times the order function
\[
	(1 + |(\mu,\lambda)|)^{n-2}.
\]
Theorem \ref{main cone} then follows by Theorem \ref{main tauberian}, the asymptotics in Theorem \ref{main FIO}, and an elementary computation.

Next, we prove Theorem \ref{main ladder}. Fix $[a,b] \subset (0,1)$ such that $a < c < b$ and consider the strip
\[
	S_\lambda = \{(\mu',\lambda') \in \R^2 : 0 < \lambda' \leq \lambda \text{ and } |\mu' - c\lambda'| \leq w(\lambda)\}.
\]
Let $W_r = \{(\mu',\lambda') \in \R^2 : 0 \leq \lambda' \leq r\}$ denote the strip of width $r$ along the first axis, and fix a constant $C$ for which
\[
	S_\lambda \setminus W_{Cw(\lambda)} \subset \{(\mu',\lambda') : \mu'/\lambda' \in [a,b] \}.
\]
We have by the basis property of $\{\psi_k\}$, the local Weyl law, and the hypothesis $w(\lambda) = O(\lambda^{1/2})$,
\[
	N(S_\lambda \cap W_{Cw(\lambda)}) = \sum_{\substack{j,k : \lambda_j \leq Cw(\lambda) \\ |\mu_k - c\lambda_j| \leq w(\lambda)}} \left| \int_H e_j \overline \psi_k \, dV_H \right|^2 \leq \sum_{j : \lambda_j \leq Cw(\lambda)} \int_H |e_j|^2 \, dV_H = O(\lambda^{n-1}).
\]
All that is left is to show $N(S_\lambda \setminus W_{Cw(\lambda)})$ satisfies the asymptotics of Theorem \ref{main ladder}, i.e.
\[
	N(S_\lambda \setminus W_{Cw(\lambda)}) = C_{H,M} c^{d-1} (1 - c^2)^{\frac{n-d-2}{2}} w(\lambda) \lambda^{n - 1} + O(\lambda^{n-1}),
\]
where we allow the constants implicit in the big-$O$ remainder to depend on $c$ and $w$. We replace $N$ with $\tilde N$ as in \eqref{tilde N} and have
\[
	N(S_\lambda \setminus W_{Cw(\lambda)}) = \tilde N(S_\lambda \setminus W_{Cw(\lambda)})
\]
by construction. Let $\rho$ be as before so that $\tilde N * \rho$ is again bounded by the same order function. Since $w(\lambda) \leq \lambda$ for $\lambda$ large
\[
	\int_{\partial^{[-1,1]} (S_\lambda \setminus W_{Cw(\lambda)})} \rho * \tilde N(\mu',\lambda') \, d(\mu',\lambda') = O(\lambda^{n-1}).
\]
By Theorem \ref{main tauberian},
\begin{align*}
	\tilde N(S_\lambda \setminus W_{Cw(\lambda)}) &= \int_{S_\lambda \setminus W_{Cw(\lambda)}} \rho * \tilde N(\mu',\lambda') \, d(\mu', \lambda') + O(\lambda^{n-1}) \\
	&= \int_{Cw(\lambda)}^\lambda \int_{-w(\lambda)}^{w(\lambda)} \rho * \tilde N(c\lambda' + t, \lambda') \, dt \, d\lambda' + O(\lambda^{n-1}).
\end{align*}
By Theorem \ref{main FIO}, the mean value theorem, and our hypotheses on the growth of $w(\lambda)$, we have
\[
	\rho * \tilde N(c \lambda' + t, \lambda') = \rho * \tilde N(c\lambda', \lambda') + O(w(\lambda)\lambda'^{n-3}) \qquad \text{ for all } |t| \leq w(\lambda).
\]
Continuing with our estimates, we have by $w(\lambda) = o(\lambda)$,
\[
	\int_{Cw(\lambda)}^\lambda \int_{-w(\lambda)}^{w(\lambda)} \rho * \tilde N(c\lambda' + t, \lambda') \, dt \, d\lambda' = 2w(\lambda) \int_0^\lambda \rho * \tilde N(c\lambda',\lambda') \, d\lambda' + O(\lambda^{n-1} + w(\lambda)^2 \lambda^{n-2}).
\]
By hypothesis, $O(\lambda^{n-1} + w(\lambda)^2 \lambda^{n-2}) = O(\lambda^{n-1})$, and hence
the right side reads,
\begin{multline*}
	2C_{H,M} w(\lambda) c^{d-1} (1 - c^2)^{\frac{n-d-2}{2}} \int_0^\lambda \lambda'^{n-2} \, d\lambda' + O(\lambda^{n-1}) \\
	= \frac{2C_{H,M}}{n - 1} w(\lambda) c^{d-1} (1 - c^2)^{\frac{n-d-2}{2}} \lambda^{n-1} + O(\lambda^{n-1})
\end{multline*}
by Theorem \ref{main FIO}. This concludes the proof of Theorem \ref{main ladder}.

\subsection{Proof of Corollary \ref{main corollary}}

For each $j$, set
\[
	\nu_j = \sum_k \left| \int_H e_j \overline \psi_k \, dV_H \right|^2 \delta_{\mu_k/\lambda_j}
\]
so that we write
\[
	\nu = \lim_{\lambda \to \infty} \lambda^{-n} \sum_{\lambda_j \leq \lambda} \nu_j.
\]
Let $f$ be a continuous function with support contained in $(0,1)$. Approximating $f$ above and below by step functions and applying Theorem \ref{main cone} on each constant component, we obtain
\begin{equation*}\label{main cone corollary 1}
	\lim_{\lambda \to \infty} \frac{n}{C_{H,M}\lambda^n} \sum_{\lambda_j \leq \lambda} \int f \, d\nu_j = \int_0^1 f(t) t^{d-1} (1 - t^2)^{\frac{n-d-2}{2}} \, dt
\end{equation*}
We now argue that this limit holds for bounded continuous functions on all of $\R$.

For $\delta > 0$, let $\chi_\delta$ be a continuous cutoff function which takes values in $[0,1]$, which is supported in $(0,1)$, and with $\chi_\delta = 1$ on $[\delta,1-\delta]$. For any bounded continuous function $f$ on $\R$, we have
\[
	\frac{n}{C_{H,M}\lambda^n} \sum_{\lambda_j \leq \lambda} \int f \chi_\delta \, d\nu_j = \int_0^1 f(t) \chi_\delta(t) t^{d-1} (1 - t^2)^\frac{n-d-2}{2} \, dt + o_\delta(1).
\]
We select $\delta(\lambda)$ decreasing to $0$ as $\lambda \to \infty$ slowly enough so that
\[
	\lim_{\lambda \to \infty} \frac{n}{C_{H,M}\lambda^n} \sum_{\lambda_j \leq \lambda} \int f \chi_{\delta(\lambda)} \, d\nu_j = \int_0^1 f(t) t^{d-1} (1 - t^2)^\frac{n-d-2}{2} \, dt.
\]
We now show the limit of the discrepancy
\[
	\lim_{\lambda \to \infty} \frac{n}{C_{H,M}\lambda^n} \sum_{\lambda_j \leq \lambda} \int f (1 - \chi_{\delta(\lambda)}) \, d\nu_j
\]
vanishes, after perhaps taking $\delta(\lambda) \to 0$ more slowly. By the triangle inequality, we have
\[
	\left| \int f ( 1 - \chi_{\delta(\lambda)}) \, d\nu_j \right| \leq \| f\|_{L^\infty(\R)} \int (1 - \chi_{[\delta(\lambda), 1 - \delta(\lambda)]}) \, d\nu_j
\]
where here $\chi_{[\delta(\lambda), 1 - \delta(\lambda)]}$ denotes the characteristic function of the interval $[\delta(\lambda), 1 - \delta(\lambda)]$. The following lemma concludes the proof of Corollary \ref{main corollary}.

\begin{lemma}
	Fix $\delta > 0$ and let $\chi_{[\delta,1-\delta]}$ denote the characteristic function of the interval $[\delta,1-\delta]$. Then,
	\[
		\frac{n}{C_{H,M}\lambda^n} \sum_{\lambda_j \leq \lambda} \int (1 - \chi_{[\delta,1-\delta]}) \, d\nu_j \lesssim \delta^{1/2} + C_\delta \lambda^{-1},
	\]
	where the constant implicit in the $\lesssim$ notation depends only on $H$ and $M$, and where $C_\delta$ only depends on $H$, $M$, and $\delta$.
\end{lemma}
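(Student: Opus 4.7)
\emph{Proof plan.} The strategy is to compute the tail sum by subtracting the ``middle'' (which we know precisely from Theorem \ref{main cone}) from the ``total'' (which we know from the integrated local Weyl law), and then to bound the resulting integrals on $[0,\delta]\cup[1-\delta,1]$ by hand.

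First I would use Parseval's identity to write
\[
	\sum_{\lambda_j \leq \lambda} \int d\nu_j = \sum_{\lambda_j \leq \lambda} \sum_k \left| \int_H e_j \overline\psi_k \, dV_H \right|^2 = \sum_{\lambda_j \leq \lambda} \int_H |e_j|^2 \, dV_H,
\]
and then invoke Hörmander's pointwise local Weyl law $\sum_{\lambda_j \leq \lambda} |e_j(x)|^2 = (2\pi)^{-n}(\vol B^n)\lambda^n + O(\lambda^{n-1})$, integrated over $H$, to obtain
\[
	\sum_{\lambda_j \leq \lambda} \int d\nu_j = (2\pi)^{-n} (\vol B^n)(\vol H) \lambda^n + O(\lambda^{n-1}).
\]
Next I would apply Theorem \ref{main cone} with $[a,b] = [\delta, 1-\delta]$ to evaluate the ``middle'' contribution $\sum_{\lambda_j \leq \lambda} \int \chi_{[\delta,1-\delta]} \, d\nu_j$, and subtract from the total. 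The key algebraic observation is the beta-function identity
\[
	\frac{C_{H,M}}{n} \int_0^1 t^{d-1}(1-t^2)^{\frac{n-d-2}{2}} \, dt = (2\pi)^{-n}(\vol B^n)(\vol H),
\]
which follows from $\int_0^1 t^{d-1}(1-t^2)^{(n-d-2)/2} dt = \tfrac{1}{2}B(d/2,(n-d)/2)$ together with $\vol S^{k-1} = 2\pi^{k/2}/\Gamma(k/2)$ and $\vol B^n = (\vol S^{n-1})/n$. Consequently the tail is
\[
	\sum_{\lambda_j \leq \lambda} \int (1 - \chi_{[\delta,1-\delta]}) \, d\nu_j = \frac{C_{H,M}}{n} \left( \int_0^\delta + \int_{1-\delta}^1 \right) t^{d-1}(1-t^2)^{\frac{n-d-2}{2}} \, dt \cdot \lambda^n + O_\delta(\lambda^{n-1}).
\]

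It remains to estimate the two tail integrals. For the integral near $0$, since $(1-t^2)^{(n-d-2)/2}$ is bounded by a constant when $n \geq d+2$ and by $(1-\delta^2)^{-1/2} \lesssim 1$ on $[0,\delta]$ when $n=d+1$, one has
\[
	\int_0^\delta t^{d-1}(1-t^2)^{\frac{n-d-2}{2}} \, dt \lesssim \delta^d \leq \delta.
\]
For the integral near $1$, the substitution $u = 1-t$ gives $1-t^2 \in [u, 2u]$, whence
\[
	\int_{1-\delta}^1 t^{d-1}(1-t^2)^{\frac{n-d-2}{2}} \, dt \lesssim \int_0^\delta u^{\frac{n-d-2}{2}} \, du \lesssim \delta^{(n-d)/2},
\]
which is $\lesssim \delta^{1/2}$ in the worst case $n = d+1$ and better otherwise. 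Dividing by $(C_{H,M}/n)\lambda^n$ yields the claimed bound $\delta^{1/2} + C_\delta \lambda^{-1}$.

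The only subtle point is verifying the beta-function identity, since it is what guarantees that the ``leaked mass'' really equals the tail integrals rather than leaving behind an unmatched constant term; once that identity is in hand the argument reduces to the elementary integral estimates above. The bound $\delta^{1/2}$ (rather than a better power) is sharp exactly in the hypersurface case $n=d+1$, where the integrable singularity of $(1-t^2)^{-1/2}$ at $t=1$ forces this rate.
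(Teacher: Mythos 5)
Your proposal is correct and follows essentially the same route as the paper: Parseval plus the pointwise local Weyl law for the total mass, Theorem~\ref{main cone} for the contribution of $[\delta,1-\delta]$, the Beta-function identity to confirm the constants cancel, and then direct estimation of the two tail integrals (where the $\delta^{1/2}$ rate comes from the hypersurface case $n=d+1$, matching the paper's remark that the conclusion ``follows by $1\leq d\leq n-1$ and an elementary estimate''). The only nitpick is the aside that $(1-\delta^2)^{-1/2}\lesssim 1$ on $[0,\delta]$: this requires $\delta$ bounded away from $1$, which is harmless since for $\delta$ of order $1$ the bound $\delta^{1/2}$ is already $O(1)$ and the claim is trivial, but it is worth saying so explicitly.
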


\begin{proof}
	Since the $\psi_k$'s form an orthonormal basis for $L^2(H)$,
	\[
		\int 1 \, d\nu_j = \sum_k \left| \int_H e_j \overline \psi_k \, dV_H \right|^2 = \int_H |e_j|^2 \, dV_H.
	\]
	Hence by the pointwise Weyl law \cite[Theorem 29.1.4]{HIV},
	\[
		\sum_{\lambda_j \leq \lambda} \int 1 \, d\nu_j = (2\pi)^{-n} \frac{(\vol H) (\vol S^{n-1})}{n} \lambda^n + O(\lambda^{n-1}),
	\]
	and so
	\begin{align*}
		\frac{n}{C_{H,M}\lambda^n} \sum_{\lambda_j \leq \lambda} \int 1 \, d\nu_j &= \frac{\vol S^{n-1}}{(\vol S^{d-1}) (\vol S^{n-d-1})} + O(\lambda^{-1}) \\
		&= \int_0^1 t^{d-1} (1 - t^2)^\frac{n-d-2}{2} \, dt + O(\lambda^{-1}).
	\end{align*}
	where the second line follows from a similar computation as in the proof of Proposition \ref{torus computation}. Theorem \ref{main cone} yields
	\begin{multline*}
		\frac{n}{C_{H,M}\lambda^n} \sum_{\lambda_j \leq \lambda} \int (1 - \chi_{[\delta,1-\delta]}) \, d\nu_j\\
		= \int_0^\delta t^{d-1}(1 - t^2)^\frac{n-d-2}{2} \, dt + \int_{1 - \delta}^1 t^{d-1}(1 - t^2)^\frac{n-d-2}{2} \, dt + O_\delta(\lambda^{-1}).
	\end{multline*}
	The lemma follows by $1 \leq d \leq n-1$ and an elementary estimate.
\end{proof}


\section{Proof of Theorem \ref{main FIO}} \label{FIO}

\subsection{The Setup}

Set $P_M = \sqrt{-\Delta_M}$ and $P_H = \sqrt{-\Delta_H}$. These are first order, self-adjoint, elliptic pseudodifferential operators on their respective manifolds with principal symbols
\[
	p_M(z,\zeta) = \left( \sum_{i,j} g_M^{ij}(z) \zeta_i \zeta_j \right)^{1/2} \quad \text{ and } \quad p_H(x,\xi) = \left( \sum_{i,j} g_H^{ij}(x) \xi_i \xi_j \right)^{1/2}.
\]
Here, $g_M$ and $g_H$ are the Riemannian metric tensors on $M$ and $H$, respectively.

Now,
\[
	N = \sum_{j,k} \left| \int_H e_j \overline \psi_k \, dV_H \right|^2 \delta_{(\mu_k,\lambda_j)},
\]
which is a joint spectral measure of the operators $P_H \otimes I$ and $I \otimes P_M$ on $H \times M$ weighted by the norm-squared Fourier coefficients. We will want to rewrite these weights using half-densities so that we can use the theory of FIOs. We change notation so that $\psi_k$ and $e_j$ are instead the eigendensities
\[
	\psi_k = \tilde \psi_k |dV_H|^{1/2} \qquad \text{ and } \qquad e_j = \tilde e_j |dV_M|^{1/2}
\]
where $\tilde \psi_k$ and $\tilde e_j$ are now the corresponding eigenfunctions. Let $i : H \to M$ be the embedding and let $\delta_i$ be the half-density distribution in $H \times M$ for which
\begin{equation}\label{def delta_i}
	(\delta_i, f) = \int_H \tilde f(x,i(x)) \, dV_H
\end{equation}
for smooth test half-densities $f = \tilde f |dV_H \, dV_M|^{1/2}$ on $H \times M$. Then, we write the Fourier coefficient as
\[
	\int_H \tilde e_j \overline{\tilde \psi}_k \, dV_H = (\delta_i, \overline \psi_k \otimes e_j).
\]

For technical reasons, we will need to insert a pseudodifferential cutoff. Thankfully, we can do this at minimal cost.

{ 
\begin{lemma} \label{pseudodifferential lemma} Let $\chi$ be a smooth function on $\R$ taking values in $[0,1]$ for which $\chi = 1$ on a neighborhood of $[a,b]$ and $\supp \chi \Subset (0,1)$. The operator $B$ acting on distributions over $H \times M$ defined spectrally by
 \[
	B(\overline \psi_k \otimes e_j) = \chi(\mu_k/\lambda_j) \overline\psi_k \otimes e_j\footnote{We take the convention that $	B(\overline \psi_k \otimes e_j)=0$ whenever $\lambda_j=0$.}
\]
is a real, self-adjoint, $0$th order pseudodifferential operator with principal symbol
\[
	\chi(p_H(x,\xi)/p_M(z,\zeta)) \qquad \text{ for } (x,\xi,z,\zeta) \in T^*(H \times M)
\]
and with essential support in
\[
	\{(x,\xi,z,\zeta) : p_H(x,\xi)/p_M(z,\zeta) \in \supp \chi \}.
\]
\end{lemma}

We defer the proof of the lemma until after our reduction, but not without a few words first. The operator $B$ is equal to $\chi(\frac{P_H}{P_M})$. Here, $\frac{P_H}{P_M}$ is not a pseudo-differential operator but $\chi(\frac{P_H}{P_M})$ is a zeroth order pseudo-differential operator due to the properties of the cutoff.

To put this statement into context, we recall  the notion of  polyhomogeneous symbols $S^m(\Gamma)$  relative to a choice of conic open subset $\Gamma \subset T^*M - \{0\}$; see \cite{HIV}, Volume 3, page 83. Namely, the standard symbol estimates of \cite[(28.1.1)']{HIV} are only assumed to be valid in $\Gamma$.  This notion is developed more systematically on manifolds in \cite[Page 86]{HFIO}.

Formally, the principal symbol of the quotient operator $\frac{P_H}{P_M}$ is $\frac{p_H}{p_M}$. This is not a symbol on $T^*(M \times H) -\{0\}$, and indeed it is not even defined on the sub-cone $0_M \times T^* H - \{0\}$. However, it is a  symbol on the cone $\Gamma \subset T^*(M \times H) \backslash 0$ where $p_H(x,\xi)/p_M(z,\zeta)\in \supp \chi$. Note that if $\rm{Supp} \chi = [a, b]$ then $p_H(x,\xi)/p_M(z,\zeta)\in \supp \chi$ if and only if $a p_M(z, \zeta) \leq p_H(x,\xi) \leq b p_M(z,\zeta) $. We may further assume that $p_H(x, \xi) + p_M(z, \eta) \geq \delta > 0$ for some $\delta > 0$, since the cutoff of $\chi(\frac{P_H}{P_M})$ to the complement is a smoothing operator. Then at least one of $p_H(x, \xi),p_M(z, \eta)$ is $\geq \delta/2$ and so both symbols are uniformly bounded above zero. Then the symbol $q(y,\eta, x, \xi) = \chi(p_H(x,\xi)/p_M(z,\zeta))$ is homogeneous of degree $0$ and elliptic in $\Gamma. $  Moreover, it is a smoothing operator on the complement of $\Gamma$.
}

We set
\begin{align*}
	N_B &= \sum_{j,k} \chi^2(\mu_k/\lambda_j) |(\delta_i, \overline \psi_k \otimes e_j)|^2 \delta_{(\mu_k,\lambda_j)} \\
	&= \sum_{j,k} |(\delta_i, B(\overline \psi_k \otimes e_j))|^2 \delta_{(\mu_k,\lambda_j)}.
\end{align*}
Similarly as in the proof of Theorem \ref{main cone},
\[
	|\rho * N(\mu,\lambda) - \rho * N_B(\mu,\lambda)| = O(\lambda^{-\infty}) \qquad \text{ for $\mu/\lambda \in [a,b]$,}
\]
so we may freely exchange $N$ for $N_B$ in the statement of the theorem.

Similar to \eqref{def delta_i}, if we denote $\delta_{i \times i}$ the half-density on $H\times H\times M\times M$ by
 \begin{equation}\label{def delta{ixi}}
 	(\delta_{i \times i}, f) = \int_H\int_H \tilde f(x,y,i(x),i(y)) \, dV_H(x)\,dV_H(y),
 \end{equation}
for smooth test half-densities $f = \tilde f |dV_H \, dV_H \, dV_M \, dV_M|^{1/2}$ on $H\times H\times M\times M$. 
Then we have
\begin{align*}
	|(\delta_i, B(\overline \psi_k \otimes e_j))|^2 &= \chi(\mu_k/\lambda_j)^2 |(\delta_i, \overline \psi_k \otimes e_j)|^2  \\
	&= \chi(\mu_k/\lambda_j)^2 (\delta_i \otimes \delta_i, \overline \psi_k \otimes e_j \otimes \overline{\overline \psi_k \otimes e_j}) \\
	&= ( \delta_{i \times i}, A(\overline \psi_k \otimes \psi_k \otimes e_j \otimes \overline e_j)) \\
	&= ( A\delta_{i \times i}, \overline \psi_k \otimes \psi_k \otimes e_j \otimes \overline e_j)
\end{align*}
where $A$ is a pseudodifferential operator on $H^2 \times M^2$ defined spectrally by
\begin{equation}\label{def A}
	A(\overline \psi_k \otimes \psi_{k'} \otimes e_j \otimes \overline e_{j'}) = \chi(\mu_k/\lambda_j) \chi(\mu_{k'}/\lambda_{j'}) \beta(\mu_k/\mu_{k'}) \beta(\lambda_j/\lambda_{j'})
\end{equation}
where $\beta \in C^\infty(\R, [0,1])$ is identically $1$ on a neighborhood of $1$ and has support in $(1/2,2)$. Similar to $B$, $A$ is a real, $0$th order, self-adjoint pseudodifferential operator. The $\beta$ cutoffs are there to ensure the symbol of $A$ is smooth near the axes. { Note, the third equality above holds here since $j' = j$ and $k' = k$.}

Using this and a Fourier transform, we have
\begin{align*}
	\widehat N_B(s,t) &= \sum_{j,k} ( A \delta_{i \times i}, \overline \psi_k \otimes \psi_k \otimes e_j \otimes \overline e_j) e^{-i(s\mu_k + t \lambda_j)} \\
	&= (A \delta_{i \times i}, \overline{e^{isP_H}} \otimes e^{-itP_M}),
\end{align*}
interpreted in a distributional sense. We let
\[
	U(s,t,x,y,z,w) = \overline{e^{isP_H}(x,y)} e^{-itP_M}(z,w)
\]
be the half-density distribution kernel of the tensored half-wave operators, and by an abuse of notation, we let $U$ denote the operator with the kernel above taking smooth half-densities on $H^2 \times M^2$ to half-density distributions on $\R^2$. Then, we have
\[
	\widehat N_B |ds \, dt|^{1/2} = U \circ A \circ \delta_{i \times i}.
\]

{
\begin{proof}[Proof of Lemma \ref{pseudodifferential lemma}] That $B$ is real and self-adjoint is clear from its definition. Furthermore, we may remove the complex conjugate over $\psi_k$ and write
\[
	B(\psi_k \otimes e_j) = \chi(\mu_k/\lambda_j) \psi_k \otimes e_j.
\]
This will slightly simplify the calculations to come. We must verify that it is a pseudodifferential operator with the indicated symbol and essential support. To this end, we write $B$ locally up to lower order terms. In what follows, we will write
\[
	b(\sigma, \tau) = \chi(\sigma/\tau),
\]
where here $b$ is positive-homogeneous of order $0$ and smooth on $\R^2 \setminus 0$ since $\chi$ is smooth and has compact support in the interval $(0,\infty)$. Note, we may declare $b(0,0) = 0$ and regularize $b$ near the origin at the cost of a smooth error.

We first note that
\[
	e^{is P_H \otimes it P_M} = e^{is P_H} \otimes e^{itP_M},
\]
since their evaluations on joint eigenfunctions $\psi_k \otimes e_j$ agree. By Fourier inversion we write
\[
	B = b(P_H, P_M) = (2\pi)^{-2} \iint_{\R^2} \widehat b(s,t) e^{is P_H} \otimes e^{it P_M} \, ds \, dt
\]
Let $\rho$ be a Schwartz-class function on $\R^2$ such that $\rho \equiv 1$ near the origin and $\rho \equiv 0$ outside of a neighborhood of the origin. Then, we cut the integral into $\rho(s,t)$ and $1 - \rho(s,t)$ parts. Note since $b$ is in class $S^0(\R^2)$, its Fourier transform has singular support at $0$ (see the proof of Theorem 4.3.1 in \cite{Sog17}). Hence, $\widehat b(s,t) (1 - \rho(s,t))$ is Schwartz-class, and hence an integration by parts reveals
\begin{multline*}
	(2\pi)^{-2} \iint_{\R^2} \widehat b(s,t) (1 - \rho(s,t)) e^{is P_H} \otimes e^{it P_M}(\psi_k \otimes e_j) \, ds \, dt \\
	= (2\pi)^{-2} \iint_{\R^2} \widehat b(s,t) (1 - \rho(s,t)) e^{is \mu_k} \otimes e^{it \lambda_j}(\psi_k \otimes e_j) \, ds \, dt = O(|(\mu_k, \lambda_j)|^{-\infty}).
\end{multline*}
It suffices now to show that
\begin{equation} \label{pseudodifferential lemma 1}
	(2\pi)^{-2} \iint_{\R^2} \widehat b(s,t) \rho(s,t) e^{is P_H} \otimes e^{it P_M} \, ds \, dt
\end{equation}
is the desired pseudodifferential operator.

Next, as in \cite{Sog17}, we use H\"ormander's small time parametrix for the half-wave operator to obtain
\begin{align*}
	&e^{isP_H} \otimes e^{is P_M}(x,z,y,w) \\
	&= e^{isP_H}(x,y) e^{itP_M}(z,w) \\
	&= (2\pi)^{-n-d} \int_{\R^n} \int_{\R^d} e^{i (s p_H(y,\xi) + \varphi_H(x,y,\xi) + t p_M(w,\zeta) + \varphi_M(z,w,\zeta))} q_H(s,x,y,\xi) q_M(t,z,w,\zeta) \, d\xi \, d\zeta
\end{align*}
modulo a smooth kernel, where
\[
	\varphi_H(x,y,\xi) = \langle x - y , \xi \rangle + O(|x - y|^2|\xi|) \quad \text{ and } \quad \varphi_M(z,w,\zeta) = \langle z - w, \zeta \rangle + O(|z - w|^2 |\zeta|)
\]
and $q_H$ and $q_M$ are zeroth-order symbols. Next, we examine the contribution of the integrals in $s$ and $t$ to \eqref{pseudodifferential lemma 1}, namely
\begin{multline*}
	\iint_{\R^2} e^{i(sp_H(y,\xi) + tp_M(w,\zeta))} \widehat b(s,t) \rho(s,t) q_H(s,x,y,\xi) q_M(t,z,w,\zeta) \, ds \, dt \\
	= \iiiint e^{i(s(p_H(y,\xi) - \sigma) + t(p_M(w,\zeta) - \tau))} b(\sigma, \tau) \rho(s,t) q_H(s,x,y,\xi) q_M(t,z,w,\zeta) \, ds \, dt \, d\sigma \, d\tau
\end{multline*}
We claim that (1) the integral above is a symbol $a(x,y,z,w,\xi,\zeta)$ in class $S^0$, and (2) it has rapid decay outside of $\supp b$, and (3) it has principal term $b(p_H(y,\xi), p_M(w,\zeta))$. We are done after verifying these three claims.

We start, in fact, with (2). We let $\beta$ be homogeneous of order $0$ on $\R^2$ which takes $\beta \equiv 1$ on $\supp b$ and $\beta \equiv 0$ near the axes of $\R^2$. We then cut the integral above by $\beta(p_H, p_M)$ and $1 - \beta(p_H, p_M)$ parts. A standard integration by parts argument shows the latter part is in $S^{-\infty}$. Hence, we consider the former part,
\[
	\iiiint e^{i(s(p_H - \sigma) + t(p_M - \tau))} b(\sigma, \tau) \beta(p_H, p_M) \rho(s,t) q_H(s,x,y,\xi) q_M(t,z,w,\zeta) \, ds \, dt \, d\sigma \, d\tau,
\]
which satisfies (2) trivially. 

Next, we establish (1). We perform a change of variables and write the integral above as
\[
	p_M^2 \iiiint e^{ip_M ( s(p_H/p_M - \sigma) + t(1 - \tau))} b(\sigma, \tau) \beta(p_H/p_M, 1) \rho(s,t) q_H(s,x,y,\xi) q_M(t,z,w,\zeta) \, ds \, dt \, d\sigma \, d\tau.
\]
Now the phase function vanishes at its critical point $(s,t,\sigma, \tau) = (0,0,p_H/p_M, 1)$, at which it is nondegenerate. By a routine stationary phase argument, the result is a symbol with principal term
\begin{multline*}
	(2\pi)^2 b(p_H/p_M, 1) \beta(p_H/p_M, 1) \rho(0,0) q_H(0, x, y, \xi) q_M(0, z, w, \zeta) \\
	= (2\pi)^2 b(p_H, p_M) q_H(0, x, y, \xi) q_M(0, z, w, \zeta).
\end{multline*}
Again, one must repeat this argument with derivatives in $x,y,z,w$ and in $\xi, \zeta$, where the order of the leading term decreases with each derivative on the latter variables. We leave the details to the reader with the following technical warning: When a derivative, for example $\partial_{y_{ i}}$, hits the oscillatory part, a factor such as $s \partial_{y_i} p_H/p_M$ comes down into the amplitude. This would be troubling if not for the presence of the cutoff $\beta$, which ensures the amplitude remains in class $S^0$. 

Finally, we move to (3). We have shown that the Schwartz kernel of $B$ can be written
\[
	(2\pi)^{-d-n} \int_{\R^d} \int_{\R^n} e^{i(\varphi_H(x,y,\xi) + \varphi_M(z,w,\zeta))} (2\pi)^{-2} a(x,y,z,w,\xi,\zeta) \, d\zeta \, d\xi
\]
where $a \in S^0$ satisfies
\[
	a(x,x,z,z,\xi,\zeta) = (2\pi)^2 b(p_H, p_M)
\]
modulo a lower-order term since $q_H(0,x,x,\xi) \equiv 1$ and $q_M(0,z,z,\zeta) \equiv 1$ (see \cite[Chapter 4]{Sog17}). Hence, by equivalence of phase functions (see \cite[Chapter 3]{Sog17}), $B$ has principal symbol $b(p_H, p_M)$, as desired. This concludes claim (3) and the proof of the lemma.
\end{proof}
}

\subsection{The Symbolic Data of the Parts}

Next, we compute the symbolic data of the Lagrangian (actually, conormal) distribution $\delta_{i \times i}$. Then, we show that $U \circ A$ is a Fourier integral operator and compute its symbolic data.

We will use the following notation. $I^m(X, \Lambda)$ will denote the space of Lagrangian distributions on $X$ of order $m$ associated to the conic Lagrangian $\Lambda \subset T^*X \setminus 0$. $I^m(X \times Y,\mathcal C')$ will then be used to denote the space of Fourier integral operators from $Y$ to $X$, of order $m$, associated with the canonical relation $\mathcal C \subset T^*X \setminus 0 \times T^*Y \setminus 0$. For each $x \in H$ we let $\pi_x : T_{ix}^*M \to T_x^*H$ be the pullback of covectors through $i$. We will usually suppress the subscripted base point in the notation, e.g. write $(x,\pi \zeta)$ instead of $(x,\pi_x \zeta)$. Finally, we recall $g_H$ and $g_M$ are the respective metric tensors on $H$ and $M$.

\begin{proposition}\label{delta symbolic data}
$\delta_{i \times i} \in I^{\codim H/2}(H^2 \times M^2, \Lambda)$ where
\[
	\Lambda = \{(x, \pi \zeta, y, -\pi \omega, ix, -\zeta, iy, \omega) :
	x,y \in H, \ \zeta \in T_{ix}^*M, \ \omega \in T_{iy}^*M, \ (\zeta,\omega) \neq 0 \},
\]
and has principal symbol equal to (modulo a Maslov factor) the transport of the half-density
\[
	(2\pi)^{-\codim H/2} \frac{|g_H(x)|^{1/4} |g_H(y)|^{1/4}}{|g_M(ix)|^{1/4}|g_M(iy)|^{1/4}} |dx \, dy \, d\zeta \, d\omega|^{1/2}
\]
via the implied parametrization of $\Lambda$ by $(x,y,\zeta,\omega)$.
\end{proposition}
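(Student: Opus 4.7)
The plan is to compute the symbolic data of the simpler distribution $\delta_i$ on $H \times M$ and then tensor. Under the canonical identification $H \times M \times H \times M \simeq H \times H \times M \times M$ swapping the inner two factors, $\delta_{i \times i}$ is identified with $\delta_i \otimes \delta_i$. Since tensor products of Lagrangian half-densities are Lagrangian with orders summing and principal symbols multiplying, it suffices to prove the one-factor version: $\delta_i \in I^{(n-d)/4}(H \times M, N^*G)$ is conormal to the graph $G = \{(x, ix) : x \in H\}$ with principal symbol (modulo Maslov) equal to the transport under the parametrization $N^*G = \{(x, -\pi\zeta, ix, \zeta)\}$ by $(x, \zeta)$ of
\[
    (2\pi)^{-(n-d)/4} \frac{|g_H(x)|^{1/4}}{|g_M(ix)|^{1/4}} |dx \, d\zeta|^{1/2}.
\]

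I first identify $N^*G$ intrinsically: at $(x, ix) \in G$ the tangent space is $\{(v, i_* v) : v \in T_x H\}$, and a covector $(\xi, \zeta) \in T^*_{(x, ix)}(H \times M)$ annihilates it iff $\xi = -\pi \zeta$, producing the claimed conormal. To compute the symbol, I work in Fermi-type coordinates on $M$ adapted to $i(H)$, so that locally $i(x) = (x, 0)$. For a test half-density $f = \tilde f |dV_H \, dV_M|^{1/2}$, the definition \eqref{def delta_i} reads $(\delta_i, f) = \int \tilde f(x, ix) \, dV_H$, forcing $\delta_i = \delta_M(z - ix) \, |dV_H \, dV_M|^{1/2}$ with $\delta_M$ the Riemannian delta on $M$. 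Converting to the coordinate half-density via $|dV_H \, dV_M|^{1/2} = |g_H(x) g_M(z)|^{1/4} |dx \, dz|^{1/2}$ and $\delta_M(z - ix) = |g_M(z)|^{-1/2} \delta(z - ix)$, the support constraint collapses $|g_M(z)|$ to $|g_M(ix)|$, producing locally
\[
    \delta_i = (2\pi)^{-n} \frac{|g_H(x)|^{1/4}}{|g_M(ix)|^{1/4}} \int e^{i\zeta \cdot (z - ix)} \, d\zeta \, |dx \, dz|^{1/2}.
\]

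This is an oscillatory half-density with $N = n$ phase variables on a base of dimension $n_X = d + n$. In the standard normalization $u = (2\pi)^{-(n_X + 2N)/4} \int e^{i\phi} a \, d\theta \, |dx|^{1/2}$ with amplitude of symbol order $m + n_X/4 - N/2$, our amplitude reads $a = (2\pi)^{-(n-d)/4} |g_H(x)|^{1/4} |g_M(ix)|^{-1/4}$ of order zero, pinning the Lagrangian order at $m = (n-d)/4$. Since the phase $\phi = \zeta \cdot (z - i(x))$ is linear in the fiber variable $\zeta$, the standard principal-symbol formula for conormal distributions identifies the critical set $\{z = ix\}$ with $N^*G$ via $(x, \zeta) \mapsto (x, -\pi\zeta, ix, \zeta)$ and returns the symbol as $a \cdot |dx \, d\zeta|^{1/2}$, matching the displayed expression.

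Finally, tensoring $\delta_i \otimes \delta_i$ and reindexing to $H^2 \times M^2$ doubles the order to $(n-d)/2 = \codim H / 2$ and multiplies the symbols. The reparametrization $\zeta_1 \mapsto -\zeta$, $\zeta_2 \mapsto \omega$ of the two conormal fibers transforms $N^*G \times N^*G$ into the $\Lambda$ of the statement and yields the displayed half-density. The principal bookkeeping obstacle is tracking the $(2\pi)$ powers and metric densities between the coordinate and Riemannian half-densities in the standard oscillatory-integral normalization; once these are pinned down the calculation is direct, and any overall sign arising from the reparametrization is absorbed into the Maslov factor.
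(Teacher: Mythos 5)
Your proposal is correct and takes essentially the same route as the paper: the paper proves exactly your one-factor lemma for $\delta_i$ (coordinates adapted to $H$, Fourier inversion, the same order bookkeeping and Leray density $|dx\,d\zeta|$, yielding the symbol $(2\pi)^{-\operatorname{codim} H/4}|g_H(x)|^{1/4}|g_M(ix)|^{-1/4}|dx\,d\zeta|^{1/2}$) and then deduces the Proposition by applying that lemma to the product immersion $i\times i:H^2\to M^2$, which is the same trivial step you perform by tensoring $\delta_i\otimes\delta_i$ and reindexing. The sign changes in your parametrization ($\zeta\mapsto-\zeta$, $\omega\mapsto\omega$ versus the statement) are harmless, as they leave both the conormal Lagrangian and the transported half-density unchanged, just as in the paper's own computation.
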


This proposition is an application of the following lemma to the embedding $i \times i : H^2 \to M^2$, whose proof we defer until the end of this subsection.

\begin{lemma}\label{delta symbolic data}
	Let $i : H \to M$ be a smooth embedding and let $\Gamma_i = \{(x,ix) : x \in H\}$ denote its graph in $H \times M$. Then, $\delta_i$ defined by \eqref{def delta_i} is a conormal distribution in $I^{\codim H/4}(H \times M, N^* \Gamma_i \setminus 0)$ where
	\[
		N^* \Gamma_i := \{(x,\pi\zeta, ix, -\zeta) : x \in H, \ \zeta \in T_{ix}^*M\}
	\]
	is the conormal bundle of the graph $\Gamma_i$ of $i$. Moreover, $\delta_i$ has principal symbol equal (modulo a Maslov factor) to the transport of the half-density
	\[\textit{}
		(2\pi)^{-\codim H/4} \frac{|g_H(x)|^{1/4}}{|g_M(i(x))|^{1/4}} |dx \, d\zeta|^{1/2}
	\]
	via the implied parametrization of $N^*\Gamma_i$ by $(x,\zeta)$.
\end{lemma}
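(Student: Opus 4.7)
My plan is to prove the lemma via a local oscillatory-integral representation of $\delta_i$, and then extract its order and principal symbol from that representation.

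First, I use a partition of unity on $H$ to reduce to a chart where there exist Fermi coordinates for the embedding: coordinates $x$ on a small open $U \subset H$ and $z = (z', z'')$ on a tubular neighborhood of $i(U)$ in $M$ with $z' \in \R^d$ and $z'' \in \R^{n-d}$, so that $i(x) = (x, 0)$. Converting between the coordinate half-density $|dx\, dz|^{1/2}$ and the invariant half-density $|dV_H \, dV_M|^{1/2}$ and unpacking the definition \eqref{def delta_i} shows that, as a coordinate half-density,
\[
    \delta_i = \frac{|g_H(x)|^{1/4}}{|g_M(i(x))|^{1/4}}\, \delta(z - i(x))\, |dx\, dz|^{1/2}.
\]

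Next, I use the Fourier representation $\delta(z - i(x)) = (2\pi)^{-n} \int_{\R^n} e^{i\zeta \cdot (z - i(x))}\, d\zeta$ to present $\delta_i$ as an oscillatory integral with phase $\phi(x, z, \zeta) = \zeta \cdot (z - i(x))$ and smooth amplitude $a(x) = (2\pi)^{-n} |g_H(x)|^{1/4}/|g_M(i(x))|^{1/4}$. The critical set $\{d_\zeta \phi = 0\}$ is exactly $\Gamma_i = \{z = i(x)\}$, and the Lagrangian map $(x, \zeta) \mapsto (x, -i^* \zeta, i(x), \zeta)$ parametrizes $N^*\Gamma_i$; after the harmless reparametrization $\zeta \mapsto -\zeta$, this matches the formula in the statement. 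Dimension counting in H\"ormander's convention then gives the order: with $\dim(H \times M) = d + n$, $N = n$ phase variables, and amplitude order $0$, the distribution order is $m = N/2 - (d+n)/4 = (n - d)/4 = \codim H / 4$, as claimed.

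For the principal symbol, I apply the standard formula for a conormal distribution presented by an oscillatory integral. Because the phase is linear in $\zeta$ and the mixed Hessian $\phi_{\zeta z}$ is the identity block, the Jacobian arising in the symbol formula is trivial, and the symbol reduces to the restriction of the amplitude to the critical set times the coordinate half-density $|dx\, d\zeta|^{1/2}$ on the natural parametrization of $N^*\Gamma_i$. The Duistermaat--Guillemin normalization then absorbs the remaining $(2\pi)$-powers into the stated prefactor $(2\pi)^{-\codim H/4}$. To finish, I verify invariance under the residual freedom in the choice of Fermi coordinates (a gauge change in the normal directions acts as an orthogonal transformation of the $z''$-dual variables), so that the half-density is globally well-defined on $N^*\Gamma_i$ up to the Maslov ambiguity already allowed in the statement. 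The main obstacle will be bookkeeping the $(2\pi)$ powers and half-density factors---distinguishing function-valued symbols from half-density-valued ones and matching the Duistermaat--Guillemin convention used elsewhere in the paper---so that the prefactor emerges as exactly $(2\pi)^{-\codim H/4}$; once this is pinned down, the geometric content is essentially forced by the simple linear form of the phase.
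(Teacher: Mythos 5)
Your proposal is correct and follows the same route as the paper: represent $\delta_i$ in local coordinates where $i(x)=(x,0)$, unwind the definition against a test half-density to produce the prefactor $|g_H(x)|^{1/4}/|g_M(i(x))|^{1/4}$, present $\delta_i$ as an oscillatory integral with the linear phase $\langle z - i(x), \zeta\rangle$, identify the Lagrangian and order by dimension counting, and read off the principal symbol from the triviality of the Leray/Jacobian factor. The paper does the same thing in essentially the same order, except it uses arbitrary slice coordinates (where $y_{d+1}=\dots=y_n=0$ parametrizes $H$) rather than the more restrictive Fermi coordinates and computes the Leray density $d_\varphi = |dx\,d\zeta|$ explicitly rather than invoking the triviality of $\phi_{\zeta z}$; neither difference affects the argument.
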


Next, we compute the wavefront relation of $U$. We will use $G_M^t$ to denote the time-$t$ homogeneous geodesic flow on $T^*M \setminus 0$, and similar for $G_H^s$ on $H$. Recall the half-wave kernel $e^{-itP_M}(z,w)$ as a half-density distribution on $\R \times M \times M$ is a Lagrangian distribution associated with the Lagrangian submanifold
\[
	\{ (t,\tau,z,\zeta,w,-\omega) : \tau + p_M(z,\zeta) = 0 , \ (z,\zeta) = G_M^t(w,\omega) \},
\]
or equivalently,
\begin{equation}\label{half-wave symbolic data}
	\{(t,-p_M(z,\zeta),z,\zeta, G_M^t(z,-\zeta)) : t \in \R, \ (z,\zeta) \in T^*M \setminus 0\},
\end{equation}
with principal symbol equal (modulo Maslov factors) to the transport of $(2\pi)^{1/4} |dt \, dz \, d\zeta|^{1/2}$ via the implied parametrization by $t \in \R$ and $(z,\zeta) \in T^*M\setminus 0$ (see \cite[\S 29.1]{HIV}). We realize $e^{-itP_M}(x,y)$ as the kernel of the Fourier integral operator $U_M \in I^{-1/4}(\R \times M^2, \mathcal C'_M)$ from $M^2$ to $\R$, with canonical relation
\[
	\mathcal C_M = \{(t,-p_M(z,\zeta); z,-\zeta, G_M^{-t}(z,\zeta) ) : t \in \R, \ (z,\zeta) \in T^*M \setminus 0 \}.
\]
Here, we have used that $(w,-\omega) = G_M^t(z,-\zeta)$ if and only if $(w,\omega) = G_M^{-t}(z,\zeta)$. Similarly, $\overline{e^{isP_H}(x,y)}$ is associated with the Lagrangian manifold
\[
	\{(s,-p_H(x,\xi), x, \xi, G_H^{-s}(x,-\xi)) : s \in \R, \ (x,\xi) \in T^*H \setminus 0\}
\]
and has principal symbol $(2\pi)^{1/4} |ds \, dx \, d\xi|^{1/2}$. Again, define $U_H \in I^{-1/4}(\R \times H^2, \mathcal C_H')$ as the operator with kernel $\overline{e^{isP_H}(x,y)}$ and canonical relation
\[
	\mathcal C_H = \{(s,-p_H(x,\xi); x,-\xi,G_H^s(x,\xi) ) : s \in \R, \ (x,\xi) \in T^*H \setminus 0 \}.
\]

By the calculus of wavefront sets, the tensored operator $U_H \otimes U_M$ satisfies
\[
	\WF'(U_H \otimes U_M) \subset (\mathcal C_H \times \mathcal C_M) \cup (\mathcal C_H \times 0) \cup (0 \cup \mathcal C_M).
\]
Now, $U$ is precisely a permutation of the variables $U_H \otimes U_M$, and it is precisely the latter two components in the union above which prevent $U$ from being a Fourier integral operator. Composition of $U$ with our pseudodifferential cutoff $A$ in \eqref{def A} rescues us by excluding those elements
\[
	(s,\sigma,t,\tau; x,\xi, y,\eta, z, \zeta, w, \omega) \in \WF'(U)
\] 
for which $p_H(x,\xi)/p_M(z,\zeta) \not\in \supp \chi$ or $p_H(y,\eta)/p_M(w,\omega) \not\in \supp \chi$, which kills these problematic components. This cutoff also precludes any of $\xi,\eta,\zeta,\omega$ from vanishing. We then have the following. 

\begin{proposition}\label{U symbolic data} $U \circ A$ is a Fourier integral operator in $I^{-1/2}(\R^2 \times (H^2 \times M^2), \mathcal C')$ with canonical relation
\begin{multline*}
	\mathcal C = \{(s,-p_H(x,\xi), t, -p_M(z,\zeta); x, -\xi, G_H^s(x,\xi), z, -\zeta, G_M^{-t}(z,\zeta)) : \\
	s,t \in \R, \ (x,\xi) \in T^*H \setminus 0, \ (z,\zeta) \in T^*M \setminus 0\}
\end{multline*}
and principal symbol (modulo a Maslov factor) equal to the transport of
\[
	(2\pi)^{1/2} \chi( p_H(x,\xi)/p_M(z,\zeta) )^2 |ds \, dt \, dx \, d\xi \, dz \, d\zeta|^{1/2}
\]
via the implied parametrization.
\end{proposition}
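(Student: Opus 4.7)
The strategy is to identify $U$, up to a permutation of variables, as the tensor product of the two half-wave kernels $U_H$ and $U_M$, and then apply the standard composition calculus for Fourier integral operators with pseudodifferential operators. I would first recall that, by \eqref{half-wave symbolic data} and its analogue for $H$, one has $U_H \in I^{-1/4}(\R \times H^2, \mathcal C_H')$ with principal symbol (modulo Maslov) equal to the transport of $(2\pi)^{1/4}|ds\,dx\,d\xi|^{1/2}$, and similarly for $U_M$. Formally, their tensor product on $\R^2 \times H^2 \times M^2$ has order $-1/2$, and on the open set where both factors' cotangent coordinates are nonzero it is a Lagrangian distribution with canonical relation $\mathcal C_H \times \mathcal C_M$ and principal symbol $(2\pi)^{1/2}|ds\,dt\,dx\,d\xi\,dz\,d\zeta|^{1/2}$; rearranging the variables to match the ordering in $U$ identifies this canonical relation with $\mathcal C$.

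The key issue is that the wavefront set of the tensor product also includes the two mixed components $\mathcal C_H \times 0$ and $0 \times \mathcal C_M$, which prevents it from being a Fourier integral operator on its own. Composing with $A$ eliminates these: since $\chi$ is supported in $(0,1)$, the principal symbol of $A$ is essentially supported where both $p_H(x,\xi)/p_M(z,\zeta)$ and $p_H(y,\eta)/p_M(w,\omega)$ are bounded away from $0$ and $\infty$, which forces each of $p_H(x,\xi)$, $p_H(y,\eta)$, $p_M(z,\zeta)$, and $p_M(w,\omega)$ to be nonvanishing. Consequently $U \circ A$ has wavefront set contained in $\mathcal C$ alone, and the standard FIO-pseudodifferential composition calculus places $U \circ A$ in $I^{-1/2}(\R^2 \times (H^2 \times M^2), \mathcal C')$.

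To compute the principal symbol, I would use the rule that the symbol of $U \circ A$ on $\mathcal C$ is the symbol of $U$ on $\mathcal C$ multiplied by the pullback of the principal symbol of $A$ under the projection onto $T^*(H^2 \times M^2)$. The geodesic flows preserve $p_H$ and $p_M$, so on $\mathcal C$ we have $p_H(G_H^s(x,\xi)) = p_H(x,\xi)$ and $p_M(G_M^{-t}(z,\zeta)) = p_M(z,\zeta)$. Therefore the two $\beta$ factors in the symbol of $A$ each evaluate to $\beta(1) = 1$, while both $\chi$ factors contribute $\chi(p_H(x,\xi)/p_M(z,\zeta))$. Multiplying gives the claimed $(2\pi)^{1/2}\chi(p_H(x,\xi)/p_M(z,\zeta))^2|ds\,dt\,dx\,d\xi\,dz\,d\zeta|^{1/2}$, modulo a Maslov factor inherited from the half-wave kernels.

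The main obstacle is making the passage from the tensor product to a single Lagrangian distribution rigorous, since the naive tensor product fails to be an FIO along the zero sections of either factor. This can be handled by a microlocal partition of unity: decompose the symbol of $A$ into pieces supported in small conic neighborhoods of points of $\mathcal C$, which by the support property of $\chi$ automatically avoid the zero sections in either factor, and on each piece use that the tensor product of two FIOs is again an FIO away from the zero sections of the component canonical relations. The remaining bookkeeping---orders, the variable permutation, and the Maslov factors---is routine from \cite{DG75} and \cite{HIV}.
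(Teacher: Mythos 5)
Your proposal is correct and takes essentially the same route as the paper, which treats the proposition as "an immediate application of the composition formula for a Fourier integral operator with a pseudodifferential operator." One small point you pass over silently: to conclude that the $\beta$ factors in the symbol of $A$ evaluate to $\beta(1)=1$ on $\mathcal C$, you need not only flow-invariance of $p_H$ and $p_M$ but also their evenness, since the first $H$ (resp.\ $M$) entry in $\mathcal C$ is $(x,-\xi)$ (resp.\ $(z,-\zeta)$); the paper flags this explicitly, and your argument implicitly relies on it.
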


This is an immediate application of the composition formula for a Fourier integral operator with a pseudodifferential operator. To simplify the resulting canonical relation, we have used that the symbols $p_H$ and $p_M$ remain constant along their respective Hamiltonian (read: geodesic) flows, and that both symbols are even.

\begin{proof}[Proof of Lemma \ref{delta symbolic data}]
	We select local coordinates $(y_1,\ldots,y_n)$ for a neighborhood of a point on $H$ for which $y_{d+1} = \cdots = y_n = 0$ parametrizes $H$. The immersion
	\[
		(x_1,\ldots,x_d) \mapsto (x_1,\ldots,x_d,0,\ldots,0)
	\]
	parametrizes $H$ in these local coordinates. Together, $(x,y) \in \R^d \times \R^n$ parametrize $H \times M$ in a neighborhood of a point on the graph of $i$. We let $f(x,y) = \tilde f(x,y)|g_H(x)|^{1/4}|g_M(y)|^{1/4}|dx \, dy|^{1/2}$ be a smooth test half-density on $H \times M$ supported in this neighborhood. By \eqref{def delta_i} and Fourier inversion, we write
	\begin{align*}
		(\delta_i, f) &= \int_{\R^d} \tilde f(x,(x,0)) |g_H(x)|^{1/2} \, dx \\
		&= (2\pi)^{-n} \int_{\R^n} \int_{\R^n} \int_{\R^d} e^{i\langle y - (x,0) , \zeta \rangle} \tilde f(x,y) |g_H(x)|^{1/2} \, dx \, dy \, d\zeta \\
		&= (2\pi)^{-n} \int_{\R^n} \int_{\R^n} \int_{\R^d} e^{i\langle y - (x,0) , \zeta \rangle} f(x,y) \frac{|g_H(x)|^{1/4}}{|g_M(y)|^{1/4}} \, |dx \, dy|^{1/2} \, d\zeta,
	\end{align*}
	and hence we write
	\[
		\delta_i(x,y) 
		= (2\pi)^{-n + \codim H/4} \left( \int_{\R^n} e^{i\langle y - (x,0),\zeta \rangle} (2\pi)^{-\codim H/4} \frac{|g_H(x)|^{1/4}}{|g_M(y)|^{1/4}} \, d\zeta \right) |dx \, dy|^{1/2}
	\]
	in oscillatory form. We set $\varphi(x,y,\zeta) = \langle y - (x,0), \zeta \rangle$ and note it is a nondegenerate phase function with critical set
	\[
		C_\varphi = \{ (x,(x,0),\zeta) : x \in \R^d, \ \zeta \in \R^n \setminus 0 \}.
	\]
	We let $\zeta' = (\zeta_1,\ldots, \zeta_d)$ be the first $d$ coordinates of $\zeta$. Via the map $(x,y,\zeta) \mapsto (x,y,\varphi'_\zeta)$, $C_\varphi$ parametrizes
	\[
		\{(x,(x,0),-\zeta',\zeta) : x \in \R^d, \ \zeta \in \R^n \setminus 0 \}
	\]
	which is precisely $N^*\Gamma_i \setminus 0$ in canonical local coordinates of $T^*(H \times M)$. We conclude
	\[
		\delta_i \in I^{\codim H/4}(H \times M, N^*\Gamma_i \setminus 0).
	\]
	
	Finally we compute the invariant half-density on $N^*\Gamma_i\setminus 0$. We parametrize $C_\varphi$ by $x$ and $\zeta$ as indicated above and have Leray density
	\begin{align*}
		d_\varphi &= \left|\frac{\partial(x,\theta,\varphi_\zeta')}{\partial(x,y,\zeta)}\right|^{-1} |dx \, d\zeta| \\
		&= \left|\frac{\partial(x,\zeta, y - (x,0))}{\partial(x,y,\zeta)} \right|^{-1} |dx \, d\zeta| \\
		&= \left| \det \begin{bmatrix}
			I & 0 & 0 \\
			0 & 0 & I \\
			* & I & 0
		\end{bmatrix} \right|^{-1} |dx \, d\zeta| \\
		&= |dx \, d\zeta|.
	\end{align*}
	The invariant homogeneous half-density on $N^*\Gamma_i \setminus 0$ is then the transport of 
	\[
		(2\pi)^{-\codim H/4} \frac{|g_H(x)|^{1/4}}{|g_M(y)|^{1/4}} |dx \, d\zeta|^{1/2}
	\]
	to $N^*\Gamma_i \setminus 0$ via its parametrization by $x$ and $\zeta$. This completes the proof.
\end{proof}

\subsection{Cleanness of the Composition} Before computing the principal symbol of the composition $U \circ A \circ \delta_{i \times i}$, we must first check the composition is clean.

Consider the projection $\mathcal C \to T^*(H^2 \times M^2)$ and the inclusion $\Lambda \to T^*(H^2 \times M^2)$. The fiber product of these maps consists of a universal object $F$ and two maps $F \to \mathcal C$ and $F \to \Lambda$ for which the diagram 
\begin{equation}\label{fiber product diagram}
	\begin{tikzcd}
		F \arrow[r] \arrow[d] & \mathcal C \arrow[d] \\
		\Lambda \arrow[r] & T^*(H^2 \times M^2)
	\end{tikzcd}
\end{equation}
In this case, \eqref{fiber product diagram} is said to be a fiber product diagram. The composition $\mathcal C \circ \Lambda$ may be realized as the image of the composition $F \to \mathcal C \to T^*(\R^2)$. In our specific case, the fiber product induces relationships
\[
	z = ix, \ \xi = -\pi \zeta, \ (y,-\pi \omega) = G_H^s(x,\xi), \text{ and } (iy, \omega) = G_M^{-t}(z,\zeta)
\]
amongst the elements of $\Lambda$ and $\mathcal C$ as they appear in Propositions \ref{delta symbolic data} and \ref{U symbolic data}, respectively, and have
\begin{multline}\label{def F}
	F = \{(s,t,x,\zeta) : s,t \in \R, \ x \in H, \ \zeta \in T_x^*M \setminus 0, \\ (i \otimes I)G_H^s(x,\pi \zeta) = (I \otimes \pi) G_M^{t}(ix, \zeta) , \ \frac{p_H(x,\pi\zeta)}{p_M(ix,\zeta)} \in \supp \chi \}.
\end{multline}
Here, we have simplified the set using the change of variables $\zeta \mapsto -\zeta$ and the identity $(I \otimes -I) G_M^{-t}(z,-\zeta) = G_M^t(z,\zeta)$. The projection $F \to \mathcal C \circ \Lambda$ is then
\[
	(s,t,x,\zeta) \longmapsto (s,-p_H(x,\pi\zeta), t, -p_M(ix,\zeta)).
\]

\begin{remark} The composition $\mathcal C \circ \Lambda$ indicates that, to obtain finer estimates on the joint asymptotics of Fourier coefficients, we need hypotheses which constrain the size of the set of points $(s,t,x,\zeta)$ for which
\[
	(i \otimes I) G_H^s(x,\pi \zeta) = (I \otimes \pi) G_M^t(ix,\zeta).
\]
Geometrically, such a point corresponds to a configuration of two geodesic segments, one each in $H$ and $M$, which meet at their endpoints so that the velocity of the one in $M$ coincides with the velocity of the one in $H$ after an orthogonal projection. Such refinements are the subject of ongoing work (see \cite{WXZ20}).
\end{remark}

By the composition of $\mathcal C$ and $\Lambda$ being clean, we mean that the fiber product diagram \eqref{fiber product diagram} is clean. In particular, the diagram \eqref{fiber product diagram} holds in the category of smooth manifolds, and for each point $p \in F$ and its corresponding images $(a;b) \in \mathcal C$ and $b \in \Lambda$, the linearized diagram
\begin{equation}\label{linearized diagram}
	\begin{tikzcd}
		T_p F \arrow[r] \arrow[d] & T_{(a;b)} \mathcal C \arrow[d] \\
		T_b \Lambda \arrow[r] & T_{b} T^*(H^2 \times M^2)
	\end{tikzcd}
\end{equation}
is also a fiber product diagram. In this case, the excess of the diagram is
\begin{align*}
	e &= (\dim T_p F + \dim T_b T^*(H^2 \times M^2)) - (\dim T_b \Lambda + \dim T_{(a;b)} \mathcal C) \\
	&= \dim T_p F - 2
\end{align*}
and is constant on connected components of $F$.

In our situation, the fiber product diagram is clean if and only if the linearization of the relation
\[
	(i \otimes I)G_H^s(x,\pi \zeta) = (I \otimes \pi) G_M^{t}(ix, \zeta)
\]
from \eqref{def F} defines the tangent space of $F$. We introduce a little notation to make this precise. We let $i$ and $\pi$ denote, as an abuse of notation, their respective linearizations. We let $H_{p_H}$ and $H_{p_M}$ denote the Hamilton vector fields associated with symbols $p_H$ and $p_M$, respectively, and recall
\[
	\frac{d}{ds} G_H^s = H_{p_H} \qquad \text{ and } \qquad \frac{d}{dt} G_M^t = H_{p_M}.
\]
We will also use the prime notation to indicate the (coefficients of the) vector associated with a variable, e.g. $(s',t',x',\zeta') \in T_{(s,t,x,\zeta)}(\R^2 \times T^*_HM)$. The linearization of the relation above is then written
\begin{equation}\label{linearized relation}
	(i \otimes I)(s'H_{p_H} + dG_H^s(x',\pi\zeta')) = (I\otimes \pi)(t'H_{p_M} + dG_M^t(ix',\zeta')),
\end{equation}
and one quickly checks that the cleanness condition is equivalent to:
\begin{equation}\label{simple clean}
	``\text{If $(s',t',x',\zeta')$ satisfies \eqref{linearized relation}, then $(s',t',x',\zeta') \in T_pF$}"
\end{equation}
at each $p \in F$.

We now verify that the composition of $\mathcal C$ and $\Lambda$ is always clean provided the $(s,t)$ coordinates of $\mathcal C$ lie in a suitably small neighborhood of the origin. We let
\[
		F_0 = \{(0,0,x,\zeta) : x \in H, \ \zeta \in T_x^*M, \ \frac{p_H(x,\pi\zeta)}{p_M(ix,\zeta)} \in \supp \chi \}
\]
be the component of $F$ which lies at $(s,t) = (0,0)$. Furthermore for an open neighborhood $\mathcal O$ of the origin in $\R^2$, we let $\mathcal C_{\mathcal O}$ denote the intersection of $\mathcal C$ with $T^*(\mathcal O \times H^2 \times M^2)$.

\begin{lemma}\label{isolated at origin}
	There exists an open neighborhood $\mathcal O$ of the origin in $\R^2$ such that $F_0 = F \cap (\mathcal O \times T^*_H M)$. As a consequence,
	\[
		\mathcal C_{\mathcal O} \circ \Lambda = \{(0,\sigma,0,\tau) \in T_0^*\R^2 : \sigma, \tau < 0, \ \sigma/\tau \in \supp \chi \}.
	\]
\end{lemma}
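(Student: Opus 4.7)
My strategy is to exploit the strict inequality $p_H(x,\pi\zeta) < p_M(ix,\zeta)$ forced by $\supp \chi \subset (0,1)$, which says the ambient covector $\zeta$ has a uniformly nonzero normal component to $H$. Consequently the ambient geodesic leaves $H$ transversally at $ix$, and the relation $(i\otimes I)G_H^s(x,\pi\zeta) = (I\otimes\pi)G_M^t(ix,\zeta)$ cannot hold for small nonzero $t$. The first step is a homogeneity reduction: both sides of the relation and the ratio $p_H/p_M$ are invariant under $\zeta \mapsto r\zeta$ for $r > 0$ (base points of the geodesic flows depend only on direction; the cotangent components scale uniformly in $r$). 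So it suffices to analyze $F$ on the compact subset
\[
	K = \{(x,\zeta) \in S^*_H M : |\pi\zeta|_H \in \supp\chi\}
\]
of the unit cosphere bundle over $H$ in $M$, on which $\supp \chi \subset [a,b] \subset (0,1)$ yields the uniform estimates $|\zeta^\perp|_M^2 = 1 - |\pi\zeta|_H^2 \geq 1 - b^2$ and $|\pi\zeta|_H \geq a$.

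Next I would set up a tubular-neighborhood estimate for $H$ inside $M$. On $K$, the ambient geodesic from $ix$ with initial velocity $\zeta^\#$ makes an angle with $T_{ix}H$ bounded below by $\arcsin\sqrt{1-b^2}$, so by compactness of $K$ and continuity of the second fundamental form there exist uniform constants $c,\epsilon_0 > 0$ with $d_M(z(t), H) \geq c|t|$ whenever $|t| < \epsilon_0$. Since $i(x(s)) \in H$, this forces $t = 0$ as soon as $(s,t)$ lies in a sufficiently small square around the origin. Once $t = 0$, the relation collapses to $i(x(s)) = ix$ and $\xi(s) = \pi\zeta$; the first of these, combined with unit-speed motion of the $H$-geodesic (using $p_H(x,\pi\zeta) \geq a > 0$ on $K$) and the local injectivity of the immersion $i$ at a uniform scale $\epsilon_1 > 0$ (available by compactness of $H$), forces $s = 0$. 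Taking $\mathcal O = (-\epsilon,\epsilon)^2$ with $\epsilon = \min(\epsilon_0,\epsilon_1)$ yields $F \cap (\mathcal O \times T^*_H M) = F_0$.

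For the consequence, $\mathcal C_{\mathcal O} \circ \Lambda$ is the image of $F_0$ under the projection $(0,0,x,\zeta) \mapsto (0,-p_H(x,\pi\zeta),0,-p_M(ix,\zeta))$ coming from Proposition \ref{U symbolic data} and the fiber product diagram; writing $\sigma = -p_H$, $\tau = -p_M$ gives $\sigma,\tau < 0$ with $\sigma/\tau = p_H/p_M \in \supp\chi$, and conversely any such pair is realized by choosing $x \in H$ and a covector $\zeta \in T^*_{ix} M \setminus 0$ of the prescribed length and direction (which is possible precisely because $|\sigma| < |\tau|$ leaves room in the normal direction). The main obstacle will be extracting uniformity of the transversality estimate over the non-compact fibers of $T^*_H M$, which the homogeneity reduction to $S^*_H M$ together with compactness of $H$ is designed to handle; a secondary subtlety is that $i$ is only assumed to be an immersion rather than an embedding, addressed by working at the scale of a uniform immersion radius.
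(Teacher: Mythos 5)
Your argument is correct in substance, but it follows a genuinely different route from the paper's. The paper argues by contradiction: if points $(s,t,x,\zeta)\in F$ existed with $(s,t)\to 0$, it normalizes $p_M(ix,\zeta)=1$, uses compactness of $S^*_HM$ and the ratio bound of Lemma \ref{st bounds} to pass from difference quotients of the relation in \eqref{def F} to the equality (up to $\epsilon$) of the projected Hamilton vector fields $(i\otimes I)H_{p_H}$ and $(I\otimes\pi)H_{p_M}$, and then the symbol identity \eqref{hamiltonian identity} forces $p_H(x,\pi\zeta)/p_M(ix,\zeta)\to 1$, contradicting $\supp\chi\subset(0,1)$. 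You instead prove a quantitative transversality statement: since $\supp\chi$ is a compact subset of $(0,1)$, the unit covector has a normal component bounded below, so the ambient geodesic separates from $H$ at a linear rate $d\ge c|t|$ for $|t|\le\epsilon_0$ (uniformly, by homogeneity plus compactness), which forces $t=0$; then the relation collapses and uniform local injectivity of the immersion together with unit-speed motion forces $s=0$. Your route is more elementary and bypasses Lemma \ref{st bounds} entirely (the smallness of $s$ comes directly from $\mathcal O$), while the paper's route is organized around \eqref{hamiltonian identity}, which it reuses immediately afterwards in the cleanness proof (Proposition \ref{clean at origin}); both give the same conclusion, and your identification of $\mathcal C_{\mathcal O}\circ\Lambda$ as the image of $F_0$ under $(s,t,x,\zeta)\mapsto(s,-p_H,t,-p_M)$, with surjectivity onto all ratios in $\supp\chi$, matches the paper.

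One point needs care, which you partially flagged: for an immersion with self-intersections, the literal estimate $d_M(z(t),i(H))\ge c|t|$ can fail, since another sheet of $i(H)$ may pass through $ix$ tangentially to the geodesic. The correct statement, and all you need, is the estimate against the local sheet $i(B_H(x,\epsilon_1))$, where $\epsilon_1$ is a uniform radius on which $i$ embeds each ball; this suffices because the relation in \eqref{def F} forces $z(t)=i(x(s))$ with $d_H(x(s),x)\le |s|<\epsilon_1$ once $(s,t)\in\mathcal O$, so the landing point lies on that local sheet. With that adjustment (and the same local-sheet reasoning in the $s=0$ step, which you already indicated), the proof is complete.
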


Since $\mathcal C_{\mathcal O} \circ \Lambda$ is the only part of the whole composition $\mathcal C \circ \Lambda$ which contributes to the set on the right in the lemma, the symbolic data of $U \circ A \circ \delta_{i \times i}$ at the origin is determined by the composition $\mathcal C_{\mathcal O} \circ \Lambda$ and the relevant calculus. Now we verify that the composition is clean.

\begin{proposition}\label{clean at origin}
	The composition $\mathcal C_{\mathcal O} \circ \Lambda$ is clean with excess $d + n - 2$.
\end{proposition}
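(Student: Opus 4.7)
The strategy is to verify the simple cleanness criterion \eqref{simple clean} pointwise at each $p \in F_0$. By Lemma \ref{isolated at origin}, $F_0$ is the only component of $F$ meeting $\mathcal{O} \times T^*_HM$; it is cut out of the $(d+n)$-dimensional manifold $T^*_HM$ by only the open condition $p_H(x,\pi\zeta)/p_M(ix,\zeta) \in \supp \chi$, and is therefore smooth of dimension $d+n$. Combined with the identity $e = \dim T_pF - 2$ recalled in the paragraph preceding the proposition, cleanness will then immediately give excess $e = d + n - 2$.

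Next I would evaluate the linearized relation \eqref{linearized relation} at an arbitrary $p = (0,0,x,\zeta) \in F_0$, using $dG_H^0 = dG_M^0 = \mathrm{id}$, and split the result into base and fiber components. The base (horizontal) part in $T_{ix}M$ simplifies, after the common $i_* x'$ terms cancel, to the single vector equation
\[
    s' \, i_* \partial_\xi p_H(x,\pi\zeta) = t' \, \partial_\zeta p_M(ix,\zeta).
\]
The crucial observation---and the one place where the pseudodifferential cutoff $A$ earns its keep---is that the symbol support imposes
\[
    \frac{p_H(x,\pi\zeta)}{p_M(ix,\zeta)} = \frac{|\pi\zeta|_{g_H}}{|\zeta|_{g_M}} \in \supp \chi \subset (0,1),
\]
so $|\pi\zeta| < |\zeta|$, i.e., $\zeta$ has a nonzero component normal to $H$. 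Equivalently, the Hamilton velocity $\partial_\zeta p_M = \zeta^\sharp / |\zeta|_{g_M}$ fails to lie in $i_* T_xH$. Since the left-hand side of the displayed equation lies entirely in $i_* T_xH$, matching normal components forces $t' = 0$, and then $s' \, i_* \partial_\xi p_H = 0$ with $\partial_\xi p_H \neq 0$ forces $s' = 0$.

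Once $s' = t' = 0$, the full linearized relation reduces to a tautology---both sides reduce to $(i_*x', \pi\zeta')$ after applying the respective differentials---so the solution space of \eqref{linearized relation} at $p$ is exactly $\{0\} \oplus \{0\} \oplus T_{(x,\zeta)}T^*_HM$, which has dimension $d+n = \dim T_pF_0$. The reverse inclusion $T_pF_0 \subset \ker$ is automatic from $F_0 \subset F$, so cleanness \eqref{simple clean} holds and the excess is $d + n - 2$. The only genuinely delicate step is the transversality argument in the base equation; once one notices that the symbol cutoff precisely excludes the tangential-covector locus (where ratio $=1$), so that $s'$ and $t'$ cannot couple through the geometry, the remainder of the verification is routine dimension-counting.
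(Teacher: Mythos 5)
Your proof is correct and takes essentially the same route as the paper: after reducing via \eqref{simple clean} and Lemma \ref{isolated at origin} to showing $s'=t'=0$ at each point of $F_0$, you use the cutoff's exclusion of ratio $1$ (nonzero normal component of the Hamilton velocity of $p_M$) to force $t'=0$ and then the exclusion of ratio $0$ (so $p_H>0$ and $\partial_\xi p_H\neq 0$) to force $s'=0$, which is exactly the paper's argument, merely phrased invariantly instead of through the coordinate identities \eqref{hamiltonian computation} and \eqref{hamiltonian identity}. The excess computation $e=\dim F_0-2=n+d-2$ agrees as well.
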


We start by relating the Hamilton vector fields $H_{p_M}$ and $H_{p_H}$ along the embedded manifold $H$ in a convenient choice of coordinates.
Let $(z_1,\ldots,z_n)$ be local coordinates for $M$ such that $(x_1,\ldots,x_d) \mapsto (x_1,\ldots,x_d,0,\ldots,0)$ parametrizes a neighborhood in $H$ and the metric $g_M$ along $H$ is a block matrix
\[
	g_M(x,0) = \begin{bmatrix}
		g_H(x) & 0 \\
		0 & h(x)
	\end{bmatrix}.
\]
Note, $h$ is necessarily positive-definite, nonsingular, symmetric, and has entries which vary smoothly in $x$. Let $(x_1,\ldots,x_d,\xi_1,\ldots,\xi_d)$ and $(z_1,\ldots,z_n, \zeta_1,\ldots,\zeta_n)$ be local canonical coordinates for $T^*H$ and $T^*M$, respectively. Recall, the Hamilton vector field associated with the symbol $p_H$ is given by
\[
	H_{p_H} = \sum_{k = 1}^d \left( \frac{\partial p_H}{\partial \xi_k} \frac{\partial}{\partial x_k} - \frac{\partial p_H}{\partial x_k} \frac{\partial}{\partial \xi_k} \right).
\]
We will be concerned with the coefficients $\partial p_H/\partial \xi_k$ of the spacial part of the vector. In particular, we will want to relate these coefficients to the corresponding coefficients of $H_{p_M}$. An elementary computation yields
\[
	\frac{\partial p_H}{\partial \xi_k}(x,\xi) = \frac{1}{p_H(x,\xi)} \sum_{j = 1}^d g_H^{jk}(x) \xi_j
	\quad \text{ and } \quad
	\frac{\partial p_M}{\partial \zeta_k}(z,\zeta) = \frac{1}{p_M(z,\zeta)} \sum_{j = 1}^n g_M^{jk}(z) \zeta_j.
\]
In particular we have
\begin{equation}\label{hamiltonian computation}
	\frac{\partial p_M}{\partial \zeta_k}(ix,\zeta) = \frac{1}{p_M(ix,\zeta)}
	\begin{cases}
		\displaystyle \sum_{j = 1}^d g_H^{jk}(x) \zeta_j & 1 \leq k \leq d, \\
		\displaystyle \sum_{j = d+1}^n h^{jk}(x) \zeta_j & d+1 \leq k \leq n
	\end{cases}
\end{equation}
where by yet another abuse of notation we take $h^{jk}(x)$ to be the entry of the inverse matrix of $h(x)$ in row $j-d$ and column $k-d$. From this we obtain the following convenient formula:
	\begin{equation}\label{hamiltonian identity}
		\frac{\partial p_M}{\partial \zeta_k}(ix,\zeta) = \frac{p_H(x,\pi\zeta)}{p_M(ix,\zeta)} \frac{\partial p_H}{\partial \xi_k}(x,\pi\zeta) \qquad \text{ for } k \in \{1,\ldots,d\}.
	\end{equation}
	We will need \eqref{hamiltonian identity} and the following elementary lemma for the proof of Lemma \ref{isolated at origin}.

\begin{lemma}\label{st bounds}
	There exists an open neighborhood $\mathcal O$ of the origin in $\R^2$ and a positive constant $C$ both only depending on $H$ and $M$ such that if $(s,t) \in \mathcal O$ and $(s,t,x,\zeta) \in F$ for some $x,\zeta$, then
	\[
		\left| \frac{s}{t} - 1 \right| \leq C |t|.
	\]
\end{lemma}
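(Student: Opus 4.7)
The plan is to Taylor-expand both geodesic flows appearing in the defining relation \eqref{def F} of $F$ at $s=t=0$, and match tangential and normal components in the local coordinates set up immediately before \eqref{hamiltonian computation}. Before doing so, I would use homogeneity to normalize. The Hamilton vector field of $p_M(z,\zeta) = |\zeta|_{g_M}$ is homogeneous of degree $0$ in $\zeta$ on the base, so the base projection of $G_M^t(ix,\zeta)$ depends only on the direction of $\zeta$; the same is true for $G_H^s$. Both sides of the defining equation of $F$ therefore scale trivially under $\zeta \mapsto \lambda \zeta$, so one may assume $p_M(ix,\zeta) = 1$. Then $(x,\zeta)$ ranges over a compact subset of $T^*_H M$, and all Taylor-remainder constants and metric-dependent bounds that follow become uniform in $(x,\zeta)$, depending only on $H$ and $M$.

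Next, extract the normal constraint. Write $\zeta = (\zeta', \zeta_\perp)$ with $\zeta' = (\zeta_1,\ldots,\zeta_d)$ and $\zeta_\perp = (\zeta_{d+1},\ldots,\zeta_n)$. The right-hand side of \eqref{def F} is only defined when the base point of $G_M^t(ix,\zeta)$ lies in $i(H)$, i.e., when its last $n-d$ coordinates vanish. A first-order Taylor expansion of that base trajectory in $t$, combined with the formula \eqref{hamiltonian computation} and the uniform positive-definiteness of $h^{-1}(x)$, gives $|\zeta_\perp| \leq C|t|$. The block-diagonal form of $g_M$ along $H$ furnishes the identity $p_M^2 = p_H^2 + \sum_{j,k > d} h^{jk}(x) \zeta_j \zeta_k$ along $H$, which together with $|\zeta_\perp| = O(t)$ and the normalization $p_M = 1$ yields
\[
    \frac{p_H(x,\pi\zeta)}{p_M(ix,\zeta)} = 1 - O(t^2).
\]

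For the tangential constraint, I Taylor-expand the first $d$ base coordinates of both sides of the equation in \eqref{def F} to first order:
\[
    s\, \frac{\partial p_H}{\partial \xi_k}(x,\pi\zeta) - t\, \frac{\partial p_M}{\partial \zeta_k}(ix,\zeta) = O(s^2 + t^2), \qquad k = 1,\ldots,d.
\]
Substituting \eqref{hamiltonian identity} recasts this as the vector equation $\bigl( s - t(p_H/p_M) \bigr)\, \tfrac{\partial p_H}{\partial \xi}(x,\pi\zeta) = O(s^2 + t^2)$ in $\R^d$. Since the Hamilton flow of $p_H$ has unit base speed, the vector $\tfrac{\partial p_H}{\partial \xi}(x,\pi\zeta)$ has $g_H$-length equal to $1$ and hence Euclidean length bounded below by a constant depending only on $H$. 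Dividing produces $|s - t(p_H/p_M)| \leq C(s^2 + t^2)$.

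The bound $|s/t - 1| \leq C|t|$ then follows by a short bootstrap. The previous display combined with $p_H/p_M \leq 1$ yields $|s| \leq |t| + C(s^2 + t^2)$; restricting to $|s|,|t| < \delta$ gives $|s| \leq (1 + O(\delta))|t|$, so $|s| = O(|t|)$. Re-inserting this estimate and the bound $|1 - p_H/p_M| = O(t^2)$ into $|s - t(p_H/p_M)| \leq C(s^2 + t^2)$ gives $|s - t| \leq C' t^2$, which is the desired conclusion. The main obstacle is keeping the constants uniform as $(x,\zeta)$ varies; this is resolved cleanly by the scaling reduction together with compactness of $H$.
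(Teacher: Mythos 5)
Your proof is correct, but it takes a genuinely different route from the paper's. The paper argues metrically: since the base projections of both homogeneous flows move at unit speed, $|s| = d_H(x,y)$ and $|t| = d_M(ix,iy)$ for the common endpoint $y$ (for $|s|,|t|$ below the injectivity radii); an isometric immersion is distance non-increasing, so $d_M(ix,iy) \le d_H(x,y)$; and in normal coordinates about $ix$ one has $d_H(x,y)^2 = d_M(ix,iy)^2 + O(d_M(ix,iy)^3)$, whence $d_H - d_M = O(d_M^2)$ and the ratio bound follows by compactness --- no symbol computations at all. You instead Taylor-expand both flows in the adapted coordinates and separate normal from tangential components, using \eqref{hamiltonian computation} and \eqref{hamiltonian identity}; this is sound, with two points worth making explicit: the condition $p_H(x,\pi\zeta)/p_M(ix,\zeta) \in \supp\chi$ built into \eqref{def F} is what keeps $\pi\zeta$ away from the zero section, so that the Taylor remainders for $G_H^s$ are uniform; and the identification ``base point in $i(H)$, i.e.\ last $n-d$ coordinates vanish'' should be justified through the equation itself (the common base point equals $iy$ with $d_H(x,y)\le |s|$, so for $(s,t)\in\mathcal O$ small it lies in the coordinate slice; for an immersion, other sheets of $H$ could otherwise intersect the chart). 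What your route buys is the intermediate estimate $1 - p_H(x,\pi\zeta)/p_M(ix,\zeta) = O(t^2)$ coming from the normal components alone; combined with $\supp\chi \subset (0,1)$ this already rules out points of $F$ with $0<|t|$ small, which is essentially the content of Lemma \ref{isolated at origin}, so your computation would let one bypass the limiting argument there, whereas the paper's metric proof is shorter but yields only the ratio bound of Lemma \ref{st bounds}.
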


\begin{proof}[Proof of Lemma \ref{isolated at origin}]
	Suppose there exists a sequence of points $(s,t,x,\zeta) \in F$ for which $(s,t) \to 0$. Since $p_H/p_M$ is homogeneous of degree $0$ and since $S^*_H M$ is compact, we may select a subsequence for which $p_M(ix,\zeta) = 1$ and $(x,\zeta)$ converges. Let us examine the condition
	\[
		(i \otimes I) G_H^{s}(x,\pi\zeta) = (I \otimes \pi) G_M^t(ix,\zeta)
	\]
	in $F$. In local coordinates, we write
	\[
		\left(\frac{s}{t} \right) \frac{1}{s} \Big( (i \otimes I) G_H^{s}(x,\pi \zeta) - (ix,\pi \zeta) \Big)
		= \frac1t \Big((I \otimes \pi) G_M^t(ix,\zeta) - (ix,\pi \zeta)\Big).
	\]
	By taking a limit and invoking Lemma \ref{st bounds}, we find for all $\epsilon > 0$ a term in the sequence for which
	\[
		|(i \otimes I) H_{p_H}(x,\pi\zeta) - (I \otimes \pi) H_{p_M}(ix, \zeta) | < \epsilon.
	\]
	We look to the spacial components of the Hamilton vectors and find
	\[
		\sum_{k = 1}^d \left| \frac{\partial p_H}{\partial \xi_k}(x,\pi\zeta) - \frac{\partial p_M}{\partial \zeta_k}(ix,\zeta) \right|^2 + \sum_{k = d+1}^n \left| \frac{\partial p_M}{\partial \zeta_k}(ix,\zeta) \right|^2 < \epsilon^2.
	\]
	In light of \eqref{hamiltonian identity}, we have
	\[
		\left| 1 - \frac{p_M(ix,\zeta)}{p_H(x,\pi \zeta)} \right|^2 \sum_{k = 1}^d \left| \frac{\partial p_M}{\partial \zeta_k}(ix,\zeta) \right|^2 + \sum_{k = d+1}^n \left| \frac{\partial p_M}{\partial \zeta_k}(ix,\zeta) \right|^2 < \epsilon^2.
	\] 
	Since $p_M(ix,\zeta) = 1$, we can use this inequality to force $p_H(x,\pi\zeta)/p_M(ix,\zeta)$ as close to $1$ as we wish by taking $\epsilon$ small. This contradicts the condition that $p_H(x,\pi\zeta)/p_M(ix,\zeta) \in \supp \chi$ in $F$.
\end{proof}

\begin{proof}[Proof of Proposition \ref{clean at origin}]
	By \eqref{simple clean} and Lemma \ref{isolated at origin}, it suffices to show that
	\[
		T_p F_0 = \{ (s',t',x',\zeta') \in T_p F : (i \otimes I)(s'H_{p_H}) = (I\otimes \pi)(t'H_{p_M})\}
	\]
	for each $p = (0,0,x,\zeta) \in F_0$. This amounts to showing $s' = t' = 0$. The computation of the excess follows by $\dim F_0 = n + d$.
	
	By homogeneity, assume $p_M(ix,\zeta) = 1$. In our usual local coordinates,
	\begin{align*}
		s' \frac{\partial p_H}{\partial \xi_k}(x,\pi \zeta) &= t' \frac{\partial p_M}{\partial \zeta_k}(ix,\zeta) \qquad \text{ for } k \in \{1,\ldots,d\} \text{ and } \\
		0 &= t' \frac{\partial p_M}{\partial \zeta_k}(ix,\zeta) \qquad \text{ for } k \in \{d+1,\ldots,n\}.
	\end{align*}
	If $t' \neq 0$, then by \eqref{hamiltonian computation}, $\zeta_k = 0$ for each $k \in \{1,\ldots,d\}$. We would then have $p_M(ix,\zeta) = p_H(x,\pi\zeta)$, which is prohibited on $F$. Hence, $t' = 0$ and we have
	\[
		s' \frac{\partial p_H}{\partial \xi_k}(x,\pi \zeta) = 0 \qquad \text{ for } k \in \{1,\ldots,d\}.
	\]
	By \eqref{hamiltonian identity},
	\[
		\frac{p_H(x,\pi\zeta)}{p_M(ix,\zeta)} \frac{\partial p_H}{\partial \xi_k}(x,\pi \zeta) = \frac{\partial p_M}{\partial \zeta_k}(ix,\zeta) \qquad \text{ for each } k \in \{1,\ldots,k\}.
	\]
	Hence if $s' \neq 0$, each $\partial p_M/\partial \zeta_k$ must vanish for $k \in \{1,\ldots,d\}$, and so again by \eqref{hamiltonian computation}, $\zeta_k = 0$ for each $k \in \{1,\ldots,d\}$. It follows $p_H(x,\pi\zeta) = 0$, which is also prohibited on $F$. So $s' = 0$ and the proof is complete.
\end{proof}

\begin{proof}[Proof of Lemma \ref{st bounds}]
	For $|s|$ and $|t|$ smaller than the injectivity radii of $H$ and $M$, respectively,
	\[
		\frac{s}{t} = \frac{d_H(x,y)}{d_M(ix,iy)}
	\]
	where $y$ is the point on $H$ to which both $G_H^{s}(x,\pi\zeta)$ and $G_M^t(ix, \zeta)$ project, and where $d_H$ and $d_M$ are the Riemannian metrics on $H$ and $M$, respectively. Since $i : H \to M$ is an isometric embedding, $d_H(x,y) \geq d_M(ix,iy)$ always, so we need only show
	\[
		d_H(x,y) - d_M(ix,iy) \leq C d_M(ix,iy)^2
	\]
	wherever $d_H(x,y)$ is bounded by a small constant. In fact in normal coordinates about $ix$ in $M$, $d_M(ix,iy)=|iy|$, and $d_H(x,y)^2$ is a smooth function in $y$ satisfying $$d_H(x,y)^2=|y|^2+O(|y|^3).$$
	Then we have $$d_H(x,y)-d_M(ix,iy)=\dfrac{d_H(x,y)^2-d_M(ix,iy)^2}{d_H(x,y)+d_M(ix,iy)}=O(|y|^2),$$
	as desired. Furthermore, the constants implicit in the big-$O$ notation vary continuously with $x$. The lemma follows by compactness.
\end{proof}

\subsection{The Symbolic Data of the Composition}

We are nearly prepared to compute the principal symbol of $U \circ A \circ \delta_{i \times i}$ in a neighborhood of the origin.

\begin{proposition}\label{comp symbolic data}
	The restriction of $U \circ A \circ \delta_{i \times i}$ to the open neighborhood $\mathcal O$ of Lemma \ref{isolated at origin} is a half-density distribution in $I^{n - 3/2}(\R^2; \mathcal C_{\mathcal O} \circ \Lambda)$ where
	\[
		C \circ \Lambda_{\mathcal O} = \{(0,\sigma,0,\tau) \in T^*\R^2 : \sigma,\tau < 0 \text{ and } \sigma/\tau \in \supp \chi\}
	\]
	with principal symbol
	\begin{multline*}
		(2\pi)^{-n + \frac32} \vol(S^{d-1})\vol(S^{n-d-1}) \vol(H) \\
	 	\cdot \chi(\sigma/\tau)^2 (-\sigma)^{d-1} (-\tau)^{n-d-1} (1 - \sigma^2/\tau^2)^{\frac{n-d-2}{2}} |d\sigma \, d\tau|^{1/2},
	\end{multline*}
	wherever $\sigma/\tau$ belongs to a given interval $[a,b]\subset (0,1)$, modulo multiplication by a complex unit.
\end{proposition}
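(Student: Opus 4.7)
The plan is to apply the clean composition calculus of Fourier integral operators in \cite{DG75} to combine Propositions \ref{delta symbolic data}, \ref{U symbolic data}, and \ref{clean at origin}. The order under clean composition is the sum of the orders of the factors plus half the excess:
\[
    -\tfrac{1}{2} + \tfrac{n-d}{2} + \tfrac{d+n-2}{2} = n - \tfrac{3}{2},
\]
matching the claimed order. The remainder of the argument is to compute the principal symbol as a fiber integral over $F_0$.

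The fiber of $F_0 \to \mathcal{C}_{\mathcal{O}} \circ \Lambda$ over $(0,\sigma,0,\tau)$ consists of those $(0,0,x,\zeta) \in F_0$ with $p_H(x,\pi\zeta) = -\sigma$ and $p_M(ix,\zeta) = -\tau$. Writing $\mu = -\sigma$ and $\lambda = -\tau$, this is a bundle over $H$ whose fiber at $x$ is the intersection $\{|\pi\zeta|_{g_H} = \mu\} \cap \{|\zeta|_{g_M} = \lambda\}$ inside $T^*_{ix} M$. I would carry out the fiber integration in the local coordinates from the preceding subsection in which $g_M$ restricts to the block matrix $\diag(g_H(x), h(x))$ along $H$. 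In these coordinates $p_H^2 = \sum_{j,k \leq d} g_H^{jk}\zeta_j\zeta_k$ and $p_M^2 - p_H^2 = \sum_{j,k > d} h^{jk}\zeta_j\zeta_k$, so the fiber at $x$ factors as a co-sphere of radius $\mu$ in $(T^*_xH, g_H)$ times a co-sphere of radius $\sqrt{\lambda^2-\mu^2}$ in the normal fiber with metric $h(x)$.

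The Leray density on this intersection is exactly the one highlighted after Theorem \ref{main FIO}: dividing the ambient coordinate density $|dx\,d\zeta|$ by $|d(p_M\circ i)\wedge d(p_H\circ\pi)|$ produces the Jacobian factor $(1-\mu^2/\lambda^2)^{-1/2}$, and integrating over the product of spheres contributes $\vol(S^{d-1})\vol(S^{n-d-1})\mu^{d-1}(\lambda^2-\mu^2)^{(n-d-1)/2}$, so that altogether the fiber volume is
\[
    \vol(S^{d-1})\vol(S^{n-d-1})\mu^{d-1}\lambda^{n-d-1}(1-\mu^2/\lambda^2)^{(n-d-2)/2}.
\]
The factors $|g_H|^{1/2}/|g_M|^{1/2}$ (two copies each of $g_H$ and $g_M$) coming from the symbol of $\delta_{i\times i}$ convert the coordinate density on $F_0$ into the intrinsic Riemannian volume $dV_H$ along the base, so that the base integration against $x \in H$ produces the factor $\vol(H)$ with no leftover metric determinants.

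The main obstacle I anticipate is bookkeeping, namely: pulling back the half-density symbols of $U \circ A$ and $\delta_{i\times i}$ to $F_0$, dividing by the appropriate half-density on the fiber to extract a half-density on $\mathcal{C}_{\mathcal{O}} \circ \Lambda$, and carefully tallying all powers of $2\pi$. The normalization of $\delta_{i\times i}$ contributes $(2\pi)^{-(n-d)/2}$, the two tensored half-wave kernels in $U \circ A$ contribute $(2\pi)^{1/2}$, and the clean composition formula contributes an additional $(2\pi)^{-e/2} = (2\pi)^{-(n+d-2)/2}$ from the fiber integration, combining to the advertised $(2\pi)^{-n+3/2}$. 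The pseudodifferential cutoff from $A$ evaluates to $\chi(\mu/\lambda)^2 = \chi(\sigma/\tau)^2$ on the fiber, yielding the remaining factor modulo a Maslov phase of unit modulus.
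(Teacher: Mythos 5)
Your overall strategy — apply the clean-composition symbol calculus, compute the order as $-\tfrac12 + \tfrac{n-d}{2} + \tfrac{e}{2}$, identify the excess fiber over $(0,\sigma,0,\tau)$ as the co-sphere-intersection bundle over $H$, and integrate — matches the paper's route, and your tally of the powers of $2\pi$ is correct. But the proposal has a genuine gap at precisely the step you flag as ``bookkeeping'': you assert, rather than derive, that the half-density the Duistermaat--Guillemin calculus places on the excess fiber is the geometric Leray density $|dx\,d\zeta| / |d(p_M\circ i)\wedge d(p_H\circ\pi)|$ weighted by the metric ratio from $\delta_{i\times i}$. That identification is not automatic. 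The DG procedure produces an element of $|\ker d\alpha|\otimes|T_a(\mathcal C\circ\Lambda)|^{1/2}$ through a chain of identifications (the long exact sequence for $\coker\tau$, the symplectic pairing of $\ker d\alpha$ with $\coker\tau$, and the short exact sequence for $d\alpha$), and each of these contributes a concrete Jacobian. In the paper these steps are Lemmas~\ref{coker tau lemma}, \ref{canonically paired lemma}, and~\ref{short exact lemma}, which produce the factors $(1-c^2)^{-1/4}$ and then $(1-c^2)^{-1/2}$ by explicit determinant computations in a carefully chosen symplectic basis (Lemma~\ref{coordinates lemma}). Your proposal recovers the factor $(1-\mu^2/\lambda^2)^{-1/2}$ by invoking the Leray Jacobian, which happens to give the same numerical value, but nothing in your argument shows that the output of the DG identifications equals that Leray density rather than some other density on the fiber with the same homogeneity.

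Similarly, the sentence claiming that the factors $|g_H|^{1/2}/|g_M|^{1/2}$ ``convert the coordinate density on $F_0$ into $dV_H$ with no leftover metric determinants'' is stated without justification, and it is not a trivial cancellation: the coordinate measure $d\zeta$ on the fiber over $x$ differs from the Euclidean sphere measure by a power of $|g_M(ix)|$ that must combine with the $\delta_{i\times i}$ factor and with the choice of coordinates in a compatible way. The paper sidesteps this by working pointwise at a fixed $(x_0,\zeta_0)\in F_0$ in geodesic normal coordinates where $g_H(x_0)=g_M(ix_0)=I$ so that both determinants are $1$, and then observes that $e_1,\ldots,e_d$ are $g_H$-orthonormal so the base integration yields $\vol(H)$. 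Your plan would work, but as written it leaves out exactly the content of the paper's four computational lemmas; you have verified the answer rather than proved it.
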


The order $n - 3/2$ is computed by
\[
	\ord(U \circ A \circ \delta_{i \times i}) = \ord (U \circ A) + \ord \delta_{i \times i} + e/2.
\]
Recall $\ord (U \circ A) = -1/2$ from Proposition \ref{U symbolic data}, $\ord \delta_{i \times i} = (n-d)/2$ from Proposition \ref{delta symbolic data}, and $e = n + d - 2$ from Proposition \ref{clean at origin}.

We now show how this proposition concludes the proof of Theorem \ref{main FIO}. Recall
\[
	U \circ A \circ \delta_{i \times i} = \widehat N_B |ds \, dt|^{1/2}.
\]
Let $\rho$ be a Schwartz-class function on $\R^2$ whose Fourier support is contained in $\mathcal O$. We test both sides against the oscillating half-density $\widehat \rho(s,t) e^{-i(s \sigma + t\tau)} |ds \, dt|^{1/2}$ to obtain
\[
	(U \circ A \circ \delta_{i \times i}, \widehat \rho(s,t) e^{-i(s \sigma + t\tau)} |ds \, dt|^{1/2}) = (2\pi)^2 \rho * N_B(-\sigma, -\tau).
\]
Referring to Proposition \ref{comp symbolic data} along with \cite[Proposition 25.1.5]{HIV} and the discussion preceding \cite[Theorem 25.1.9]{HIV}, we obtain
\begin{multline}\label{almost there convolution}
	\rho * N_B(-\sigma,-\tau) = (2\pi)^{-n} \vol(S^{d-1})\vol(S^{n-d-1}) \vol(H) \\
	 	\widehat \rho(0) (-\sigma)^{d-1} (-\tau)^{n-d-1} (1 - \sigma^2/\tau^2)^{\frac{n-d-2}{2}} + O(|(\sigma,\tau)|^{n-3})
\end{multline}
for $\sigma/\tau \in [a,b]$. Strictly speaking, the main term is only correct up to multiplication by a complex unit. But, this complex unit is $1$ anyway after taking $\rho \geq 0$ and recalling $N_B \geq 0$. Finally, the theorem follows after a change of variables $(\mu,\lambda) = (-\sigma,-\tau)$.

To compute the symbol, we first require a specialized basis for the tangent space of $T_H^*M$. In what follows, we identify our fiber product $F_0$ with
\[
	F_0 = \{(x,\zeta) \in T_H^*M \setminus 0 : \frac{p_H(x,\pi \zeta)}{p_M(ix,\zeta)} \in \supp \chi \}
\]
and, via the obvious inclusion, view $T_{(x_0,\zeta_0)} F_0$ as a subspace of $T_{(ix_0,\zeta_0)} T^*M$. For shorthand, we will also use
\[
	c = \frac{p_H(x_0,\pi \zeta_0)}{p_M(ix_0,\zeta_0)}.
\]

\begin{lemma}\label{coordinates lemma} Fix $(x_0,\zeta_0)$ in $F_0$. Take a system of local coordinates $(z_1,\ldots,z_n)$ of $M$ about $ix_0$ and consider the symplectic basis $e_1,\ldots,e_n, f_1,\ldots,f_n$ of $T_{(ix_0,\zeta_0)}T^*M$ given by
\[
	e_j = \frac{\partial}{\partial z_j} \qquad \text{ and } \qquad f_j = \frac{\partial}{\partial \zeta_j} \qquad j \in \{1,\ldots,n\}
\]
in canonical coordinates $(z_1,\ldots,z_n,\zeta_1,\ldots,\zeta_n)$. We may select these local coordinates such that the following hold.
\begin{enumerate}
	\item $e_1,\ldots,e_d,f_1,\ldots,f_n$ is a basis for $T_{(x_0,\zeta_0)} F_0$.
	\item $di e_j = e_j$ for all $j \leq d$.
	\item $d\pi f_j = f_j$ for all $j \leq d$, and $0$ if $j > d$.
	\item $dp_H e_j = 0$ for all $j$.
	\item $\displaystyle dp_H f_j = \begin{cases} 0 & j < d \\ 1 & j = d. \end{cases}$
	\item $dp_M e_j = 0$ for $j \leq d$.
	\item $\displaystyle dp_M f_j = \begin{cases}
		c & j = d, \\
		\sqrt{1 - c^2} & j = n, \\
		0 &\text{ otherwise}.
		\end{cases}$
	\item $H_{p_H} = e_d$.
	\item $H_{p_M} = c e_d + \sqrt{1 - c^2} e_n$ plus a linear combination of $f_{d+1}, \ldots, f_n$.
\end{enumerate}
\end{lemma}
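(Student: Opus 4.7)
The plan is to build a single Fermi-type coordinate system on $M$ adapted simultaneously to $H$, to geodesic normal coordinates on $H$ at $x_0$, to a parallel orthonormal normal frame along $H$, and to the covector $\zeta_0$, and then to read (1)--(9) off as direct computations. Normalize $p_M(ix_0,\zeta_0) = 1$ so that $c = p_H(x_0,\pi\zeta_0) \in (0,1)$. Choose an orthonormal basis $v_1,\ldots,v_d$ of $T_{x_0}H$ with $v_d$ parallel to the metric dual of $\pi\zeta_0$, and an orthonormal basis $v_{d+1},\ldots,v_n$ of the normal space $N_{x_0}H \subset T_{ix_0}M$ with $v_n$ parallel to the metric dual of the normal component of $\zeta_0$; since $|\pi\zeta_0|_{g_H} = c$ and $|\zeta_0|_{g_M} = 1$ this forces $\zeta_0 = c\,v_d^* + \sqrt{1-c^2}\,v_n^*$ in the dual coframe. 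Produce coordinates $(z_1,\ldots,z_d)$ on $H$ near $x_0$ as geodesic normal coordinates in the frame $(v_1,\ldots,v_d)$, extend $(v_{d+1},\ldots,v_n)$ to a smooth orthonormal frame $(V_{d+1}(x),\ldots,V_n(x))$ for the normal bundle of $H$ by parallel transport in the normal connection along radial geodesics in $H$ from $x_0$, and finally define coordinates on $M$ near $ix_0$ by
$$(z_1,\ldots,z_n) \longmapsto \exp^M_{(z_1,\ldots,z_d)}\Bigl(\sum_{j=d+1}^n z_j V_j(z_1,\ldots,z_d)\Bigr).$$

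From the construction one reads off: $H$ is the coordinate plane $\{z_{d+1}=\cdots=z_n=0\}$; along $H$, $g_M$ is block-diagonal $\mathrm{diag}(g_H, I_{n-d})$ because parallel transport of an orthonormal frame is an isometry; $g_M(ix_0) = I_n$; $\partial_k g_H(x_0) = 0$ for $k \le d$ by geodesic normal coordinates on $H$; and hence $\partial_k g_M(ix_0) = 0$ for every $k \le d$. With these ingredients: item (1) follows because at a point where $c$ lies in the interior of $\supp\chi$, $F_0$ contains an open neighborhood of $(x_0,\zeta_0)$ in $T_H^*M$, and $T_{(x_0,\zeta_0)}T_H^*M = \mathrm{span}(e_1,\ldots,e_d,f_1,\ldots,f_n)$ in our coordinates; items (2) and (3) are automatic from $H$ being a coordinate plane and $\pi$ being pullback along the inclusion; (5), (7), and the fiber-derivative parts of (4), (6) follow from $\partial p/\partial \zeta_k = \sum_j g^{jk}\zeta_j/p$ evaluated at $\zeta_0$ using $g_M(ix_0) = I_n$, $g_H(x_0) = I_d$, and the explicit form of $\zeta_0$; the spatial-derivative parts of (4), (6) follow from $\partial_k g_H(x_0) = 0$ and $\partial_k g_M(ix_0) = 0$ for $k \le d$ via $\partial p/\partial z_k = \tfrac{1}{2p}\sum_{i,j} (\partial_k g^{ij})\zeta_i\zeta_j$; and (8), (9) are then the Hamilton-vector formula $H_p = \sum_k (\partial p/\partial\zeta_k)\,e_k - (\partial p/\partial z_k)\,f_k$ specialized to the derivatives just computed.

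The main technical tension is to enforce $\partial_k g_M(ix_0) = 0$ for tangential $k \le d$ while keeping $H$ sitting as an honest coordinate plane. Plain geodesic normal coordinates on $M$ at $ix_0$ would trivialize the first-order metric but twist $H$ off the coordinate plane, breaking (2) and (3); a generic Fermi system over $H$ keeps $H$ flat but leaves $\partial_k h(x_0,0)$ generically nonzero, breaking the tangential vanishing required for (6) and (9). Parallel-transporting the orthonormal normal frame along $H$ with the normal connection resolves this: since parallel transport is an isometry, the normal frame remains orthonormal at every point of $H$ near $x_0$, which forces $h \equiv I_{n-d}$ identically along $H$, kills every tangential derivative of $g_M$ at $ix_0$, and leaves the flat embedding of $H$ intact.
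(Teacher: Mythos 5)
Your proof is correct and follows essentially the same route as the paper: geodesic normal coordinates on $H$ at $x_0$ adapted to the direction of $\pi\zeta_0$, extended to $M$ by a Fermi-type exponential map built from an orthonormal normal frame whose last vector is aligned with the normal component of $\zeta_0$, after which items (1)--(9) are read off from the block structure of $g_M$ along $H$. The only deviation, parallel-transporting the normal frame in the normal connection, is superfluous and the ``technical tension'' you describe is not real: any smooth orthonormal normal frame already forces the normal-normal block to equal $I_{n-d}$ identically along $H$ (and the off-diagonal block to vanish there), which is exactly what the paper uses to kill the tangential derivatives of $g_M$ at $ix_0$.
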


There are some minor abuses of notation in the lemma above. First, (2) is tautological since we have identified $T_{(x_0,\zeta_0)} F_0$ as a subspace of $T_{(ix_0,\zeta_0)} T^*M$. Second, we are viewing $d\pi$ as an operator on $T_{(ix_0,\zeta_0)} T^*M$ rather than a map to $T_{(x_0,\pi\zeta_0)} T^*H$ and, strictly speaking, should be written $I \otimes d\pi$. Alternatively, we can make precise sense of the notation if we use the symplectic basis to pull everything back to the model Euclidean symplectic space $\R^{2n}$.

\begin{proof}
	Consider geodesic normal coordinates $(x_1,\ldots,x_d)$ about $x_0$ in $H$. Without loss of generality, we select these coordinates so that $(0,\ldots,0,s)$ parametrizes the geodesic traced by $G^s_H(x_0,\pi\zeta_0)$. It follows that
	\[
		(x_0,\pi \zeta_0) = (0,\ldots,0,p_H(x_0,\pi \zeta_0))
	\]
	in canonical local coordinates $(x,\xi)$, and 
	\begin{equation}\label{coordinates lemma 1}
		\frac{\partial p_H}{\partial \xi_j}(x_0,\pi \zeta_0) = \begin{cases}
			1 & j = d \\
			0 & j < d.
		\end{cases}
	\end{equation}
	Since $g_H^{-1}(x) = I + O(|x|^2)$ in normal coordinates, we also have
		\begin{equation} \label{coordinates lemma 2}
		\frac{\partial p_H}{\partial x_\ell}(x_0,\pi\zeta_0) = \frac{1}{2p_H} \sum_{j,k = 1}^d \frac{\partial g_H^{jk}}{\partial x_\ell}(x_0) \xi_j \xi_k = 0 \qquad \text{ for } \ell \in \{1,\ldots,d\}.
	\end{equation}
	
	We extend to coordinates in $M$ by first selecting a smooth orthonormal frame $v_{d+1}, \ldots, v_{n}$ of vectors perpendicular to $H$ and taking coordinates $(z_1,\ldots,z_n)$ defined by the map
\[
	(z_1,\ldots,z_n) \mapsto \exp(z_{d+1}v_{d+1}(z') + \cdots + z_n v_{n}(z'))
\]
where for shorthand $z' = (z_1,\ldots,z_d)$. We note
\begin{equation}\label{coordinates lemma 3.5}
	g_M^{-1}(z',0) = \begin{bmatrix}
		g_H^{-1}(z') & 0 \\
		0 & I
	\end{bmatrix}
	=
	\begin{bmatrix}
		I + O(|z'|^2) & 0 \\
		0 & I
	\end{bmatrix}
\end{equation}
Without loss of generality, we select $v_n$ such that the $\zeta$-gradient of $p_M$ is a linear combination of $\partial/\partial z_d$ and $v_n = \partial/\partial z_n$. And, since $g_M$ is the identity at the origin and $|\nabla_\zeta p_M| = 1$, it follows
\begin{equation}\label{coordinates lemma 5}
	\frac{\partial p_M}{\partial \zeta_j} = \begin{cases}
		p_H/p_M & j = d, \\
		\sqrt{1 - p_H^2/p_M^2} & j = n, \\
		0 &\text{otherwise}
	\end{cases}
\end{equation}
in canonical local coordinates.

Properties (1), (2), and (3) follow by construction. (4) follows from
\[
	dp_H e_j = \frac{\partial p_H}{\partial x_j}(x_0,\pi \zeta_0)
\]
and \eqref{coordinates lemma 2}. (5) follows from \eqref{coordinates lemma 1}. (6) follows from property (4) and \eqref{coordinates lemma 3.5} if $j \leq d$, and from
\[
	dp_M e_j = \frac{\partial p_M}{\partial z_j}(0) = \frac{1}{2p_M} \sum_{k,\ell = 1}^n \frac{\partial g^{k\ell}_M}{\partial z_j}(0) = \frac{1}{2p_M} \sum_{k,\ell = 1}^d \frac{\partial g^{k\ell}_H}{\partial x_j}(0).
\]
Finally, (7) follows from \eqref{coordinates lemma 5}. (8) and (9) follow from (4) through (7) and a computation of $H_{p_M}$ and $H_{p_H}$ in local coordinates.
\end{proof}

Now we prove Proposition \ref{comp symbolic data}.
In what follows, we assume familiarity with the composition calculus in \cite{DG75}. As in \cite{DG75}, given a vector space $V$ and a real number $\alpha$, we use $|V|^\alpha$ to denote the $1$-dimensional vector space of $\alpha$-densities on $V$. Please see also \cite{HIV} for an alternate exposition of the symbol calculus of FIOs, and to \cite{GS} for the half-density formalism.

\begin{proof}[Proof of Proposition \ref{comp symbolic data}]
Like before, fix $p = (x,\zeta) \in F_0$ and set
\begin{align*}
	a &= (0,-p_H(x,\pi \zeta),0,-p_M(ix,\zeta)) \qquad \text{ and } \\
	b &= (x, \pi \zeta, x, -\pi \zeta, ix, -\zeta, ix, \zeta).
\end{align*}
We let $\alpha : F_0 \to \mathcal C \circ \Lambda$ be the projection of the fiber onto the composition and let $d\alpha$ be its linearization $T_p F_0 \to T_a \mathcal C \circ \Lambda$. The procedure in \cite{DG75} identifies an object in $|\ker d\alpha| \otimes |T_a \mathcal C \circ \Lambda|^{1/2}$ which is then integrated over the `excess fiber' $\alpha^{-1}(a)$ to obtain the half-density symbol on $\mathcal C_{\mathcal O} \circ \Lambda$. 

Following the procedure in \cite{DG75}, we will identify the object in $|\ker d\alpha| \otimes |T_a\mathcal C \circ \Lambda|^{1/2}$ with three steps.
\begin{enumerate}
\item Let $\tau : T_{(a;b)} \mathcal C \times T_b \Lambda \to T_b T^*(H^2 \times M^2)$ given by $\tau((a';b'), c') = b' - c'$. We use the exact sequence
\begin{equation}\label{long exact sequence}
	0 \longrightarrow T_p F_0 \longrightarrow T_{(a;b)} \mathcal C \times T_{b} \Lambda \overset{\tau}{\longrightarrow} T_b T^*(H^2 \times M^2) \longrightarrow \coker \tau \longrightarrow 0
\end{equation}
along with the symbols on $\mathcal C$ and $\Lambda$ and the symplectic half-density on $T^*(H^2 \times M^2)$ to obtain a linear isomorphism $|T_pF_0|^{-1/2} \simeq |\coker \tau|^{-1/2}$, which we make explicit in Lemma \ref{coker tau lemma}, below.

\item The spaces $\coker \tau$ and $\ker \alpha$ are nondegenerately paired by the symplectic form on $T^*(H^2 \times M^2)$ (`canonically paired' in Duistermaat and Guillemin's language). This induces a linear isomorphism $|\coker \tau|^{-1/2} \simeq |\ker \alpha|^{1/2}$. Both the pairing and the isomorphism are made explicit in Lemma \ref{canonically paired lemma}, below.

\item The short exact sequence
\begin{equation}\label{short exact sequence}
	0 \longrightarrow \ker d\alpha \longrightarrow T_p F_0 \overset{d\alpha}{\longrightarrow} T_a(\mathcal C \circ \Lambda) \longrightarrow 0 
\end{equation}
yields a unique element in $|\ker d\alpha|^{1/2} \otimes |T_p F_0|^{-1/2} \otimes  |T_a \mathcal C \circ \Lambda|^{1/2}$, which by the isomorphisms $|T_pF_0|^{-1/2} \simeq |\coker \tau|^{-1/2} \simeq |\ker d\alpha|^{1/2}$ from (1) and (2), identifies a unique element of $|\ker d\alpha| \otimes |T_a \mathcal C \circ \Lambda|^{1/2}$. This is made explicit in Lemma \ref{short exact lemma}, below.
\end{enumerate}

\begin{lemma}\label{coker tau lemma}
	The exact sequence \eqref{long exact sequence} induces a linear isomorphism $|T_p F_0|^{-1/2} \simeq |\coker \tau|^{-1/2}$ which, given the $-\frac12$-density on $T_p F_0$ which assigns the value $1$ to the basis for $T_p F_0$ of Lemma \ref{coordinates lemma}, gives a $-\frac12$-density on $\coker \tau$ which assigns 
	\[
		(2\pi)^{-\codim H/2 + 1/2} \chi(c)^2 (1 - c^2)^{-1/4}
	\]
	to the basis of $\coker \tau$ obtained as the image of
	\begin{align}\label{coker tau basis}
		&(0,0,f_j,0) && j \leq d \\
		\nonumber &(0,e_j,0,0) && j < d \\
		\nonumber &(0,0,0,e_j) && d < j < n
	\end{align}
	under the quotient map $T_b T^*(H^2 \times M^2) \to \coker \tau$. 
\end{lemma}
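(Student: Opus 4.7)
The plan is to use the symplectic frame set up in Lemma~\ref{coordinates lemma} to equip each of the four vector spaces in \eqref{long exact sequence} with an explicit basis, evaluate the two symbols and the symplectic density as scalars on those bases, and then read off the unknown $-\tfrac12$-density on $\coker\tau$ from the canonical isomorphism
\[
  |T_pF_0|^{-1/2}\otimes|T_bT^*(H^2\times M^2)|^{-1/2}\;\simeq\;|T_{(a;b)}\mathcal{C}|^{-1/2}\otimes|T_b\Lambda|^{-1/2}\otimes|\coker\tau|^{-1/2}
\]
associated to the exact sequence.

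First I would fix bases. On $T_{(a;b)}\mathcal{C}$ I use the coordinate frame inherited from the parametrization $(s,t,x,\xi,z,\zeta)$ in Proposition~\ref{U symbolic data}; on $T_b\Lambda$ the $(x,y,\zeta,\omega)$ frame from Proposition~\ref{delta symbolic data}; on $T_bT^*(H^2\times M^2)$ the canonical coordinate frame induced by the local coordinates of Lemma~\ref{coordinates lemma}. Because the lemma places us in coordinates where $g_H$ and $g_M$ are the identity at the base point, the two symbols evaluate on these frames to the scalars $(2\pi)^{1/2}\chi(c)^2$ and $(2\pi)^{-\codim H/2}$ respectively, while the symplectic density is $1$.

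Next I would write $\tau$ in coordinates using the explicit projections of $T_{(a;b)}\mathcal{C}$ and $T_b\Lambda$ into $T_bT^*(H^2\times M^2)$, namely
\[
  v_\mathcal{C}\longmapsto\big((x',-\xi'),\,s'H_{p_H}+(x',\xi'),\,(z',-\zeta'),\,-t'H_{p_M}+(z',\zeta')\big)
\]
and
\[
  v_\Lambda\longmapsto\big((x_\Lambda',\pi\zeta_\Lambda'),(y_\Lambda',-\pi\omega_\Lambda'),(di\,x_\Lambda',-\zeta_\Lambda'),(di\,y_\Lambda',\omega_\Lambda')\big).
\]
Using items (2), (3) of Lemma~\ref{coordinates lemma} for the identifications $e_j^H\leftrightarrow e_j^M$ and $f_j^H\leftrightarrow f_j^M$ when $j\le d$, and the vanishing $d\pi f_j=0$ when $j>d$, I verify that each basis vector $e_j$ ($j\le d$) or $f_j$ ($1\le j\le n$) from Lemma~\ref{coordinates lemma} lifts diagonally into $\ker\tau$, recovering the kernel identification of Lemma~\ref{isolated at origin}. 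A short computation relying on items (8), (9) shows that each vector listed in \eqref{coker tau basis} fails to lie in $\im\tau$---the Hamilton-vector obstruction is essentially the same one that made the composition clean in Proposition~\ref{clean at origin}. A dimension count confirms $\dim\coker\tau=n+d-2$, completing the identification of both kernel and cokernel.

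Finally I would compute the change-of-basis determinants relating our chosen frames to the frames on $B$ and $C$ that make the exact-sequence isomorphism trivial (the basis of $B$ partitioned into the diagonal image of the basis of $A$ together with lifts of a basis of $\im\tau$, and the basis of $C$ partitioned into that lifted $\im\tau$-basis together with the cokernel basis \eqref{coker tau basis}). Each diagonal pairing contributes a unit Jacobian, thanks to the identifications in items (2), (3). The only nontrivial block is the $2\times 2$ one governing the extra $\partial_s,\partial_t$ directions in $\mathcal{C}$: by items (8), (9), once appropriate $\Lambda$-corrections are subtracted so that their images land outside the already-matched diagonal directions, $\tau(\partial_s)$ and $\tau(\partial_t)$ pair with the $e_d,e_n$ position directions of the fourth $T^*M$-slot via a $2\times 2$ matrix of determinant $\sqrt{1-c^2}$. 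Passing to $-\tfrac12$-densities multiplies the output by $|\sqrt{1-c^2}|^{-1/2}=(1-c^2)^{-1/4}$, and combining with the scalar symbol values produces the asserted value
\[
  (2\pi)^{-\codim H/2+1/2}\chi(c)^2(1-c^2)^{-1/4}.
\]
The main obstacle is the coordinated bookkeeping of the four bases and tracking the signs and orientations, which fortunately the Maslov-factor ambiguity subsumes; the determinant computation itself is elementary once the frames are aligned.
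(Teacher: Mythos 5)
Your proposal follows essentially the same route as the paper's proof: you run the density isomorphism of the exact sequence \eqref{long exact sequence} on the explicit frames of Lemma \ref{coordinates lemma}, read off the symbol values $(2\pi)^{1/2}\chi(c)^2$ and $(2\pi)^{-\codim H/2}$, and reduce everything to change-of-basis determinants, correctly locating the single non-unit factor $\sqrt{1-c^2}$ in the block where the $\partial_s,\partial_t$ images meet the $e_d,e_n$ directions of the last $T^*M$ slot, whence $(1-c^2)^{-1/4}$. The paper carries out the same computation, only with the row reduction written out in full (explicit bases for $T_{(a;b)}\mathcal{C}$, $T_b\Lambda$, the extension, and their images under $\tau$), so your sketch matches its proof in both structure and in the source of each factor.
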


The pairing of $\ker d\alpha$ and $\coker \tau$ hinted in (2) is realized as follows. By the arguments in \cite{DG75}, the image of $\ker d\alpha$ in $T_b T^*(H^2 \times M^2)$ via $T_p F_0 \to T_b T^*(H^2 \times M^2)$ is precisely the symplectic complement of $\im \tau$. The symplectic form then induces a well-defined nondegenerate bilinear mapping
\[
	\ker d\alpha \times \coker \tau \to \R.
\]

\begin{lemma} \label{canonically paired lemma}
	The pairing of $\ker d\alpha$ and $\coker \tau$ induces a linear isomorphism $|\coker \tau|^{1/2} \simeq |\ker d\alpha|^{-1/2}$ that, given a half-density on $\coker \tau$ which assigns the value $1$ to the basis in Lemma \ref{coker tau lemma}, gives a $-\frac12$-density on $\ker d\alpha$ which assigns the value $1$ to the basis
	\[
		e_1, \ldots, e_d, f_1, \ldots, f_{d-1}, f_{d+1},\ldots,f_{n-1}.
	\]
\end{lemma}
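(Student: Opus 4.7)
The plan is to verify the lemma by explicitly computing the pairing matrix in the bases described in Lemmas \ref{coordinates lemma} and \ref{coker tau lemma}, and then to invoke the standard duality between densities induced by a nondegenerate pairing.

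First I would identify $\ker d\alpha$ explicitly. Since $\alpha:F_0\to\mathcal C_{\mathcal O}\circ\Lambda$ sends $(x,\zeta)\mapsto(0,-p_H(x,\pi\zeta),0,-p_M(ix,\zeta))$, a vector $v\in T_pF_0$ lies in $\ker d\alpha$ if and only if $dp_H(v)=0$ and $dp_M(v)=0$. Reading off properties (4)--(7) of Lemma \ref{coordinates lemma}, the only basis vectors of $T_pF_0$ on which $dp_H$ or $dp_M$ is nonzero are $f_d$ and $f_n$; since $c<1$, the two conditions force the $f_d$- and $f_n$-components of $v$ to vanish. This gives exactly the basis $e_1,\dots,e_d,f_1,\dots,f_{d-1},f_{d+1},\dots,f_{n-1}$ claimed in the lemma and confirms $\dim\ker d\alpha=n+d-2=e$, matching the excess from Proposition \ref{clean at origin}.

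Next I would write down the embedding $T_pF_0\hookrightarrow T_bT^*(H^2\times M^2)$ coming from $F_0\to\Lambda$. From the parametrization of $\Lambda$ in Proposition \ref{delta symbolic data}, the vector associated with a variation $(x',\zeta')\in T_xH\oplus T_{ix}^*M$ is
\[
(x',\pi\zeta',x',-\pi\zeta',ix',-\zeta',ix',\zeta').
\]
Combining this with properties (2) and (3) of Lemma \ref{coordinates lemma}, I would pair the images of the basis vectors of $\ker d\alpha$ against the representatives of the $\coker\tau$ basis, using the symplectic form $\sigma=\sigma_{H,1}+\sigma_{H,2}+\sigma_{M,1}+\sigma_{M,2}$. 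Each $\coker\tau$ representative is supported in a single factor and in either the position or the momentum slot, so the bookkeeping collapses: one finds $\sigma(e_j,(0,0,f_k,0))=-\delta_{jk}$ for $j,k\le d$ from the third factor, $\sigma(f_j,(0,e_k,0,0))=-\delta_{jk}$ for $j,k<d$ from the second factor, and $\sigma(f_j,(0,0,0,e_k))=\delta_{jk}$ for $d<j,k<n$ from the fourth factor. All remaining entries vanish, either because the $\ker d\alpha$ vector has no component in the relevant factor (as for $f_j$ with $j>d$ in the $H$-factors, by property (3)) or because both entries lie in the same position or momentum slot of the same factor.

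The resulting pairing matrix is block diagonal with diagonal entries $\pm1$, so its determinant has absolute value $1$. To conclude I would apply the standard recipe: a nondegenerate bilinear form $B:V\times W\to\R$ with matrix $M_{ij}=B(v_i,w_j)$ in chosen bases induces an isomorphism $|W|^{1/2}\simeq|V|^{-1/2}$ under which the half-density on $W$ taking value $1$ on its basis corresponds to the $(-1/2)$-density on $V$ taking value $|\det M|^{-1/2}$ on its basis; with $|\det M|=1$ this produces exactly the density described in the lemma. The only real obstacle is the careful bookkeeping of signs introduced by the $-\pi\zeta'$ and $-\zeta'$ entries in the $\Lambda$-parametrization and by the conventions used for $\sigma$ across the four factors, but everything is determined by Lemmas \ref{coordinates lemma} and \ref{coker tau lemma}.
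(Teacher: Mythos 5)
Your proposal is correct and follows essentially the same route as the paper: push the claimed basis of $\ker d\alpha$ into $T_bT^*(H^2\times M^2)$ via the parametrization of $\Lambda$, pair against the representatives of the $\coker\tau$ basis to get a block-diagonal matrix with $|\det|=1$, and conclude by the standard density duality. Your preliminary verification that $e_1,\ldots,e_d,f_1,\ldots,f_{d-1},f_{d+1},\ldots,f_{n-1}$ really spans $\ker d\alpha$ (using properties (4)--(7) and $c<1$) is a worthwhile addition the paper leaves implicit, and the overall sign discrepancy with the paper's matrix $\operatorname{diag}(I,I,-I)$ is just a symplectic-form convention and is immaterial since only the absolute value of the determinant enters.
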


\begin{lemma} \label{short exact lemma} The short exact sequence \eqref{short exact sequence} and the isomorphisms of the previous two lemmas identify a unique element in $|\ker d\alpha| \otimes |T_a \mathcal C \circ \Lambda|^{1/2}$ which assigns 
the value 
\[
	(2\pi)^{-\codim H/2+1/2} (1 - c^2)^{-1/2}
\]
to the pair of bases
\[
	e_1,\ldots,e_d,f_1,\ldots,f_{d-1},f_{d+1},\ldots, f_{n-1} \qquad \text{ and } \qquad (h,0),(0,h)
\]
for $\ker d\alpha$ and $T_a \mathcal C \circ \Lambda$, respectively.
\end{lemma}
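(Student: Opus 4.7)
The plan is to carry out the scalar chase through the three-step construction described in the proof of Proposition \ref{comp symbolic data}. The short exact sequence \eqref{short exact sequence} determines a canonical linear isomorphism
\[
	\psi : |\ker d\alpha|^{1/2} \otimes |T_a(\mathcal C \circ \Lambda)|^{1/2} \to |T_p F_0|^{1/2}
\]
sending a product of half-densities $\mu \otimes \eta$ to the half-density on $T_p F_0$ whose value on any concatenated basis $\{v_i, \tilde w_j\}$ equals $\mu(\{v_i\}) \cdot \eta(\{w_j\})$, where the $\tilde w_j \in T_p F_0$ are arbitrary lifts of the $w_j$ (the value being independent of the choice of lift). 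The canonical element $\Omega_{SES} \in |\ker d\alpha|^{1/2} \otimes |T_p F_0|^{-1/2} \otimes |T_a(\mathcal C \circ \Lambda)|^{1/2}$ supplied by the sequence is then, up to sign, $\psi^{-1}$, viewed as a tensor via the duality between $|T_p F_0|^{1/2}$ and $|T_p F_0|^{-1/2}$. So the task reduces to computing $\psi^{-1}(\nu_+)$, where $\nu_+$ denotes the natural half-density assigning $1$ to the basis of $T_p F_0$ from Lemma \ref{coordinates lemma}.

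Next I would compute the linearization $d\alpha : T_p F_0 \to T_a(\mathcal C \circ \Lambda)$ directly. Since $\alpha$ sends $(x,\zeta) \in F_0$ to $(0, -p_H(x, \pi \zeta), 0, -p_M(ix, \zeta))$, items (4)--(7) of Lemma \ref{coordinates lemma} show that $d\alpha$ annihilates each $e_j$ and each $f_j$ with $j \notin \{d, n\}$, while
\[
	d\alpha(f_d) = (-1, -c), \qquad d\alpha(f_n) = (0, -\sqrt{1 - c^2})
\]
in the target basis $\{(h,0), (0,h)\}$. This confirms the claimed basis of $\ker d\alpha$, and inverting the $2 \times 2$ restriction of $d\alpha$ to $\operatorname{span}(f_d, f_n)$ produces explicit lifts $\tilde w_1 = -f_d + \tfrac{c}{\sqrt{1-c^2}} f_n$ and $\tilde w_2 = -\tfrac{1}{\sqrt{1-c^2}} f_n$ of $(h,0)$ and $(0,h)$. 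A short computation gives $\tilde w_1 \wedge \tilde w_2 = \tfrac{1}{\sqrt{1-c^2}} f_d \wedge f_n$, so after reordering, the wedge product of the concatenated basis $\{e_1,\ldots,e_d, f_1,\ldots,f_{d-1}, f_{d+1},\ldots,f_{n-1}, \tilde w_1, \tilde w_2\}$ equals $\pm (1-c^2)^{-1/2}$ times the wedge product of the basis of Lemma \ref{coordinates lemma}.

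Finally I would assemble the scalars. Because the change-of-basis determinant has absolute value $(1-c^2)^{-1/2}$, the natural half-density $\nu_+$ evaluates to $(1-c^2)^{-1/4}$ on the concatenated basis, so $\psi^{-1}(\nu_+) = (1-c^2)^{-1/4}\, \mu \otimes \eta$, where $\mu$ and $\eta$ are the natural half-densities on $\ker d\alpha$ and $T_a(\mathcal C \circ \Lambda)$. Consequently, as a tensor,
\[
	\Omega_{SES} = (1-c^2)^{-1/4}\, \mu \otimes \nu_- \otimes \eta,
\]
where $\nu_-$ is dual to $\nu_+$. Applying the composition $|T_p F_0|^{-1/2} \simeq |\coker \tau|^{-1/2} \simeq |\ker d\alpha|^{1/2}$ from Lemmas \ref{coker tau lemma} and \ref{canonically paired lemma} to the middle factor sends $\nu_- \mapsto (2\pi)^{-\codim H/2 + 1/2}(1-c^2)^{-1/4}\mu$ (the $\chi(c)^2$ factor being unity on the region where the symbol is ultimately evaluated), and produces the element in $|\ker d\alpha| \otimes |T_a(\mathcal C \circ \Lambda)|^{1/2}$ of value $(2\pi)^{-\codim H/2 + 1/2}(1-c^2)^{-1/2}$ on the prescribed bases. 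The main obstacle is bookkeeping: one must verify the correct direction of each isomorphism between $\pm\tfrac12$-densities — in particular, that $\Omega_{SES}$ is $\psi^{-1}$ rather than $\psi$, and that the duality between natural bases of $|V|^{1/2}$ and $|V|^{-1/2}$ preserves the scalar $1$ rather than inverting it — so that the two factors of $(1-c^2)^{-1/4}$ combine into the stated $(1-c^2)^{-1/2}$ rather than cancelling.
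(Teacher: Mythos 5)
Your proposal is correct and takes essentially the same route as the paper: trivialize via the short exact sequence \eqref{short exact sequence}, push the $-\tfrac12$-density on $T_pF_0$ through Lemmas \ref{coker tau lemma} and \ref{canonically paired lemma} to pick up $(2\pi)^{-\codim H/2+1/2}(1-c^2)^{-1/4}$, and then account for the Jacobian $\sqrt{1-c^2}$ relating $f_d,f_n$ to $(h,0),(0,h)$ — you do this with explicit lifts $\tilde w_1,\tilde w_2$ on the $T_pF_0$ side, the paper by pushing $f_d,f_n$ forward and changing basis on $T_a\mathcal C\circ\Lambda$, which is the same determinant. Incidentally, your values $d\alpha(f_d)=(-h,-ch)$ and $d\alpha(f_n)=(0,-\sqrt{1-c^2}h)$ match the pushforwards listed in the proof of Lemma \ref{coker tau lemma}, whereas the paper's proof of this lemma writes them as $(-h,0),(-ch,-\sqrt{1-c^2}h)$ (evidently a transcription slip); the determinant, and hence the final constant, is unaffected.
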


Finally, we integrate the object identified in Lemma \ref{short exact lemma} over the excess fiber $\alpha^{-1}(0,\sigma,0,\tau)$. We fix coordinates as we have above and integrate in the $\zeta$ variables first and $x$ variables second. We note that $(x_0,\zeta) \in \alpha^{-1}(0,\sigma,0,\tau)$ if and only if
\[
	|(\zeta_1,\ldots,\zeta_d)| = - \sigma \qquad \text{ and } \qquad |(\zeta_1,\ldots,\zeta_n)| = -\tau,
\]
which may be rewritten as the cartesian product of the sphere of radius $-\sigma$ in $\R^d$ and the sphere of radius $\sqrt{\tau^2 - \sigma^2}$ in $\R^{n-d}$. Integrating yields an object in $|T_{x_0} H| \otimes |T_a \mathcal C \circ \Lambda|^{1/2}$ which assigns the value
\[
	(2\pi)^{-\codim H/2+1/2} \vol(S^{d-1})\vol(S^{n-d-1}) (-\sigma)^{d-1} (-\tau)^{n-d-1} (1 - \sigma^2/\tau^2)^{\frac{n-d-2}{2}} 
\]
to the pair of bases $e_1,\ldots, e_d$ and $(h,0),(0,h)$. Since $e_1,\ldots,e_d$ are orthonormal at $x_0$ with respect to the Riemannian inner product, integration over $H$ yields the half-density
\begin{multline*}
	(2\pi)^{-\codim H/2+1/2} \vol(S^{d-1})\vol(S^{n-d-1}) \vol(H) \\
	 (-\sigma)^{d-1} (-\tau)^{n-d-1} (1 - \sigma^2/\tau^2)^{\frac{n-d-2}{2}} |d\sigma \, d\tau|^{1/2}.
\end{multline*}
Proposition \ref{comp symbolic data} follows by [DG, Theorem 5.4] or [HorIV, 25.2.3], which multiplies in an additional complex unit times $(2\pi)^{-e/2} = (2\pi)^{-(n+d-2)/2}$.
\end{proof}

All that remains is to prove Lemmas \ref{coker tau lemma}, \ref{canonically paired lemma}, and \ref{short exact lemma}.

\begin{proof}[Proof of Lemma \ref{coker tau lemma}]
The exact sequence \eqref{long exact sequence} gives us an identification
\[
	|\coker \tau|^{-1/2} \simeq |T_p F_0|^{-1/2} \otimes |T_{(a;b)} \mathcal C \times T_b \Lambda|^{1/2} \otimes |T_b T^*(H^2 \times M^2)|^{-1/2}.
\]
We will begin by fixing an element in $|T_p F_0|^{-1/2}$ which takes the basis (1) of Lemma \ref{coordinates lemma} to $1$ and, through a process of forward maps, basis extensions, and change of basis operations, we will obtain the desired valuation of an element in $|\coker \tau|^{-1/2}$ on the basis in the lemma.

{In what follows, we let $g,h$ constitute the standard symplectic basis of any tangent space to $T^*\R$.}
Recalling Propositions \ref{U symbolic data} and \ref{delta symbolic data} and appealing to Lemma \ref{coordinates lemma}, $T_{(a;b)} \mathcal C$ has a basis
\begin{align*}
		&(g,0;0,-e_d, 0,0)\\
		&(0,g;0,0,0,ce_d + \sqrt{1 - c^2}e_n + *) \\
		&(0,0;e_j,e_j,0,0) && j \leq d\\
		&(0,0;f_j,-f_j,0,0) && j < d\\
		&(-h,0;f_d,-f_d,0,0) \\
		&(0,0;0,0,e_j,e_j) && j \leq n \\
		&(0,0;0,0,-f_j,f_j) && j \neq d,n\\
		&(0,-ch;0,0,-f_d,f_d) \\
		&(0,-\sqrt{1 - c^2}h; 0,0,-f_n,f_n)
\end{align*}
where the $*$ is a linear combination of $f_{d+1},\ldots,f_n$, and $T_{b} \Lambda$ has a basis
\begin{align*}
		&(e_j,0,e_j,0) && j \leq d \\
		&(0,e_j,0,e_j) && j \leq d \\
		&(f_j,0,-f_j,0) && j \leq d \\
		&(0,f_j,0,-f_j) && j \leq d \\
		&(0,0,-f_j,0) && j > d \\
		&(0,0,0,-f_j) && j > d. \\
\end{align*}
Recall the maps $T_pF_0 \to T_{(a;b)}\mathcal C$ and $T_p F_0 \to T_b\Lambda$ are given by
\begin{align*}
	(x',\zeta') &\mapsto (0,-dp_H(x',\pi\zeta'),0,-dp_M(ix',\zeta); x', \pi\zeta', x', -\pi \zeta', ix', -\zeta', ix', \zeta')\\
	(x',\zeta') &\mapsto (x', \pi\zeta', x', -\pi \zeta', ix', -\zeta', ix', \zeta')
\end{align*}
Appealing to Lemma \ref{coordinates lemma}, the basis $e_1,\ldots,e_d,f_1,\ldots,f_n$ of $T_p F_0$ pushes forward through the map $T_b F_0 \to T_{(a;b)} C \times T_b \Lambda$ to
\begin{align*}
	&(0, 0; e_j, e_j, e_j, e_j) \times (e_j, e_j, e_j, e_j) && j \leq d \\
	&(0, 0; f_j, -f_j, -f_j, f_i) \times (f_j, -f_j, -f_j, f_i) && j < d \\
	&(-h, -ch; f_d, -f_d, -f_d, f_d) \times (f_d, -f_d, -f_d, f_d) \\
	&(0, 0; 0, 0, -f_j, f_j) \times (0, 0, -f_j, f_j) && d < j < n \\
	&(0, -\sqrt{1 - c^2}h; 0, 0, -f_n, f_n) \times (0, 0, -f_n, f_n)
\end{align*}
Our current goal is to complete this to a basis for the product $T_{(a;b)} \mathcal C \times T_b \Lambda$. The evaluation of the symbolic half-density on the product will determine a unique element of $|(T_{(a;b)} \mathcal C \times T_b \Lambda)/\ker \tau|^{1/2}$. We add in $d + n$ elements of the form $0 \times v$ where $v$ is a basis element of $T_b \Lambda$. We also add in $2 + 2d + 2n$ elements of the form $0 \times v$ where $v$ are the basis elements of $T_{(a;b)}C$. Specifically, we extend to a basis of the product by adding in elements
\begin{align}\label{basis extension 1}
	&0 \times (e_j, 0, e_j, 0) && j \leq d\\
	\nonumber &0 \times (0, e_j, 0, e_j) && j \leq d\\
	\nonumber &0 \times (f_j, 0, -f_j, 0) && j \leq d \\
	\nonumber &0 \times (0, -f_j, 0, f_j) && j \leq d \\
	\nonumber &0 \times (0, 0, -f_j, 0) && d < j \leq n \\
	\nonumber &0 \times (0, 0, 0, f_j) && d < j \leq n\\
	\nonumber &(g,0;0,-e_d, 0,0) \times 0\\
	\nonumber &(0,g;0,0,0,ce_d + \sqrt{1 - c^2}e_n + *) \times 0 \\
	\nonumber &(0, 0; 0, 0, e_j, e_j) \times 0 && j \leq n \\
	\nonumber &(0,0;0,0,-f_j,f_j) \times 0 && j < d\\
	\nonumber &(0,-ch;0,0,-f_d,f_d) \times 0
\end{align}
A linear transformation consisting entirely of determinant $\pm 1$ row operations and perhaps some permutations takes the extended basis to the product of the bases of $T_{(a;b)} C$ and $T_b \Lambda$. Appealing to the symbolic data in Lemmas \ref{U symbolic data} and \ref{delta symbolic data}, the half-density on the product $T_{(a;b)} C \times T_b \Lambda$ assigns the value
	\[
		(2\pi)^{-\codim H/2 + 1/2} \chi(c)^2
	\]
	to the product basis. Hence, the desired element of $|(T_{(a;b)} C \times T_b \Lambda)/\ker \tau|^{1/2}$ assigns the same value to the image of the list \eqref{basis extension 1} by the quotient map $T_{(a;b)} C \times T_b \Lambda \to (T_{(a;b)} C \times T_b \Lambda)/\ker \tau$.
	
	Next, we map \eqref{basis extension 1} forward to $T_bT^*(H^2 \times M^2)$ via $\tau$. In particular, we obtain
\begin{align*}
	&(-e_j, 0, -e_j, 0) && j \leq d \\
	&(0, -e_j, 0, -e_j) && j \leq d \\
	&(-f_j, 0, f_j, 0) && j \leq d \\
	&(0, f_j, 0, -f_j) && j \leq d \\
	&(0,0, f_j, 0) && d < j \leq n \\
	&(0,0,0,-f_j) && d < j \leq n \\
	&(0, -e_d, 0, 0) \\
	&(0,0,0, ce_d + \sqrt{1 - c^2} e_n + *) \\
	&(0,0, e_j, e_j) && j \leq n \\
	&(0,0, -f_j, f_j) && j < d \\
	&(0,0, -f_d, f_d).
\end{align*}
Recall the $*$ is a linear combination of $f_j$. We extend this by the $n + d - 2$ elements in \eqref{coker tau basis} and, after a sequence of determinant $\pm 1$ operations and a permutation of the basis elements, we obtain the basis
\begin{align*}
	&(e_j, 0, 0, 0) && j \leq d \\
	&(0,e_j,0,0) && j \leq d \\
	&(0,0, e_j, 0) && j \leq n \\
	&(0,0,0,e_j) && j < n \\
	&(0,0,0, \sqrt{1 - c^2} e_n) \\
	&(f_j, 0, 0, 0) && j \leq d \\
	&(0, f_j, 0, 0) && j \leq d \\
	&(0,0,f_j,0) && j \leq n \\
	&(0,0,0, f_j) && j \leq n,
\end{align*}
to which the symplectic $-\frac12$-density on $T_bT^*(H^2 \times M^2)$ assigns $(1 - c^2)^{-1/4}$. Finally, the desired element in $|\coker \tau|^{-1/2}$ assigns the product of these valuations,
\[
	(2\pi)^{-\codim H/2+1/2} \chi(c)^2 (1 - c^2)^{-1/4},
\]
to the image of the basis \eqref{coker tau basis} via the quotient $T_bT^*(H^2 \times M^2) \to \coker \tau$.
\end{proof}

\begin{proof}[Proof of Lemma \ref{canonically paired lemma}]
	The map $T_pF_0 \to T_bT^*(H^2 \times M^2)$ is
	\[
		(x',\zeta') \mapsto
		(x',d\pi \zeta',x',-d\pi \zeta',dix',-\zeta',dix',\zeta').
	\]
	Hence, the basis for $\ker d\alpha$ in the lemma maps forward to
	\begin{align}\label{canonically paired lemma basis}
		&(e_j,e_j,e_j,e_j) && j \leq d \\
		\nonumber &(f_j,-f_j,-f_j,f_j) && j < d \\
		\nonumber &(0,0,-f_j,f_j) && d < j < n.
	\end{align}
	The restriction of the symplectic form on $T_b T^*(H^2 \times M^2)$ to $\ker d\alpha \times \coker \tau$ has matrix
	\[
		\begin{bmatrix}
			I & 0 & 0 \\
			0 & I & 0 \\
			0 & 0 & -I
		\end{bmatrix}
	\]
	with respect to the products of the bases \eqref{canonically paired lemma basis} and \eqref{coker tau basis}. Since the absolute value of the determinant of this matrix is conveniently $1$, and since our $-\frac12$-density on $\coker \tau$ assigns $1$ to the basis in Lemma \ref{coker tau lemma}, the natural element in $|\ker d\alpha|^{1/2}$ assigns $1$ to the basis in the statement of the lemma.
\end{proof}

\begin{proof}[Proof of Lemma \ref{short exact lemma}]
The short exact sequence \eqref{short exact sequence} and the results of Lemmas \ref{coker tau lemma} and \ref{canonically paired lemma} induce a natural trivialization
\[
	1 \simeq |\ker d\alpha|^{1/2} \otimes |T_p F_0|^{-1/2} \otimes  |T_a \mathcal C \circ \Lambda|^{1/2} \simeq |\ker d\alpha| \otimes |T_a \mathcal C \circ \Lambda|^{1/2}.
\]
Take the element in $|T_p F_0|^{-1/2}$ which assigns the value $1$ to the basis in (1) of Lemma \ref{coordinates lemma}. Invoking both Lemma \ref{coker tau lemma} and Lemma \ref{canonically paired lemma}, the induced element in $|\ker d\alpha|^{1/2}$ assigns the value
\[
	(2\pi)^{-\codim H/2+1/2} \chi(c)^2 (1 - c^2)^{-1/4}
\]
to the basis in Lemma \ref{canonically paired lemma}. Note, the pushforward of $f_d,f_n$ through $d\alpha$ is
\[
	(-h,0),(-ch, -\sqrt{1 -c^2} h),
\]
and hence the object in $|\ker d\alpha| \otimes |T_a \mathcal C \circ \Lambda|^{1/2}$ assigns this same value to the pair of bases
\[
	e_1,\ldots,e_d,f_1,\ldots,f_{d-1},f_{d+1},\ldots, f_{n-1} \qquad \text{ and } \qquad (-h,0),(-ch, -\sqrt{1 -c^2} h).
\] 
Note, $(-h,0),(-ch,-\sqrt{1 - c^2} h)$ is the image of the basis $(h,0),(0,h)$ under a linear transformation of determinant $\sqrt{1 - c^2}$. Hence, our object in $|\ker d\alpha| \otimes |T_a \mathcal C \circ \Lambda|^{1/2}$ assigns the value
\[
	(2\pi)^{-\codim H/2+1/2} \chi(c)^2 (1 - c^2)^{-1/2}
\]
to the pair of bases
\[
	e_1,\ldots,e_d,f_1,\ldots,f_{d-1},f_{d+1},\ldots, f_{n-1} \qquad \text{ and } \qquad (h,0),(0,h).
\]
\end{proof}


\section{Proof of the Theorem \ref{main tauberian}} \label{TAUBERIAN}

Finally, we prove our Tauberian theorem. The idea at the core of the argument is based on that of Colin de Verdiere's version in \cite{CdV79} which obtains asymptotics of certain weighted counts of spectral measures in $\R^d$ over a homothetic family of regions with piecewise $C^1$ boundary. The main contribution here is to relate the remainder to the size of (a unit thickening of) the boundary of the region, rather than to a scaling parameter. This allows us to apply the Tauberian theorem to obtain estimates for sums over a joint spectrum in more exotic families of regions, as in Theorem \ref{main ladder}.

We introduce some notation. For a subset $\Omega \subset \R^n$ and $0 \leq a < b$, we define
\[
	\partial \Omega^{[a,b]} = \{x \in \overline{\Omega^c} : a \leq d(x,\Omega) \leq b \},
\]
and
\[
	\partial \Omega^{[-b,-a]} = \{x \in \overline{\Omega} : a \leq d(x,\Omega^c) \leq b \},
\]
and finally if $a < 0 < b$, we set
\[
	\partial \Omega^{[a,b]} =  \partial \Omega^{[a,0]} \cup \partial\Omega^{[0,b]} 
\]
As expected, $\partial \Omega^{[-1,1]}$ is the unit thickening of the boundary of $\Omega$.

To prove Theorem \ref{main tauberian}, we first record a couple helpful lemmas. The first is an observation about integration of order functions over thickenings of regions in $\R^n$. The second uses the first and allows us to control $N(\partial \Omega^{[0,r]})$ in terms of the integral of $m$ over $\partial \Omega^{[-1,1]}$.

\begin{lemma}\label{tauberian lemma 1}
	Let $m$ be an order function on $\R^n$, $\Omega$ a subset of $\R^n$, and $a,b$ real numbers for which $a < b$. Then, there exist a constant $C$ and an exponent $\nu$ depending only on $n$ and $m$ for which
	\[
		\int_{\partial\Omega^{[a,b]}} m(x) \, dx \leq C (1 + \max(|a|,|b|))^\nu \int_{\partial \Omega^{[-1,1]}} m(x) \, dx.
	\]
\end{lemma}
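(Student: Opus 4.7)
The plan is to prove the estimate by a grid covering argument, reducing everything to unit-scale neighborhoods of the boundary. Set $r = \max(|a|,|b|,1)$; if $r=1$ then $\partial\Omega^{[a,b]} \subset \partial\Omega^{[-1,1]}$ and the inequality is immediate, so I will assume $r > 1$.

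The geometric ingredient I would establish first is the containment $B(y,1) \subset \partial\Omega^{[-1,1]}$ for every $y \in \partial\Omega$. The argument is a direct unpacking of the definitions: if $x \in B(y,1) \cap \overline{\Omega^c}$, then $d(x,\Omega) \leq |x-y| + d(y,\Omega) \leq 1$, placing $x$ in $\partial\Omega^{[0,1]}$; the case $x \in B(y,1) \cap \overline{\Omega}$ is symmetric.

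Next I would cover $\R^n$ by the unit cubes $Q_k = k + [0,1)^n$, $k \in \Z^n$, and restrict attention to those $k$ for which $Q_k \cap \partial\Omega^{[a,b]} \neq \emptyset$. For each such $k$, pick a witness $x_k \in Q_k \cap \partial\Omega^{[a,b]}$ and then an approximate nearest boundary point $y_k \in \partial\Omega$, which satisfies $|x_k - y_k| \leq r$ and hence $|k - y_k| \leq r + \sqrt{n}$. Applying the order function inequality three times — $m(z) \leq C m(k)$ for $z \in Q_k$, $m(k) \leq C(1+r)^\nu m(y_k)$ from their distance, and $m(y_k) \leq C m(w)$ for $w \in B(y_k,1)$ — I obtain
\[
	\int_{Q_k} m(z)\,dz \;\leq\; C(1+r)^\nu \int_{B(y_k,1)} m(w)\,dw.
\]

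Summing over $k$ and invoking the first step to place each $B(y_k,1)$ inside $\partial\Omega^{[-1,1]}$, the closing move is a bounded-overlap count: each fixed $z \in \R^n$ lies in $B(y_k,1)$ for at most $C(1+r)^n$ choices of $k$, since $z \in B(y_k,1)$ forces $|k-z| \leq r + \sqrt{n} + 1$. Combining these estimates yields
\[
	\int_{\partial\Omega^{[a,b]}} m(x)\,dx \;\leq\; C(1+r)^{\nu+n} \int_{\partial\Omega^{[-1,1]}} m(x)\,dx,
\]
which is the claimed bound with exponent $\nu + n$. The step that I expect to require the most care is verifying the containment $B(y,1) \subset \partial\Omega^{[-1,1]}$, keeping the two halves $\partial\Omega^{[-1,0]}$ and $\partial\Omega^{[0,1]}$ correctly sorted; the rest is bookkeeping with the order function property and a standard multiplicity estimate.
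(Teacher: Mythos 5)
Your proof is correct and takes essentially the same route as the paper: discretize at unit scale, use the order-function property to transfer values over distances of size at most $\sim r=\max(|a|,|b|,1)$, and close with a multiplicity count of $O((1+r)^n)$, landing on the same exponent $\nu+n$. The only difference is cosmetic: you use lattice cubes and the containment $B(y,1)\subset\partial\Omega^{[-1,1]}$ for $y\in\partial\Omega$, where the paper uses maximal $1$-separated nets in $\partial\Omega^{[a,b]}$ and $\partial\Omega^{[-1/2,1/2]}$ with disjoint half-balls.
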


\begin{lemma}\label{tauberian lemma 2}
	We assume all hypotheses of Theorem \ref{main tauberian} except we do not require that $N(\Omega)$ be finite. Then for each $r \geq 1$,
	\[
		N(\partial \Omega^{[0,r]}) \leq C (1 + r)^\nu \int_{\partial \Omega^{[-1,1]}} m(x) \, dx
	\]
	for some constant $C$ and exponent $\nu$ which are independent of $r$ and $\Omega$.
\end{lemma}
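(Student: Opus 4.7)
The plan is to bound $N(\partial \Omega^{[0,r]})$ by trading it for the integral of $N * \rho$ over a small thickening, applying the hypothesis $N*\rho \le m$, and then invoking Lemma \ref{tauberian lemma 1} to reduce to the unit thickening on the right.

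First, since $\rho \ge 0$ is Schwartz and has total mass $1$, I would fix once and for all a radius $R \ge 1$, depending only on $\rho$, with $\int_{B_R} \rho(u)\,du \ge \tfrac12$, where $B_R$ denotes the closed ball of radius $R$ about the origin in $\R^n$. Set $A := \partial \Omega^{[0,r]}$ and $A' := A + B_R$. By Fubini--Tonelli (all integrands are nonnegative), together with the trivial inclusion $y + B_R \subset A'$ for each $y \in A$,
\[
	\tfrac12\, N(A) \le \int_A \int_{B_R} \rho(u)\,du\, dN(y) \le \int_{\R^n} \int_{A'} \rho(x - y)\,dx\, dN(y) = \int_{A'} (N*\rho)(x)\,dx \le \int_{A'} m(x)\,dx,
\]
where the last inequality uses $N*\rho \le m$.

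Second, the key geometric step is to verify $A' \subset \partial\Omega^{[-R,\, R+r]}$. For $y \in \partial\Omega^{[0,r]}$ and $|z| \le R$, the triangle inequality gives $d(y+z,\Omega) \le d(y,\Omega) + R \le r + R$; and if $y + z \in \overline{\Omega}$ then $d(y+z,\Omega^c) \le |z| \le R$ because $y \in \overline{\Omega^c}$. Hence either $y+z \in \partial\Omega^{[0,\, r+R]}$ or $y+z \in \partial\Omega^{[-R,0]}$, which together make up $\partial\Omega^{[-R,\, R+r]}$ by definition.

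Third, applying Lemma \ref{tauberian lemma 1} with $[a,b] = [-R, R+r]$ bounds $\int_{A'} m(x)\,dx$ by a constant multiple of $(1 + R + r)^{\nu_0} \int_{\partial\Omega^{[-1,1]}} m(x)\,dx$. Absorbing $R$ (which depends only on $\rho$) into the constants and combining with the first step yields the claim. The argument has no real obstacle; the one point that requires care is that the Schwartz function $\rho$ may not be compactly supported, so one cannot work with its support directly—this is precisely why the truncation to $B_R$ at the cost of a factor $2$ appears at the very beginning.
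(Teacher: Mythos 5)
Your proof is correct and is structurally the same as the paper's: both approaches bound $N(\partial\Omega^{[0,r]})$ by the integral of $m$ over a slightly enlarged thickening, and then invoke Lemma \ref{tauberian lemma 1} to shrink back to the unit thickening, producing the power of $(1+r)$. The one noteworthy difference is at the very first step. The paper's proof opens with ``Since $\rho(0)>0$, there exists $\delta$ and $c>0$ with $\rho\ge c$ on $B(0,\delta)$,'' which is not actually guaranteed by the stated hypotheses on $\rho$ (nonnegative, Schwartz, $\int\rho=1$); a $\rho$ vanishing at the origin satisfies those hypotheses. That assumption is harmless in the paper's applications (where $\rho$ is built as a square with positive value at $0$), but it is an unstated restriction. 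Your version—choosing $R$ with $\int_{B_R}\rho\ge\tfrac12$ and replacing the pointwise lower bound on $\rho$ near $0$ by the inclusion $y+B_R\subset A'$—uses only $\int\rho=1$ and $\rho\ge0$, and so is slightly more robust while buying the same conclusion. The remaining steps (the geometric containment $A'\subset\partial\Omega^{[-R,\,R+r]}$, the reduction via Lemma \ref{tauberian lemma 1}, and absorbing $R$ into constants) all check out.
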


We first prove Theorem \ref{main tauberian} and then the lemmas. Throughout the proofs, we use $a \lesssim b$ to mean that there exists a positive constant $C$ not depending on $\Omega$ for which
\[
	a \leq Cb.
\]
We also use $a \gtrsim b$ to denote $b \lesssim a$ and we use $a \approx b$ to mean $a \lesssim b$ and $a \gtrsim b$.

\begin{proof}[Proof of Theorem \ref{main tauberian}]
	We write
	\begin{align*}
		N(\Omega) - \rho * N(\Omega) &= \int_{\R^n} \chi_\Omega(y) \, dN(y) - 		\int_{\R^n} \int_{\R^n} \chi_\Omega(x) \rho(x - y) dN(y) \, dx \\
		&= \int_{\R^n} \int_{\R^n} (\chi_\Omega(y) - \chi_\Omega(x)) \rho(x - y) dN(y) \, dx.
	\end{align*}
	We note
	\[
		\chi_\Omega(x) - \chi_\Omega(y) = \begin{cases}
			1 & x \in \Omega \text{ and } y \in \Omega^c, \\
			-1 & x \in \Omega^c \text{ and } y \in \Omega,\\
			0 & \text{otherwise,}
		\end{cases}
	\]
	and hence
	\[
		|N(\Omega) - \rho * N(\Omega)| \leq \int_{\Omega} \int_{\Omega^c} \rho(x - y) \, dN(y) \, dx + \int_{\Omega^c} \int_{\Omega} \rho(x - y) \, dN(y) \, dx.
	\]
	We claim the first integral is bounded as
	\begin{equation}\label{tauberian claim}
		\int_{\Omega} \int_{\Omega^c} \rho(x - y) \, dN(y) \, dx \lesssim \int_{\partial \Omega^{[-1,1]}} m(x) \, dx.
	\end{equation}
	As we proceed, we will see that the claim holds similarly if we interchange $\Omega$ and $\Omega^c$, and hence the second integral satisfies the same bound. The theorem will follow.
	
	For $y \in \R^n$ and $r \geq 0$, we let
	\[
		A(y,r) = \{x \in \R^n : r \leq |x - y| \leq r+1\}
	\]
	denote the annulus centered at $y$ with inner radius $r$ and outer radius $r+1$. Since $\rho$ is Schwartz-class, we write
	\[
		\rho(x-y) \lesssim (1 + |x-y|)^{-K}
	\]
	for $K$ as large as we desire and $C$ which depends on $N$. Hence,
	\begin{align*}
		\int_{\Omega} \int_{\Omega^c} \rho(x - y) \, dN(y) \, dx &\leq \sum_{k = 0}^\infty  \int_{\partial \Omega^{[0,k+1]}} \int_{\Omega \cap A(y,k)} \rho(x - y) \, dx \, dN(y) \\
		&\lesssim \sum_{k = 0}^\infty (1 + k)^{-K} \int_{\partial \Omega^{[0,k+1]}} |\Omega \cap A(y,k)| \, dN(y) \\
		&\lesssim \sum_{k = 0}^\infty (1 + k)^{-K+n-1} N(\partial\Omega^{[0,k+1]})\\
		&\lesssim \sum_{k = 0}^\infty (1 + k)^{-K + n + \nu - 1} \int_{\partial \Omega^{[-1,1]}} m(x) \, dx
	\end{align*}
	where the last line follows from the second lemma. \eqref{tauberian claim} follows after taking $K > n + \nu$ and summing the convergent series.
	\end{proof}
	
	\begin{proof}[Proof of Lemma \ref{tauberian lemma 1}]	
	Let $P$ denote a maximal $1$-separated subset of $\partial \Omega^{[a,b]}$. Note
	\[
		m(x) \approx m(y) \qquad \text{ for } |x - y| \leq 1
	\]
	and, since the balls $B(x,1)$ of radius $1$ centered at points in $P$ cover $\partial \Omega^{[a,b]}$, we have
	\[
		\int_{\partial \Omega^{[a,b]}} m(x) \, dx \leq \sum_{x \in P} \int_{B(x,1)} m(y) \, dy \lesssim \sum_{x \in P} m(x).
	\]
	Now let $Q$ be a maximal $1$-separated subset of $\partial \Omega^{[-1/2,1/2]}$. Note that the balls $B(x,1/2)$ for $X \in Q$ are disjoint and lie entirely in $\partial \Omega^{[-1,1]}$. Hence,
	\[
		\int_{\partial \Omega^{[-1,1]}} m(x) \, dx \geq \sum_{x \in Q} \int_{B(x,1/2)} m(y) \, dy \gtrsim \sum_{x \in Q} m(x).
	\]
	Hence, it suffices to show
	\[
		\sum_{x \in P} m(x) \lesssim (1 + \max(|a|,|b|))^\nu \sum_{y \in Q} m(y).
	\]
	For shorthand let $r = 1 + \max(|a|,|b|)$. For each $x \in P$, let $y(x)$ denote some choice of $y \in Q$ for which $|x - y(x)| \leq r$. Such $y(x)$ must always exist. If $B(x,r) \cap Q$ were empty, then we would be able to place a point in $B(x,r-1) \cap \partial \Omega^{[-1/2,1/2]}$ which then would be $1$-separated from all other points in $Q$.
	
	Suppose $\nu'$ is the exponent in the bound on the order function $m$. That is,
	\[
		m(x) \lesssim (1 + |x-y|)^{\nu'}m(y).
	\]
	We have
	\begin{align*}
		\sum_{x \in P} m(x) &\lesssim r^{\nu'} \sum_{x \in P} m(y(x)) \\
		&= r^{\nu'} \sum_{y \in Q} \# \{ x : y(x) = y\} m(y) \\
		&\lesssim r^{\nu'+n} \sum_{y \in Q} m(y),
	\end{align*}
	where the last line follows from
	\[
		\# \{ x : y(x) = y\} \leq \# P \cap B(y,r) \lesssim r^n
	\]
	for each $y \in Q$. This completes the proof of the lemma.
\end{proof}
	
	\begin{proof}[Proof of Lemma \ref{tauberian lemma 2}]
	Since $\rho(0) > 0$, there exists $\delta \in (0,1]$ and a positive constant $c$ for which
	\[
		\rho(x) \geq c \qquad \text{ for } |x| \leq \delta.
	\]
	Hence,
	\begin{equation}\label{tauberian lemma 2 eq 1}
		\int_{B(x,\delta)} dN(y) \leq \frac{1}{c} \rho * N(x) \leq \frac1c m(x) \qquad \text{ for all } x \in \R^n.
	\end{equation}
	We let $\tilde N$ denote the restriction of $N$ to $\partial \Omega^{[0,r]}$ and have by Fubini's theorem,
	\begin{align*}
		N(\partial \Omega^{[0,r]}) &= \int_{\R^n} d \tilde N(y) \\
		&= \int_{\R^n}  \int_{B(y,\delta)} \frac{1}{|B(y,\delta)|} \, dx \, d\tilde N(y)\\
		&= \int_{\R^n} \int_{B(x,\delta)} \frac{1}{|B(x,\delta)|} d\tilde N(y) \, dx.
	\end{align*}
	Note, the inner integral is supported for $x$ in a $\delta$-thickening of the support of $\tilde N$, e.g. $\partial \Omega^{[-1,r+1]}$. This and $\tilde N \leq N$ yield
	\begin{align*}
		N(\partial \Omega^{[0,r]}) &\leq \int_{\partial \Omega^{[-1,r+1]}} \frac{1}{|B(x,\delta)|} \int_{B(x,\delta)} dN(y) \, dx\\
		&\leq \frac{1}{c|B(0,\delta)|} \int_{\partial \Omega^{[-1,r+1]}} m(x) \, dx,
	\end{align*}
	where the second line follows from \eqref{tauberian lemma 2 eq 1}. The proof is completed by Lemma \ref{tauberian lemma 1}.
\end{proof}


\end{document}